\newcommand{\C}{\mathcal{C}}
\newcommand{\Dm}{\mathcal{C}_m}
\newcommand{\Dpm}{\Dm'}
\newcommand{\Dmc}{\mathcal{C}_m^\circlearrowright}
\newcommand{\ipo}{{i+1\;(\mathrm{mod}\;m)}}
\newcommand{\Dim}{\mathsf{D}_m}
\newcommand{\deldm}{\delta^{\Dm}}
\newcommand{\deldpm}{\delta^{\Dpm}}
\newcommand{\delddm}{\delta^{\Dm,\Dpm}}
\renewcommand{\P}{\bigstar}
\newcommand{\Set}{{\mathsf{Set}}}
\newcommand{\Top}{{\mathsf{Top}}}
\newcommand{\Ch}{{\widehat{\C}}}
\newcommand{\E}{\mathcal{E}}
\newcommand{\G}{\mathcal{G}}
\DeclareMathOperator{\Hom}{Hom}
\newtheorem{theorem}{Theorem}[section]
\newtheorem{lemma}[theorem]{Lemma}
\newtheorem{cor}[theorem]{Corollary}
\newtheorem{prop}[theorem]{Proposition}
\theoremstyle{definition}
\newtheorem{defi}[theorem]{Definition}
\theoremstyle{remark}
\newtheorem{ex}[theorem]{Example}
\newtheorem{remark}[theorem]{Remark}
\title[Categorical Tiling Theory]{Categorical Tiling Theory: Constructing Directed Planar Tilings via Edge Reversal}
\author{Catherine DiLeo \and Preston Sessoms \and Brandon T. Shapiro}
\begin{document}

\maketitle

\begin{abstract}
    Tilings of the plane resemble the simplicial and other complexes from algebraic topology, but have not been studied from this perspective. We construct finite categories corresponding to polygons with labeled directed edges, and introduce the problem of modeling tilings of the Euclidean or hyperbolic plane as presheaves over such a category. Combinatorially, this amounts to choosing an ``alignment'' for a tiling: a direction for every edge and consistent labels for the edges of each polygonal tile. We show that for a fixed tiling, given a single alignment we can characterize every other alignment of the same tiling by comparison of the edge directions. We then construct a ``reflective'' alignment for any tiling with an even number of polygons at each vertex, and from this generate a large family of alignments with elegant symmetry properties.
\end{abstract}

\tableofcontents

\section*{Introduction}

In the classical theory of regular tilings, there are exactly three ways to divide the Euclidean plane into congruent regular polygons which meet at common edges and vertices: 6 triangles meeting at each point, 4 squares at each point, or 3 hexagons at each point. These three tilings, denoted by their Schl\"afli symbols $\{3,6\}$, $\{4,4\}$, and $\{6,3\}$, are pictured in Figure~\ref{introeuclideantilings}, 
\begin{figure}[h]
    \includegraphics[width=2.5cm]{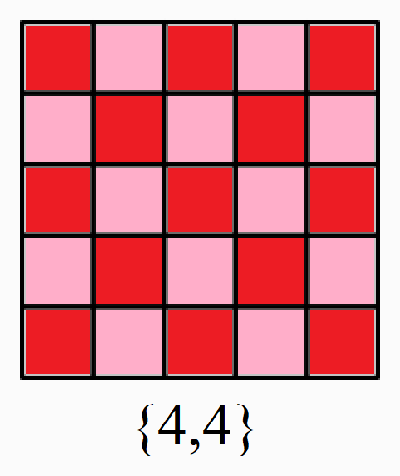}
    \qquad\quad
    \includegraphics[width=3cm]{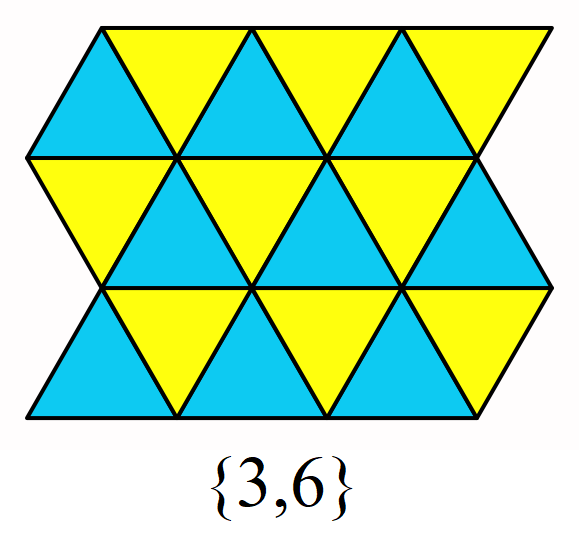}
    \qquad\quad
    \includegraphics[width=3cm]{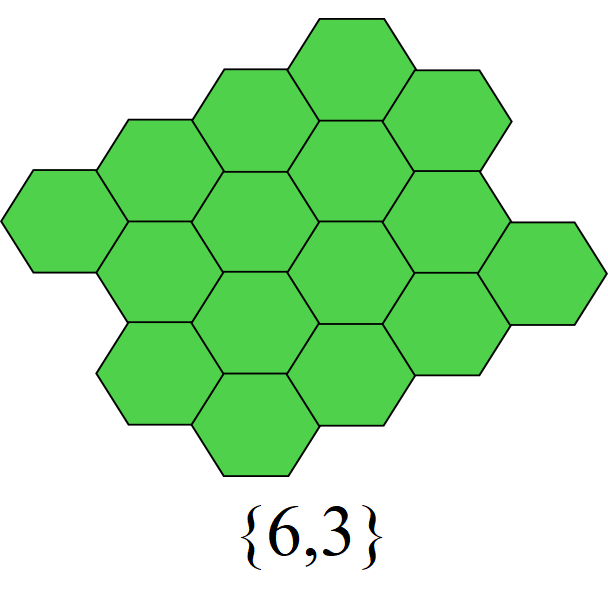}
    \caption{The 3 regular tilings of the Euclidean plane.}
    \label{introeuclideantilings}
\end{figure}
where $\{m,n\}$ denotes an arrangement of $n$ different $m$-sided polygons ($m,n \ge 3$). The symbols $\{3,3\}$, $\{4,4\}$, $\{3,4\}$, $\{5,3\}$, and $\{3,5\}$ describe the 5 regular polyhedra, which can be thought of as tilings of the sphere, while any symbol $\{m,n\}$ with higher numbers than these 8 describes a tiling of the hyperbolic plane such as those in Figure~\ref{introhyperbolictilings}.

\begin{figure}[h]
\qquad\qquad
    \includegraphics[width=2.8cm]{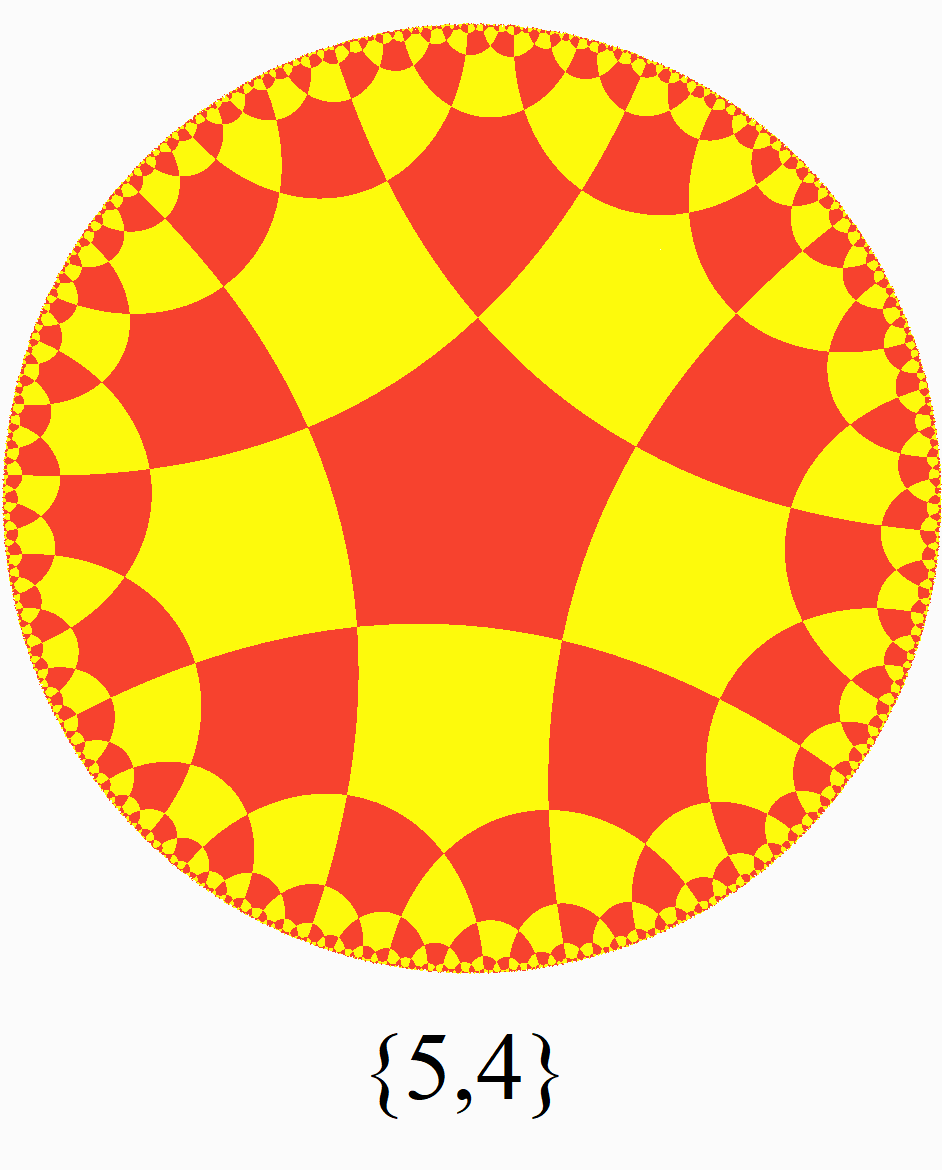}
    \qquad\quad
    \includegraphics[width=2.8cm]{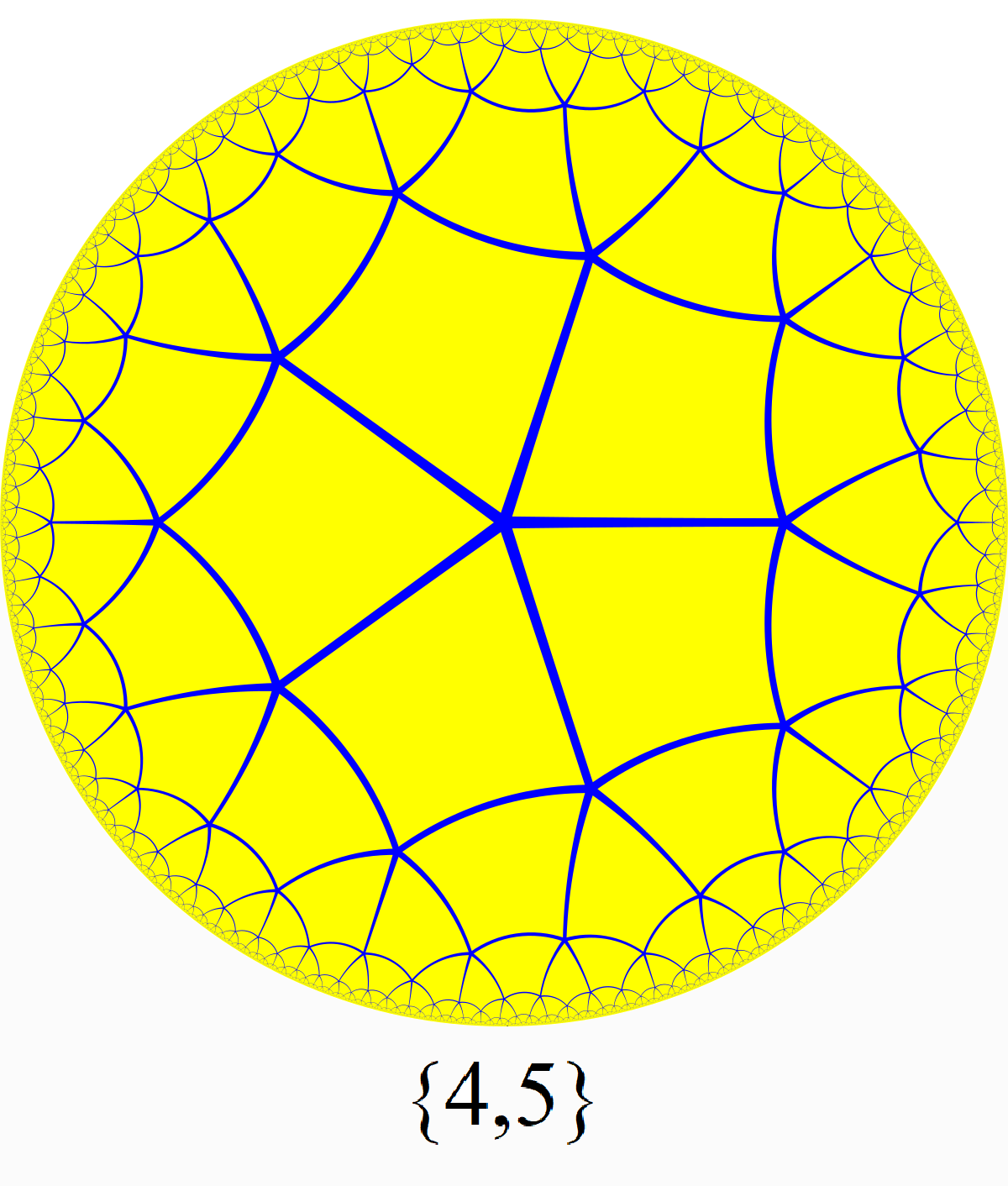}
    \qquad\quad
    \includegraphics[width=2.8cm]{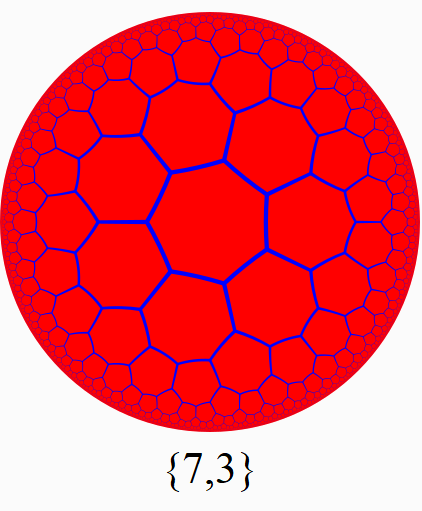}
    \caption{Some regular tilings of the hyperbolic plane, pictured in the Poincare disk model \cite{tiling54,tiling45,tiling73}}
    \label{introhyperbolictilings}
\end{figure}

Covering a space such as the plane or sphere with fixed polygonal shapes in this manner is common not only in classical geometry but also in topology, where arrangements of polygons and higher dimensional shapes are used to model spaces. A triangulation of a space, typically a manifold, is a construction of the space out of vertices, edges, triangles, and higher dimensional simplices as needed. The combinatorial data of how these shapes are connected contains all of the relevant topological information but is easier to analyze than a topological space. Spaces can similarly be constructed out of other shapes such as squares and cubes.

In algebraic topology, these combinatorial models of spaces are particularly convenient for computing (co)homology (for a basic example, see \cite[Example 3.7]{hatcher}) and studying homotopy theory. 
While there are many formal models for combinatorial spaces, those based on the notion of \emph{presheaves} from category theory are most commonly used for their elegant and convenient properties. This includes simplicial sets, semisimplicial sets (sometimes called $\Delta$-complexes), and cubical sets, among others.

A presheaf model of a space is based on a collection of shapes defined by their \emph{faces}, which are labeled pieces of the shape's boundary given by other shapes in the collection. In the 2-dimensional simplicial case, the shapes are the vertex, edge, and triangle, with faces labeled as in Figure~\ref{fig:simplexcat}.

\begin{figure}
    \centering
    \includegraphics[width=0.6\linewidth]{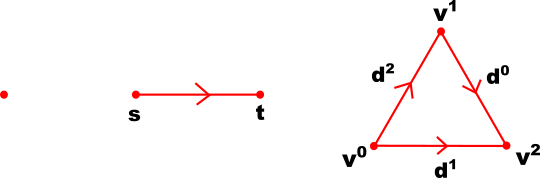}
    \caption{The vertex, edge, and triangle with labeled faces. The arrow on the edge always points from the $s$ vertex (for ``source'') to the $t$ vertex (for ``target''), as a substitute for including the labels as in the triangle.}
    \label{fig:simplexcat}
\end{figure}


A presheaf $X$ on these shapes consists of sets $X_c$ of cells of each shape $c$ in the collection, where whenever the shape $c$ has a face $f$ of shape $c'$, there is a function $X_c \to X_{c'}$ sending each $c$-shaped cell in the set $X_c$ to its $f$-labeled face, a $c'$-shaped cell in the set $X_{c'}$. This information can be packaged as a category $\C$ with shapes $c$ as objects and a morphism $f \colon c' \to c$ for each $c'$-shaped face of the shape $c$, where a presheaf is a contravariant functor $X \colon \C^{op} \to \Set$ and a map of presheaves is a natural transformation.

A presheaf models a space when its cells, realized as the topological shapes they represent and glued together along shared faces, assemble to form that space. In Figure~\ref{fig:intropresheaf} for instance, a presheaf with vertices $\{a,b,c,d,e\}$, edges $\{\alpha,\beta,\gamma,\delta,\epsilon,\theta,\omega\}$, and triangles $\{A,B\}$ models a space built by gluing together two triangles and an edge ($\omega$) along vertices, homotopy equivalent to a circle. The relationships between these shapes in the associated space can be expressed by equations of the form $v_1(A)=v_0(B)=b$ in the presheaf.

\begin{figure}
    \centering
    \includegraphics[width=0.4\linewidth]{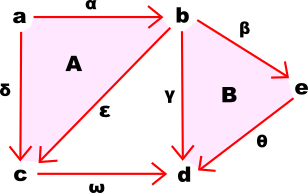}
    \caption{A picture of a presheaf with 5 vertices, 7 edges, and 2 triangles.}
    \label{fig:intropresheaf}
\end{figure}

While planar tilings clearly fit the pattern of spaces built out of fixed cell shapes, they have not been studied as presheaves. This may be because to do so requires the edges of the polygons and the endpoints of the edges to be labeled. Whereas the vertices and edges of a tiling form an undirected graph, a presheaf model of a tiling would have sets of vertices and edges which form a directed graph as each edge has a specified source and target. Choosing directions for these edges constitutes a massive amount of new information, and the options are limited by the requirement that each polygon fits the pattern of edge directions specified by the category of cell shapes. 

With the goal of bridging the gap between the geometric/combinatorial theory of tilings and the categorical techniques of algebraic topology, we introduce definitions, examples, and basic results on these presheaf models of regular planar tilings, which we call \emph{directed tilings}. We begin by analyzing the \emph{directed tiles}, namely $m$-gons with fixed patterns of edge directions such as the directed pentagons in Figure~\ref{fig:intropentagons}, by means of their corresponding \emph{$m$-gon categories}. 

\begin{figure}[h]
    \centering
    \includegraphics[scale=.4]{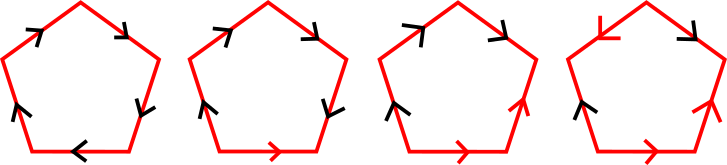}
    \caption{Four different directed pentagon tiles.}
    \label{fig:intropentagons}
\end{figure}

We then define directed tilings and proceed to analyze the possible compatible choices of edge directions and labels for a fixed $\{m,n\}$ tiling, data we call an \emph{alignment}. In particular, in \cref{newtiling} we show that given a single directed tiling $T$ every possible alignment of the same underlying tiling is determined by a map of presheaves from $T$ to a presheaf $\Lambda$ satisfying certain conditions. This map represents a choice for each edge in the tiling of whether to preserve or reverse its direction from the alignment of $T$.

Finally, we construct a large family of examples of directed tilings with $4k$ tiles around each vertex whose alignments look the same within a neighborhood of every tile. This construction uses the process of finding each tile in the plane by a sequence of reflections from a chosen ``base'' tile, where each reflection modifies the edge directions based on the label of the edge a tile is reflected across. In \cref{thm:generatetiling} we show that this process is sufficient to specify an alignment of an entire tiling, and in \cref{automorphismphi} we show that for these \emph{reflection generated} directed tilings any transformation of the plane generated by reflections across edges in the tiling modifies the edge directions of every tile in exactly the same way. As such, reflection generated directed tilings such as those in Figure~\ref{fig:introdirectedtilings} appear highly symmetric.

\begin{figure}[h]
    \begin{minipage}{.5\textwidth}
        \centering
        \includegraphics[width=.7\textwidth]{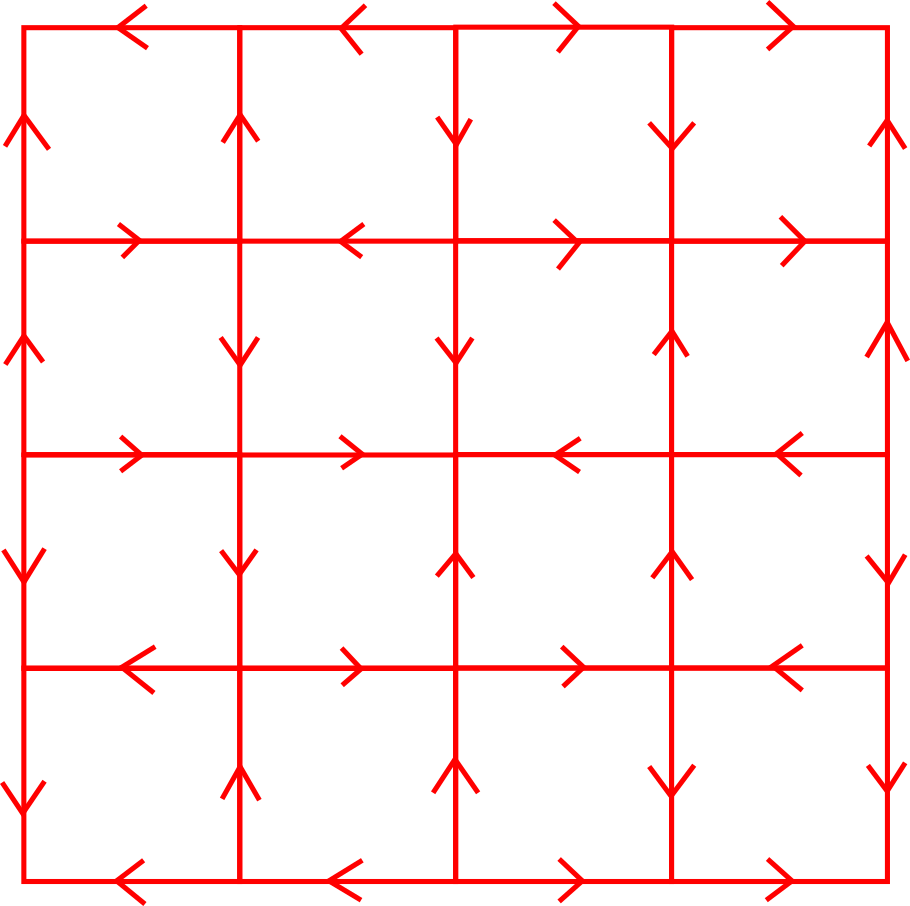}
    \end{minipage}%
    \begin{minipage}{.5\textwidth}
        \centering
        \includegraphics[width=.7\textwidth]{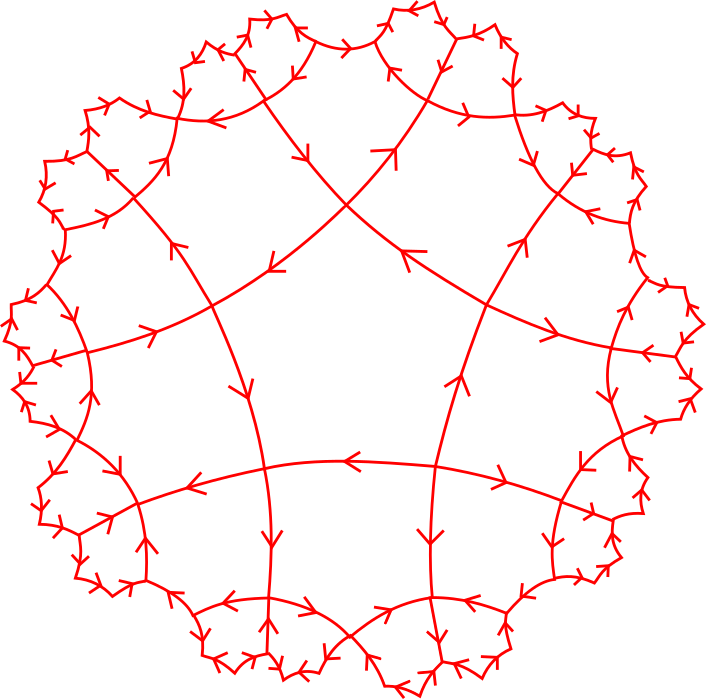}
    \end{minipage}%
    \caption{Two reflection generated directed tilings.}
    \label{fig:introdirectedtilings}
\end{figure}

\subsection*{Organization}

In Section 1 we review background on tilings, Coxeter groups, and presheaves. In Section 2 we introduce $m$-gon categories to represent directed tiles, count the number of $m$-gon categories up to isomorphism, and define directed tilings as certain presheaves on an $m$-gon category. In Section 3 we characterize all alignments for any undirected tiling which admits an alignment. In Section 4 we construct reflective and reflection generated tilings, and in Section 5 we prove symmetry properties of these tilings.

\subsection*{Acknowledgements}

This work was partially supported by NSF grant DMS-1839968, and originated in the 2024 UVa Topology REU. We would like to thank Paul Bianco and Ansel Huffman for working closely with us during the early stages of this project, along with Valentina Zapata Castro and David Chasteen-Boyd for answering countless questions. We also thank Thomas Koberda and Slava Krushkal for invaluable organizational support during the REU, as well as Julia Bergner, Sarah Blackwell, Beth Branman, Mark Pengitore, Ryan Stees, Sara Maloni, and Ben Spitz for helpful conversations.

\section{Background}

\subsection{Tilings}


Tilings of the Euclidean or hyperbolic plane are longstanding objects of study in mathematics, and we summarize some relevant aspects of this theory. More comprehensive references can be found in \cite[Section 6.7]{coxeterpolytopes}, \cite[Section 11.7]{coxetergeometry}, and \cite[Chapters 5-6]{conwaysymmetries}, among others.

\begin{defi}
    An $\{m,n\}$ tiling of a topological space for $m,n \ge 3$ is a decomposition of that space into $m$-gons (embedded closed disks called \emph{tiles}) which intersect at edges and/or vertices with $n$ tiles sharing each vertex. 

    Alternatively, one can build an $\{m,n\}$ tiling with an appropriate gluing construction, beginning with an infinite disjoint union of $m$-gons, and gluing along appropriate edges. This is a more topological approach, while the latter is more geometric. 
    
    A geometric $\{m,n\}$ tiling of a metric space has the additional property that the edges are all the same length and the angles of each $m$-gon are $\frac{2\pi}{n}$.
\end{defi}

For each pair $\{m,n\}$, there is a canonical geometric $\{m,n\}$ tiling of either the sphere ($\{3,3\},\{3,4\},\{3,5\},\{4,3\},\{5,3\}$), the Euclidean plane ($\{3,6\},\{4,4\},\{6,3\}$), or the hyperbolic plane (all other pairs). 

 \begin{figure}[h]
    \includegraphics[width=4cm]{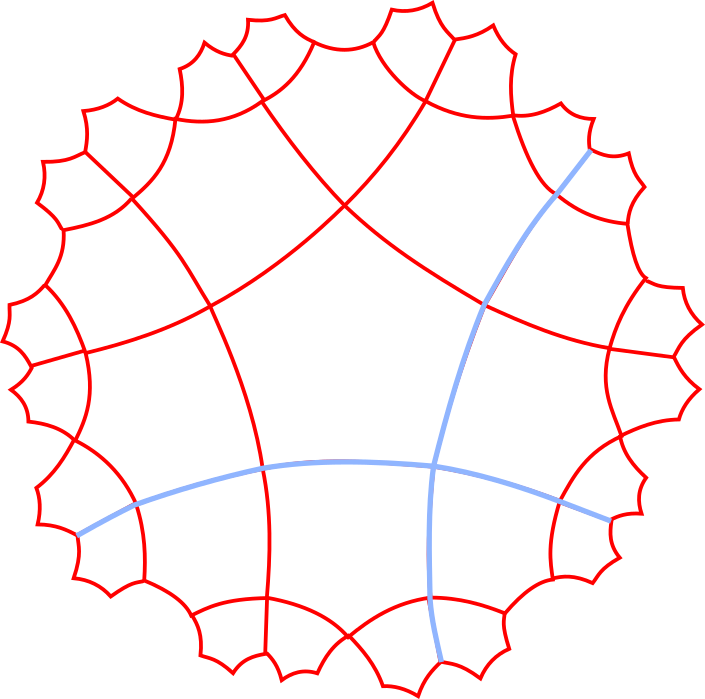}
    \qquad\qquad\qquad
    \includegraphics[width=4cm]{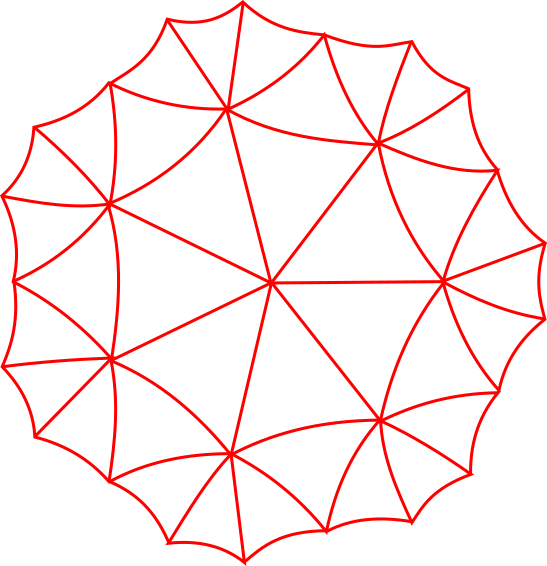}
    \caption{Portions of the $\{5,4\}$ and $\{3,7\}$ hyperbolic tilings, with geodesics shown in blue.}
    \label{hyperbolictilings}
    \end{figure}

Informally, a geodesic is a shortest path between two points. In the Poincare disc model of the hyperbolic plane, geodesics are arcs meeting the boundary at infinity at a right angle such as in Figure~\ref{hyperbolictilings}. 
In the Euclidean case, we inherit the regular metric on $\Bbb{R}^2$ and see that geodesics are straight lines. 
In $\{4,4\}$ (the square grid of Figure~\ref{euclideantilings}), for example, we see that some geodesics line up with paths of edges in the tiling. This is not the case in $\{6,3\}$ (honeycomb tiling), for example, where there is no straight line consisting of only edges of the tiling. This property depends on the parity of $n$. In tilings where $n$ is even, we can give a more combinatorial definition of geodesics that coincide with edges in the tiling.

\begin{figure}[h]
    \includegraphics[width=4cm]{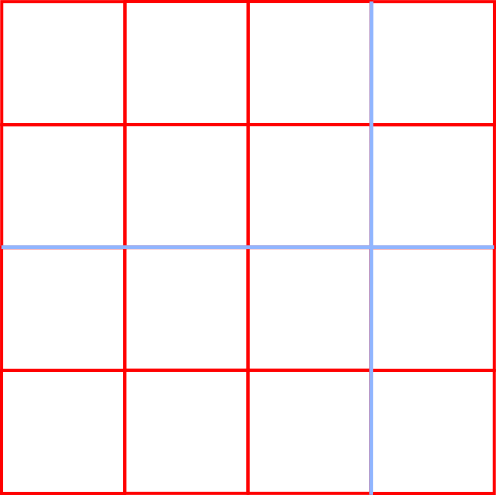}
    \qquad\qquad\qquad
    \includegraphics[width=4cm]{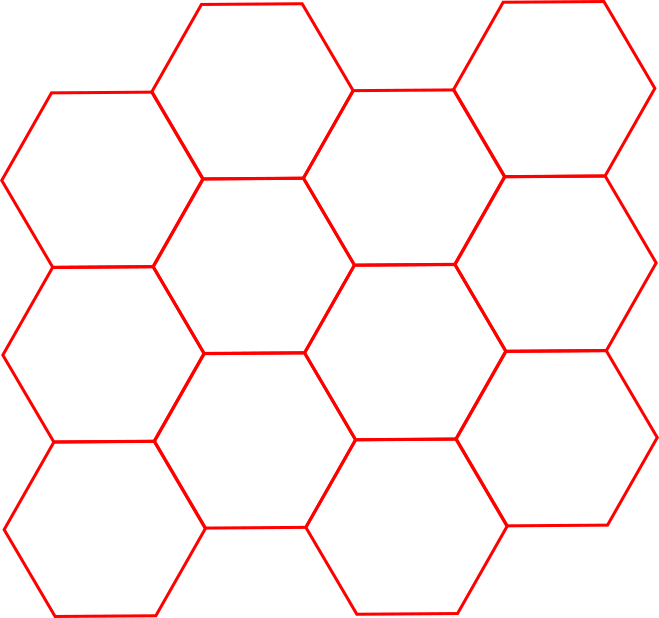}
    \caption{Portions of the $\{4,4\}$ and $\{6,3\}$ Euclidean tilings, with geodesics shown in blue.}
    \label{euclideantilings}
\end{figure}

\begin{defi}
    In an $\{m,n\}$ tiling where $n$ is even, a geodesic of the tiling is a sequence of vertices $v_i$ and edges $e_i$ where $i$ ranges over the integers, such that the edge $e_i$ goes between the vertices $v_{i-1}$ and $v_i$ and the edges $e_i$ and $e_{i+1}$ are opposite edges at the vertex $v_i$.
\end{defi}

In a geometric tiling, the geodesics of the tiling will also be geodesics in the ambient space as opposite edges at a vertex form a straight line in any 2-dimensional geometry. A reflection across a geodesic is an isometry of the Euclidean/hyperbolic plane. Reflecting across a geodesic of a tiling additionally sends vertices to vertices, edges to edges, and tiles to tiles, fixing only the vertices and edges on the geodesic.

In this paper, we are interested only in the combinatorial properties of a tiling rather than its metric structure, 
however we mention the geometry to establish an intuitive idea of how to visualize these tilings and their geodesics, and how they will be portrayed in figures.

\begin{defi}
    Two tiles are called \emph{adjacent} if they share a common edge. A sequence of adjacent tiles is called a \emph{track}. 
\end{defi}

\subsection{Coxeter Groups}

Coxeter groups are a large class of groups studied in the fields of algebra, geometry, and combinatorics, which include familiar examples such as the symmetry groups $\mathsf{S}_m$ and the dihedral groups $\Dim$. What is special about Coxeter groups is that they can be realized as a group generated by reflection transformations of some geometric object. 
The motivating example of Coxeter groups for our work arises from regular tilings of the Euclidean and hyperbolic plane.

\begin{defi}
    Let $S$ be a set, then a matrix $M: S \times S \to \{1,2,\cdots \infty\}$ is a Coxeter matrix provided
    \begin{itemize}
        \item $M(s,s') = M(s',s)$
        \item $M(s,s') = 1 \text{ if and only if } s=s'$
    \end{itemize}
\end{defi}

A Coxeter matrix can also be interpreted as an undirected graph with vertices the elements of $S$ and an edge between $s,s'$ if $M(s,s') \geq 3$. If $M(s,s') = 3$ this edge is unlabeled, otherwise, the edge is labeled with $M(s,s')$. Note that $M(s,s')$ is never 1 for distinct $s,s'$ by the second axiom, so if there is no edge between distinct $s,s'$ it means $M(s,s')=2$. This information provides the data required to generate a Coxeter group.

\begin{defi}
    A Coxeter group $W$ is a group generated by a set $S$ with generating relations of the form $(ss')^{M(s,s')} = e$ for a Coxeter matrix $M$. We say $(W,S)$ is a Coxeter system and $S$ contains the Coxeter generators.  
\end{defi}

Note that for generators $s,s'$ with no edge between them in the ``Coxeter graph'' corresponding to $M$, $s,s'$ commute in $W$, and the label on an edge between $s,s'$ denotes essentially ``how non-commutative'' they are in terms of the order of their product $ss'$.

Coxeter groups can be used to define vertex and face transitive tilings as an action on Euclidean and Hyperbolic space, $\mathbb{E}^2$ and $\mathbb{H}^2$. 

\begin{prop}[See for instance {\cite[Proposition 2.3.23]{notesoncoxetergps}}]\label{mncoxeter}
    Let $m \geq 3$ and $2n > \frac{2m}{m-2}$. The Coxeter group $W_{m,n}$ with presentation graph an $m$–cycle and all edges labeled $n/2$ acts on $\mathbb{H}^2$ as a geometric reflection group with strict fundamental domain a regular hyperbolic $m$–gon with interior angles $\frac{\pi}{n}$.
\end{prop}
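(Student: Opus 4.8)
The plan is to reduce the statement to the classical theory of geometric reflection groups --- specifically to Poincar\'e's polygon theorem --- so that the only work specific to us is producing the fundamental polygon and verifying that the hypothesis on $m$ and $n$ is exactly the one needed. Concretely: build a regular hyperbolic $m$-gon $P$, let $\Gamma$ be the group generated by the reflections across its sides, observe that the obvious relations give a surjection $W_{m,n}\twoheadrightarrow\Gamma$, and then quote the polygon theorem to conclude that this surjection is an isomorphism and that $P$ is a strict fundamental domain.

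First I would construct $P$. Fix $x\in\mathbb{H}^2$, and for $r>0$ let $P_r$ be the regular hyperbolic $m$-gon centered at $x$ with circumradius $r$; write $\alpha(r)$ for its common interior angle. Then $\alpha$ is continuous and strictly decreasing, with $\alpha(r)\to\frac{(m-2)\pi}{m}$ as $r\to 0$ (a small polygon is nearly Euclidean) and $\alpha(r)\to 0$ as $r\to\infty$ (the ideal $m$-gon). The hypothesis $2n>\frac{2m}{m-2}$ rearranges to $n(m-2)>m$, equivalently $\frac{\pi}{n}<\frac{(m-2)\pi}{m}$, so by the intermediate value theorem there is a unique $r$ with $\alpha(r)=\frac{\pi}{n}$; set $P=P_r$. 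Since the edge label $n/2$ is an integer, $n$ is even and $\frac{\pi}{n}$ --- or, under the convention in which the dihedral angle corresponding to the label $n/2$ is $\frac{\pi}{n/2}=\frac{2\pi}{n}$, that angle --- is a submultiple of $\pi$; this is precisely what is needed below, and the argument is insensitive to the choice of convention, since in either case the hypothesis places the relevant angle in the open interval $\bigl(0,\frac{(m-2)\pi}{m}\bigr)$.

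Next I would set up $\Gamma$. Index the sides of $P$ cyclically and let $r_1,\dots,r_m\in\mathrm{Isom}(\mathbb{H}^2)$ be the reflections across them. Because $P$ is bounded, non-adjacent sides are disjoint, which is consistent with reading the $m$-cycle presentation graph in the standard reflection-group way (non-adjacent generators carry no relation). Each $r_i$ is an involution, and two cyclically adjacent sides meet at a vertex of $P$ at its interior angle, so the product of the corresponding reflections is a rotation of finite order equal to the common edge label of $W_{m,n}$. Hence the $r_i$ satisfy the defining relations of $W_{m,n}$, and there is a surjective homomorphism $W_{m,n}\twoheadrightarrow\Gamma:=\langle r_1,\dots,r_m\rangle$.

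Finally I would invoke Poincar\'e's polygon theorem: $P$ is a compact convex hyperbolic polygon whose dihedral angles are all submultiples of $\pi$, so $\Gamma$ is discrete, acts on $\mathbb{H}^2$ with $P$ as a strict fundamental domain, and the surjection $W_{m,n}\twoheadrightarrow\Gamma$ is an isomorphism (equivalently, $\Gamma$ has no relations beyond those forced by the $r_i$ being involutions and by the angles of $P$). The main obstacle is exactly this last step --- establishing discreteness and the fundamental-domain property from the angle condition --- which is the substantive geometric content and is precisely what the cited \cite[Proposition 2.3.23]{notesoncoxetergps} (or any standard treatment of hyperbolic reflection groups, via the Poincar\'e theorem or via the action on the Tits cone) supplies. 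The portions genuinely belonging to this paper are the angle bookkeeping showing the hypothesis is sharp and the routine check that the side reflections obey the $m$-cycle Coxeter relations.
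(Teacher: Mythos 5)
The paper does not actually prove this proposition: it is stated as background and attributed directly to \cite[Proposition 2.3.23]{notesoncoxetergps}, so there is no in-paper argument to compare yours against. Your outline is the standard proof that such a reference supplies --- construct the regular hyperbolic $m$-gon with the prescribed interior angle by a continuity/intermediate-value argument on the circumradius, check that the side reflections satisfy the Coxeter relations so that $W_{m,n}$ surjects onto the reflection group, and invoke Poincar\'e's polygon theorem for discreteness, the strict fundamental domain, and injectivity --- and it is correct modulo the polygon theorem, which you rightly identify as the substantive geometric input rather than something to reprove here. One small caution: your remark that the argument is insensitive to whether the relevant vertex angle is $\frac{\pi}{n}$ or $\frac{2\pi}{n}$ is not accurate at the level of the hypothesis, since $n(m-2)>m$ guarantees $\frac{\pi}{n}<\frac{(m-2)\pi}{m}$ but not $\frac{2\pi}{n}<\frac{(m-2)\pi}{m}$; the existence of the polygon, and hence the sharpness of the stated inequality, depends on reading the interior angle as $\frac{\pi}{n}$ exactly as written (the apparent mismatch with the edge label $n/2$ is an issue with the statement's conventions, not with your argument).
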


This result extends to the three regular Euclidean tilings and the five regular spherical tilings, with analogous Coxeter structure for lower $m,n$. We can think of the Coxeter generators as the set of $m$ lines in hyperbolic space extending the edges of a regular $m$-gon with the given interior angles. 
In particular, an $\{m,n\}$ tiling can thus be thought of as a Coxeter system $(W_{m,n},S_m)$ where $S_m = \{s_1,...,s_m\}$, with an $m\times m$ Coxeter matrix $M$ defined by:
\[M_{i,j} =
\begin{cases}
    1 &\text{if } i = j\\
    2 &\text{if } |i -j| > 1\\
    \frac{n}{2} &\text{if } |i - j| = 1
\end{cases} \]

\subsection{Presheaves and Realizations}

In algebraic topology, spaces which are ``built by gluing together pieces of fixed shapes,'' much like planar tilings or more general triangulated manifolds, are modeled as presheaves on a category. The objects of this category correspond to the shapes of the decomposition and whose morphisms correspond to the relationships those shapes can have (typically one shape being a face of another, but also one shape projecting onto another or mapping isomorphically to itself). Most notably, (semi-)simplicial sets model spaces of arbitrarily high dimension by building them out of $k$-dimensional simplices for $k$ ranging over the natural numbers. Spaces are also often built as cubical sets, out of $k$-dimensional cubes rather than simplices. In each case, there is a \emph{geometric realization} functor from the ``combinatorial'' spaces of this form to the category of topological spaces.

All of the material in this section is standard background (see for instance \cite[Section III.7]{maclanemoerdijk}), but we include it for the reader's convenience and to help build the visual intuition we rely on later in the paper.

\begin{defi}
    A \emph{presheaf} on a category $\C$ is a functor $X \colon \C^{op} \to \Set$ from the opposite category of $\C$ to the category of sets and functions, where the set $X(c)$ is denoted $X_c$ for $c$ an object of $\C$. A \emph{map} or \emph{morphism} of presheaves is a natural transformation $X \to Y$, consisting of functions $X_c \to Y_c$ for each object $c$ in $\C$ which commute with the images of the morphisms of $\C$. The category of presheaves on $\C$ and morphisms of presheaves is denoted $\Ch$.
\end{defi}

\begin{ex}
    Consider the category $\G$ with two objects $0,1$ and two morphisms $s,t \colon 0 \to 1$ in addition to identities. A presheaf on $\G$ consists of two sets $X_0,X_1$ along with two functions $X_1 \to X_0$, which we also denote by $s,t$ for convenience. This is precisely the data of a directed graph, where $X_1$ is the set of edges, $X_0$ the set of vertices, and each edge $e \in X_1$ has $s(e)$ as its source and $t(e)$ as its target.
\end{ex}

One of the main ideas of this paper is that when a topological space is decomposed into pieces with fixed shapes in a suitably nice way, it can be ``modeled'' by a presheaf on a category whose objects correspond to those same shapes. To make this notion precise, we introduce the notion of \emph{geometric realization}, where a presheaf is used to construct a space or more generally an object in a category with colimits. To say a space is ``modeled'' by a presheaf is then to show that the space is isomorphic to the geometric realization of that presheaf.

In order to describe how an object in a category is "constructed" according to the data of a presheaf, we first describe how the presheaf itself can be constructed as a colimit in the same manner. This requires introducing both the pieces the presheaf will be constructed out of and the diagram of how those pieces fit together.

\begin{defi}
    For each object $c$ in $\C$, there is a \emph{representable} presheaf $y(c)$ given by $y(c)_{c'} = \Hom_\C(c',c)$ with the action on morphisms of $\C$ given by precomposition. The \emph{Yoneda embedding} is the functor $y \colon \C \to \Ch$ sending $c$ to $y(c)$ and acting on morphisms by postcomposition.
\end{defi}

\begin{ex}
    The representable graph $y(0)$ contains a single vertex and no edges, as there is only the identity morphism $0 \to 0$ and no morphisms $1 \to 0$. The other representable graph $y(1)$ contains two vertices (corresponding to the morphisms $s,t colon 0 \to 1$) and one edge between them (corresponding to the identity morphism $1 \to 1$).
\end{ex}

\begin{defi}
    Given a presheaf $X$ on $\C$, its \emph{category of elements} $\smallint X$ has as objects the elements $x \in X_c$ for any $c$, a morphism $X_f(x) \to x$ for each morphism $f \colon c' \to c$ in $\C$, and identities and composites corresponding to those in $\C$.
\end{defi}

\begin{figure}
    \centering
    \includegraphics[width=0.4\linewidth]{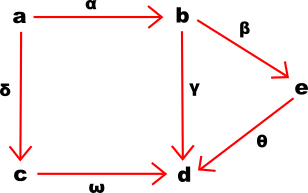}
    \caption{A picture of a graph with 5 vertices and 6 edges.}
    \label{fig:graph}
\end{figure}

\begin{ex}
    For the graph pictured in Figure~\ref{fig:graph}, its category of elements has objects $\{a,b,c,d,e,\alpha,\beta,\gamma,\delta,\omega\}$ and morphisms of the form $a \to \alpha \leftarrow b$ to each edge from its source and target vertices.
\end{ex}

The category of elements $\smallint X$ is perhaps most famous for its use in constructing the presheaf $X$ as a colimit of representables.

\begin{theorem}
    A presheaf $X$ on $\C$ is isomorphic to the colimit of the functor $\smallint X \to \C \xrightarrow{y} \Ch$ sending an object $x \in X_c$ to the representable presheaf $y(c)$.
\end{theorem}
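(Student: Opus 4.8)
The plan is to verify the universal property of the colimit directly, using the Yoneda lemma to translate maps out of representable presheaves into elements of a presheaf. Write $D \colon \smallint X \to \Ch$ for the composite functor in the statement, so $D(x) = y(c)$ for $x \in X_c$, and $D$ sends a morphism $X_f(x) \to x$ of $\smallint X$ (coming from $f \colon c' \to c$ in $\C$) to $y(f) \colon y(c') \to y(c)$, which is a morphism $D(X_f(x)) \to D(x)$ as required. I want to produce a cocone under $D$ with vertex $X$ and show it is initial among all such cocones.

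First I would fix the cocone maps. By the Yoneda lemma, for any presheaf $Y$ the set $\Hom_{\Ch}(y(c), Y)$ is naturally isomorphic to $Y_c$; in particular each element $x \in X_c$ corresponds to a unique map $\lambda_x \colon y(c) \to X$, namely the one picking out $x$ in the component $y(c)_c = \Hom_\C(c,c)$ at the identity morphism. I would then check the $\lambda_x$ form a cocone: for $f \colon c' \to c$ and $x \in X_c$ the triangle $\lambda_{X_f(x)} = \lambda_x \circ y(f)$ commutes, which is immediate from the Yoneda correspondence since both composites are the maps $y(c') \to X$ classifying the element $X_f(x) \in X_{c'}$.

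Next I would establish universality. Colimits in $\Ch$ exist (computed objectwise in $\Set$), so it suffices to show that for every presheaf $Y$, cocones under $D$ with vertex $Y$ are in natural bijection with $\Hom_{\Ch}(X, Y)$. Given a cocone $(\mu_x \colon y(c) \to Y)_x$, the Yoneda lemma turns each $\mu_x$ into an element $\overline{\mu}_x \in Y_c$, hence into a family of functions $\phi_c \colon X_c \to Y_c$, $x \mapsto \overline{\mu}_x$. The compatibility condition $\mu_{X_f(x)} = \mu_x \circ y(f)$, transported across the natural Yoneda isomorphism, says exactly that $\phi_{c'}(X_f(x)) = Y_f(\phi_c(x))$ for every $f \colon c' \to c$, i.e. that $\phi = (\phi_c)$ is a natural transformation $X \to Y$; and by the uniqueness in the Yoneda correspondence $\phi$ is the unique morphism with $\phi \circ \lambda_x = \mu_x$ for all $x$. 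Conversely, any $\phi \colon X \to Y$ yields such a cocone via $\mu_x = \phi \circ \lambda_x$, and these constructions are mutually inverse. This is precisely the defining property of the colimit, so $X \cong \operatorname{colim} D$.

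The only substantive care needed is bookkeeping the variance of $\smallint X$ and of the Yoneda embedding, so that the cocone condition genuinely unwinds to the naturality square of $\phi$ rather than to something ill-typed; once the indexing is set up correctly the argument is a direct translation across the Yoneda isomorphism. Alternatively, one could compute the colimit pointwise, presenting $(\operatorname{colim} D)_{c'}$ as the quotient of $\coprod_{x \in X_c}\Hom_\C(c',c)$ by the equivalence relation generated by the morphisms of $\smallint X$, and exhibiting the bijection with $X_{c'}$ sending the class of $(x, g \colon c' \to c)$ to $X_g(x)$. In that route surjectivity is witnessed by $g$ an identity, and the main obstacle is checking the map is well-defined and injective on the quotient — which is exactly where the morphisms of $\smallint X$, and the functoriality $X_{g}\circ X_f = X_{f \circ g}$, get used.
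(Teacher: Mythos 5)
Your argument is correct and complete. The paper itself offers no proof of this theorem: it is stated as standard background (the density, or co-Yoneda, theorem) with a pointer to Mac Lane--Moerdijk, so there is no in-paper proof to compare against. Your route is the standard one found in that reference: exhibit the cocone $\lambda_x \colon y(c) \to X$ classified by $x$ via the Yoneda lemma, and check that precomposition with this cocone gives a bijection between morphisms $X \to Y$ and cocones under the diagram with vertex $Y$, the cocone compatibility condition unwinding precisely to naturality of the induced family $\phi_c \colon X_c \to Y_c$. The one point worth noting is that you do not actually need to invoke the prior existence of colimits in $\Ch$; verifying the universal property of $(X, (\lambda_x))$ directly, as you in fact do, already establishes it as the colimit. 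Your alternative pointwise description of $(\operatorname{colim} D)_{c'}$ as a quotient of $\coprod_{x \in X_c} \Hom_\C(c',c)$ is also sound and is closer in spirit to the gluing description the paper gives in its example following the theorem statement.
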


\begin{ex}
    To construct a graph $X$ as a colimit in this manner, the functor $\smallint X \to \G \xrightarrow{y} \widehat{G}$ sends each vertex to the representable vertex $y(0)$ and each edge to the representable edge $y(1)$, with the morphisms corresponding to sources (resp. targets) sent to the source (resp. target) inclusion $y(0) \to y(1)$ of the vertex as the source (resp. target) of the edge. Thus the colimit can be constructed by beginning with the disjoint union of an edge for every edge in $X$ and a vertex for every vertex in $X$, then identifying each independent vertex with the source of every independent edge it is the source for in $X$, and likewise for targets. For the graph in Figure~\ref{fig:graph}, a portion of this colimit would look like identifying the source of $\beta$, the target of $\alpha$, and the vertex $b$ in the disjoint union picutred below.
    $$\bullet \xrightarrow{\quad\alpha\quad} \bullet \qquad\qquad \bullet_b \qquad\qquad \bullet \xrightarrow{\quad\beta\quad} \bullet$$
\end{ex}

As each presheaf is constructed as a colimit of representable presheaves which arise from only the objects in the category $\C$, we can construct objects of other categories based on a presheaf $X$ by specifying how the construction acts on each object of $\C$.

\begin{defi}
    For $\E$ a category with colimits and a functor $F\C \to \E$, the \emph{geometric realization} functor $|-|_F \colon \Ch \to \E$ sends a presheaf $X$ to the colimit of the functor $\smallint X \to \C \xrightarrow{F} \E$ sending $x \in X_c$ to $F(c)$. When the functor $F$ is clear from context we will write simply $|X|$ for the geometric realization of a presheaf $X$.
\end{defi}

This construction is often used to build a topological space out of a presheaf, where the objects in a category of cell shapes are sent by the functor $F$ to topological spaces given by the envisioned shapes.

\begin{ex}
    There is a functor $\G \to \Top$ sending $0$ to the one-point space and $1$ to the line segment, with $s,t$ the inclusions of a point as either endpoint of the line segment. The corresponding geometric realization functor sends a graph $X$ to the space with points corresponding to the vertices in $X_0$ connected by line segments corresponding to the edges in $X_1$. For instance, the graph in Figure~\ref{fig:graph} geometrically realizes to a circle with a line through it: the circle is made up of line segments corresponding to the edges $\alpha,\beta,\theta,\omega,\delta$ while the edge $\gamma$ induces an additional line segment from the point $b$ to the point $d$.
\end{ex}

\section{Tilings as 2-Dimensional Presheaves}

\subsection{Categories for Directed $m$-gons}

    In order to describe regular tilings as presheaves, we first define categories whose objects correspond to the vertex, edge, and $m$-gon shapes that make up a tiling.

    \begin{defi}
        An \emph{$m$-gon category}, which we denote by $\Dm$, is a category with objects and non-identity morphisms of the form
        \[
        \Dm \coloneq 
        \begin{tikzcd}[ampersand replacement=\&]
            0 \arrow[rr, bend right = 100, "v^1"] \arrow[rr, bend right = 100, draw = none, shift right = 3.5ex, "\vdots"] \arrow[rr, bend right = 100, shift right = 4ex, "v^m"'] \arrow[r, shift left = .75ex, "s"] \arrow[r, shift right = .75ex,"t"'] 
            \& 1 \arrow[r, shift left = 1.5ex, "d^1"] \arrow[r, draw = none, shift right = 2ex, "\vdots"] \arrow[r, shift right = 2.5ex, "d^{m}"'] 
            \& 2
        \end{tikzcd}
        \]
        such that $\{d^i \circ s, d^i \circ t\} = \{v_i,v_\ipo\}$ for $i=1,\ldots,m$.
    \end{defi}
        
    The objects of a category $\Dm$ represent a vertex, edge, and face respectively. The morphism $s$ corresponds to the source vertex of the edge and $t$ corresponds to the target vertex. The morphisms $d^i$ correspond to the $m$ edges of an $m$-gon, and the morphisms $v^i$ to the $m$ vertices of the $m$-gon. Therefore a presheaf $X$ on a category $\Dm$ consists of a set $X_0$ of vertices, a set $X_1$ of edges each equipped with a source and target vertex, and a set $X_2$ of $m$-gons equipped with $m$ edges whose source and target vertices agree in the manner expected of an $m$-gon. This is precisely the type of presheaf suitable to model $m$-gon tilings.

    There are $2^m$ different $m$-gon categories, as while the objects and morphisms are fixed their composition relations are not: the composite $d^i \circ s$ can be either $v^i$ or $v^\ipo$, where the other is $d^i \circ t$. This choice determines the ``direction'' of the edge $d^i$ in the $m$-gon, where if $d^i \circ s = v^i$ the edge points ``clockwise'' from $v^i$ to $v^\ipo$ and otherwise it points ``counterclockwise'' from $v^\ipo$ to $v^i$. 

    \begin{ex}\label{twopentagons}
        We write $y(2)$ for the representable functor $\Dm(-,2)$, whose components are as below.
        \[
        \begin{tikzcd}[ampersand replacement=\&]
            \{v^1,\ldots,v^m\}  \&
            \{d^1,\ldots,d^m\} \arrow[l, shift left = .75ex] \arrow[l, shift right = .75ex] \&
            * \arrow[l, draw=none, shift right = 3ex, "\vdots"]  \arrow[l, shift right = 2.5ex] \arrow[l, shift left = 2.5ex]
        \end{tikzcd}
        \]
    
    Below are pictures of $y(2)$ for two distinct pentagon categories, where the arrow on the $d^i$ edge points from the $d^i \circ s$ vertex to the $d^i \circ t$ vertex. We call an $m$-gon with this pattern of directed edges an \emph{$\Dm$-tile}.

    \begin{figure}[h]
        \centering
        \includegraphics[scale=.55]{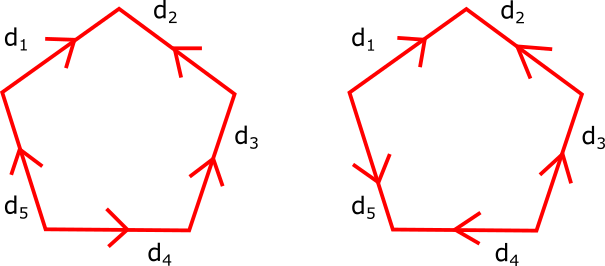}
        \caption{Visualizing the representable $y(2)$ for two pentagon categories}  
        \label{directedlabelledpentagons}
    \end{figure}

    The left pentagon category has composition defined as
    \[
    \begin{array}{rcccl}
        d^5 \circ t & = & d^1 \circ s & = & v^1 \\
        d^1 \circ t & = & d^2 \circ t & = & v^2 \\
        d^2 \circ s & = & d^3 \circ t & = & v^3 \\
        d^3 \circ s & = & d^4 \circ t & = & v^4 \\
        d^4 \circ s & = & d^5 \circ s & = & v^5, 
    \end{array}
    \]
    while in the right pentagon category it is defined as
    \[
    \begin{array}{rcccl}
         d^5 \circ s & = & d^1 \circ s & = & v^1 \\
         d^1 \circ t & = & d^2 \circ t & = & v^2 \\
         d^2 \circ s & = & d^3 \circ t & = & v^3 \\
         d^3 \circ t & = & d^4 \circ t & = & v^4 \\
         d^4 \circ s & = & d^5 \circ s & = & v^5.
    \end{array}     
    \]
    \end{ex}

    The simplest example of an $m$-gon category is the one in which every edge points in the clockwise direction.

    \begin{defi}
        The \emph{cyclic} $m$-gon category is given by
        \[
        \Dmc \coloneq 
        \begin{tikzcd}[ampersand replacement=\&]
            0 \arrow[rr, bend right = 100, "v^1"] \arrow[rr, bend right = 100, draw = none, shift right = 3.5ex, "\vdots"] \arrow[rr, bend right = 100, shift right = 4ex, "v^m"'] \arrow[r, shift left = .75ex, "s"] \arrow[r, shift right = .75ex,"t"'] 
            \& 1 \arrow[r, shift left = 1.5ex, "d^1"] \arrow[r, draw = none, shift right = 2ex, "\vdots"] \arrow[r, shift right = 2.5ex, "d^{m}"'] 
            \& 2
        \end{tikzcd}
        \]
        where $d^i \circ s = v^i$ and $d^i \circ t = v^\ipo$.

        \begin{figure}[h]
        \includegraphics[scale=.6]{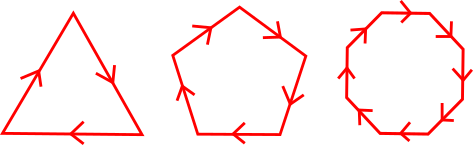}
        \caption{Visualizations of a triangle, pentagon, and octagon cyclic category.}
        \label{fig:cyclicexampletiling}
        \end{figure}
    
    \end{defi}

    As there are exactly $2^m$ distinct $m$-gon categories, we can index them by whether their edges point clockwise or counterclockwise.

    \begin{defi}\label{deltacm}
        Given an $m$-gon category $\Dm$, define the tuple
        \[
        \deldm \coloneq (\deldm_1,\ldots,\deldm_m) \in \{1,-1\}^m
        \]
        by letting $\deldm_i$ be 1 if $d^i \circ s = v^i$ in $\Dm$ and $-1$ otherwise where $d^i \circ t = v^i$. 
    \end{defi}

    In other words, $\deldm_i$ is 1 if the $d^i$ edge points clockwise and $-1$ if it points counterclockwise. 


    \begin{ex}
            \begin{figure}[h]
            \centering
            \includegraphics[scale=.5]{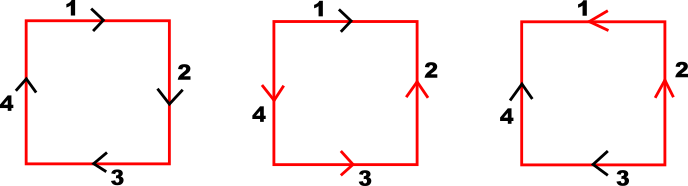}
            \caption{Three squares with (from left to right) edge reversal codes of $(1,1,1,1), (1,-1,-1,-1), (-1,-1,1,1)$. The black arrows are $1$s, and the red arrows are $-1$s, i.e. which arrows change from the cyclic category.}
            \label{edgereversalcodesquares}
            \end{figure}
    
        $\delta^{\Dmc} = (1,\ldots,1)$, while the tuples for the left and right pentagon categories in \cref{twopentagons} are respectively $(1,-1,-1,-1,1)$ and $(1,-1,-1,1,-1)$.
    \end{ex}

    We can also think of the tuple $\deldm$ as instructions for how to modify $\Dmc$ to get $\Dm$: each negative $\deldm_i$ corresponds to flipping the direction of the $d^i$ edge from clockwise to counterclockwise. 
    
    From this perspective, it makes sense to further consider tuples for instructions on how to modify any $m$-gon category to get another one.

    \begin{defi}
        For two $m$-gon categories, $\Dm$ and $\Dpm$, define the tuple
        \[
        \delddm \coloneq (\delddm_1,\ldots,\delddm_m) \in \{1,-1\}^m
        \]
        where $\delddm_i = \deldm_i \cdot \deldpm_i$.
    \end{defi}

    More concretely, we have that
    \[ 
        \delddm_i = 
        \begin{cases} 
          1 & d^i \underset{\Dm}{\circ} s = d^i \underset{\Dpm}{\circ} s \\
          -1 & d^i \underset{\Dm}{\circ} s = d^i \underset{\Dpm}{\circ} t \\ 
        \end{cases}
    \]
    where $\underset{\Dm}{\circ}$ denotes composition in $\Dm$ and $\underset{\Dm}{\circ}$ denotes composition in $\Dpm$.

    $\delddm$ is sometimes called an \emph{edge reversal code} as it describes how to reverse the edges of $\Dm$ to get $\Dpm$. Any such tuple $\delta$ conversely gives rise to a new $m$-gon category by reversing the edges of $\Dm$.

    \begin{ex}
        In all cases, $\delta^{\Dmc,\Dm} = \deldm$. Let $\C_4,\C'_4$ be the categories pictured below (left and right respectively), and we see $\delta^{\C_4,\C'_4} = (-1,1,-1,-1)$, where the red arrows represent the $-1$s, or the edges that have flipped from the first category. Note that $\delta^{\Dmc,\Dm}$ is relative - the color of the arrows shows the difference from the initial category, not necessarily the cyclic category.

        \begin{figure}[h]
        \centering
        \includegraphics[scale=.5]{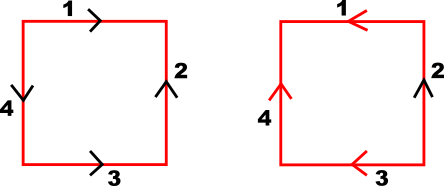}
        \caption{A visualization of $\delta^{\C_4,\C'_4}= (-1,1,-1,-1)$}
        \label{deltaCmCm'example}
        \end{figure}
        
    \end{ex}

\subsection{Enumerating Directed $m$-gons}

    In the undirected setting, the choice of $\{m,n\}$ completely determines a tiling. In the directed setting, in order for an $\{m,n\}$ tiling to be a presheaf on an $m$-gon category all of its face tiles need to ``look the same'' in terms of how their edges are labeled and directed, as in each face must be isomorphic to the representable $y(2)$.  
    
    Before we explore ways of achieving this, we first consider how many such possibilities for these tiles there are, as while there are exactly $2^m$ $m$-gon categories many of these are isomorphic to each other. We can describe these isomorphisms in terms of their effect on the representable $m$-gon, where they are generated by rotations and reflection.
        
    We can represent the categories $\Dm$ by the tuples $\deldm$. There is an action of the dihedral group $\Dim$ on $\{1,-1\}^m$, where if $\Dim$ is generated by a rotation $r$ and a reflection $f$. The action is given by
    \begin{equation}\label{action}
        r(\delta_1, \ldots, \delta_m) = (\delta_m, \delta_1, \ldots, \delta_{m-1})
    \end{equation}
    and 
    \[
    f(\delta_1, \delta_2, \ldots, \delta_m) = (-\delta_m, \ldots, -\delta_2, -\delta_1).
    \]

    This action translates to isomorphisms between $m$-gon categories, where writing $\Dm^\delta$ for the category corresponding to the tuple $\delta \in \{1,-1\}^m$ we have identity-on-objects isomorphisms $\rho \colon \Dm^\delta \cong \Dm^{r(\delta)}$ and $\psi \colon \Dm^\delta \cong \Dm^{f(\delta)}$ given by
    \[
        \rho(s) = s, \qquad \rho(t) = t, \qquad \rho(d^i) = d^\ipo
    \]
    and
    \[
        \psi(s) = t, \qquad \psi(t) = s, \qquad \psi(d^i) = d^{m+1-i}.
    \]

      \begin{figure}[h]
        \centering
        \includegraphics[scale=.4]{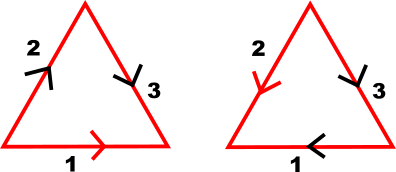}
        \caption{For the left triangle, $\deldm = (-1,1,1)$. Applying $r(\deldm)$, we get the right triangle, with edge reversal code $(1,-1,1)$.}  
        \label{trianglerotation}
    \end{figure}


    \begin{ex}

        Below are the 2 possible $\mathcal{D}_3$, categories, and the 4 possible $\mathcal{D}_4$ and $\mathcal{D}_5$ categories up to isomorphism - which correspond exactly to the number of possible directed $m$-gons, up to action by $\Dim$. The red arrows represent the -1s in the edge reversal code.
                    
    \begin{figure}[h]
        \centering
        \includegraphics[scale=.4]{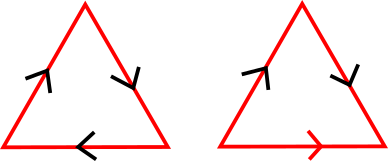}
        \caption{The only possibilites for directing a triangle. Notice that the non-cyclic directing gives a "canonical" directing of the edges, which makes it useful for constructions like simplicial homology.}
        \label{2triangles}
    \end{figure}

    \begin{figure}[h]
        \centering
        \includegraphics[scale=.4]{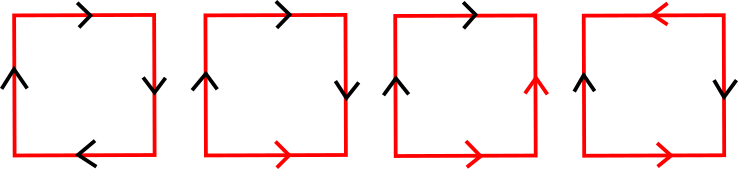}
        \caption{The 4 possible $\mathcal{C}_4$ categories.}
        \label{4squares}
    \end{figure}

    \begin{figure}[h]
        \centering
        \includegraphics[scale=.4]{Figures/4pentagons.png}
        \caption{The 4 possible $\mathcal{C}_5$ categories. }
        \label{fig:C5categories}
    \end{figure}

    \end{ex}

    We now work out exactly how many types of directed $m$-gons there are for general $m$, up to isomorphism.

    \begin{prop}
        There are precisely 
    \[
        \frac{1}{2m}\left(
        2^m + 
        2\sum_{i=1}^{\frac{m-1}{2}} 2^{gcd(i,m)}
        \right)
    \]
        isomorphism classes of $m$-gon categories when $m$ is odd, and 
    \[
        \frac{1}{2m}\left(
        2^m + 
        \left(\frac{m}{2}+1\right)2^{\frac{m}{2}} +
        2\sum_{i=1}^{\frac{m}{2}-1} 2^{gcd(i,m)}
        \right)
    \]
        when $m$ is even.
    \end{prop}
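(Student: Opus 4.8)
The plan is to apply Burnside's lemma (the orbit-counting formula) to the action of $\Dim$ on $\{1,-1\}^m$ defined in \eqref{action}. First I would record that isomorphism classes of $m$-gon categories are in bijection with the orbits of this action: the isomorphisms $\rho,\psi$ above show that tuples in a common orbit give isomorphic categories, and conversely any isomorphism $\Dm^\delta \to \Dm^{\delta'}$ must be the identity on objects (for instance $0$ is the only object receiving no non-identity morphism and $2$ the only object emitting none), hence is determined by a compatible permutation of $\{s,t\}$, of the $d^i$'s, and of the $v^i$'s respecting the incidence relation $d^i \leftrightarrow \{v^i, v^{\ipo}\}$, i.e.\ by a symmetry of the underlying $m$-cycle, so it realizes an element of $\Dim$ carrying $\delta$ to $\delta'$. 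Thus the number of isomorphism classes equals
\[
\frac{1}{|\Dim|}\sum_{g\in\Dim}|\mathrm{Fix}(g)| = \frac{1}{2m}\sum_{g\in\Dim}\#\{\delta\in\{1,-1\}^m : g\delta=\delta\},
\]
and it remains to add up the contributions of the $m$ rotations and the $m$ reflections.

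For a rotation $r^k$ with $0\le k\le m-1$, which cyclically shifts coordinates by $k$, a tuple is fixed exactly when it is constant on each orbit of the subgroup $\langle k\rangle\le\Z/m$; there are $\gcd(k,m)$ such orbits, so $|\mathrm{Fix}(r^k)| = 2^{\gcd(k,m)}$. The term $k=0$ contributes $2^m$. For the rest I would pair $k$ with $m-k$, using $\gcd(k,m)=\gcd(m-k,m)$: this gives $\sum_{k=1}^{m-1}2^{\gcd(k,m)} = 2\sum_{i=1}^{(m-1)/2}2^{\gcd(i,m)}$ when $m$ is odd, and $\sum_{k=1}^{m-1}2^{\gcd(k,m)} = 2^{m/2} + 2\sum_{i=1}^{m/2-1}2^{\gcd(i,m)}$ when $m$ is even, the unpaired term being $k=m/2$ with $\gcd(m/2,m)=m/2$.

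The reflections are the elements $r^k f$ for $0\le k\le m-1$, and $r^k f$ acts by $(r^k f\,\delta)_i = -\delta_{\sigma_k(i)}$, where $\sigma_k(i)\equiv m+1-i+k\pmod m$ is an involution of the index set. The crucial observation is that if $\sigma_k$ fixes some index $i_0$, then a fixed tuple would have to satisfy $\delta_{i_0}=-\delta_{i_0}$, which is impossible, so $r^k f$ has \emph{no} fixed tuples; whereas if $\sigma_k$ is fixed-point free it is a product of $m/2$ transpositions, and a fixed tuple is obtained by choosing $\delta_i$ freely on one element of each transposition (the partner then being forced to the opposite sign), giving exactly $2^{m/2}$ fixed tuples. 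Now $\sigma_k(i)=i$ amounts to $2i\equiv k+1\pmod m$: when $m$ is odd this is always solvable (uniquely), so every reflection contributes $0$; when $m$ is even it is solvable iff $k$ is odd, so the $m/2$ reflections with $k$ even each contribute $2^{m/2}$ and those with $k$ odd contribute $0$, for a total reflection contribution of $\tfrac{m}{2}\,2^{m/2}$.

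Adding the rotation and reflection totals and dividing by $2m$ then yields $\tfrac{1}{2m}\bigl(2^m + 2\sum_{i=1}^{(m-1)/2}2^{\gcd(i,m)}\bigr)$ for $m$ odd and $\tfrac{1}{2m}\bigl(2^m + (\tfrac{m}{2}+1)2^{m/2} + 2\sum_{i=1}^{m/2-1}2^{\gcd(i,m)}\bigr)$ for $m$ even, which are precisely the claimed formulas. The rotation count and the pairing bookkeeping are routine; the step I expect to be the main obstacle is the analysis of the reflections, specifically recognizing that the sign twist in the action forces a reflection to have no fixed tuples whenever its underlying index-involution has a fixed point — this is exactly what makes the reflection contribution vanish entirely for odd $m$ and be supported on only half of the reflections for even $m$.
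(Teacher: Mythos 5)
Your reduction to Burnside's lemma and the resulting computation coincide with the paper's proof, which likewise counts orbits of the $\Dim$-action on $\{1,-1\}^m$; your element-by-element analysis of the reflections via the congruence $2i \equiv k+1 \pmod m$ is in fact cleaner than the paper's table of fixed-point counts, whose values for the two reflection classes appear to be interchanged relative to the action as written in \eqref{action} (with that action $f$ itself is a reflection with $2^{m/2}$ fixed tuples for $m$ even, and $rf$ one with none), though since the two classes have equal size the totals, and hence the formulas, agree.

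The step that does not hold up is the converse half of your preliminary bijection, namely the claim that every isomorphism $\Dm^\delta \to \Dm^{\delta'}$ realizes an element of $\Dim$ carrying $\delta$ to $\delta'$. You are right that such an isomorphism fixes the objects and is given by a permutation of $\{s,t\}$ together with a symmetry of the $m$-cycle of $d^i$'s and $v^i$'s, but these are independent choices: the identity-on-objects functor fixing every $d^i$ and $v^i$ and interchanging $s$ with $t$ is an isomorphism $\Dm^\delta \cong \Dm^{-\delta}$, and global negation is not among the transformations $r^k$, $r^k f$ of \eqref{action}. Nor is $-\delta$ always in the $\Dim$-orbit of $\delta$: for $m=6$ and $\delta = (1,1,-1,1,-1,-1)$ one has $f(\delta)=\delta$, so the orbit consists only of the rotations of $\delta$, and $-\delta$ is not one of them; a Burnside count over the order-$4m$ group generated by $\Dim$ and negation then gives $8$ classes for $m=6$ rather than the $9$ produced by the stated formula. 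The paper sidesteps this by never proving the bijection and, as its examples make explicit, really counting directed $m$-gons ``up to action by $\Dim$,'' i.e., up to rotation and reflection of the picture; your Burnside computation correctly counts exactly that, but the argument you give to identify these orbits with isomorphism classes of categories would need either to exclude the $s \leftrightarrow t$ swap by fiat or to work with the larger group.
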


    We arrive at this formula by counting the number of orbits of the action of $\Dim$ on $\{1,-1\}^m$.

    \begin{proof}
        Burnside's Orbit Counting Theorem states that the number of orbits is equal to 
    \[
        \frac{1}{|\Dim|}\sum_{g \in \Dim}|X^g|
    \]
    where $X^g$ is the subset of $X$ fixed by $g$.

    As the number of fixed points is constant within conjugacy classes, we need only compute $X^g$ for a $g$ representative of each conjugacy class, then multiply by the size of that class.

    Consider first the case where $m$ is even: we have conjugacy classes
    \[
        \left\{id_{D_m}\right\}, 
        \left\{r^i, r^{m-i}\right\}, 
        \left\{r^{m/2}\right\}, 
        \left\{f, r^2f, \ldots, r^{m-2}f\right\}, 
        \left\{rf, r^3f, \ldots, r^{m-1}f\right\}
    \]
    for $i \in \left\{1, \ldots, \frac{m}{2}-1 \right\}$. 
    Then we compute $|X^g|$ to get 
    \[
        \left|X^{id_{\Dim}}\right| = 2^m, 
        \left|X^{r^i}\right| = 2^{gcd(m,i)}, 
        \left|X^{r^{\frac{m}{2}}}\right| = 2^{\frac{m}{2}}, 
        \left|X^{f}\right| = 0, 
        \left|X^{fr}\right| = 2^{\frac{m}{2}}.
    \]
    Adding up these terms, the number of orbits is 
    \[
        \frac{1}{2m}\left(
        2^m + 
        \left(\frac{m}{2}+1\right)2^{\frac{m}{2}} +
        2\sum_{i=1}^{\frac{m}{2}-1} 2^{gcd(i,m)}
        \right).
    \]

    When $m$ is odd, we have conjugacy classes 
     \[
        \left\{id_{\Dim}\right\}, 
        \left\{r^i, r^{m-i}\right\}, 
        \left\{f, rf, r^2f, \ldots, r^{m-1}f\right\} 
    \]
    for $i \in \{1, \ldots, \frac{m}{2}-1 \}$. 
    We compute $X^g$ similarly as
    \[
        \left|X^{id_{D_m}}\right| = 2^m, 
        \left|X^{r^i}\right| = 2^{gcd(m,i)},  
        \left|X^{f}\right| = 0,
    \]
    and find that the number of orbits is 
    \[
        \frac{1}{2m}\left(
        2^m + 
        2\sum_{i=1}^{\frac{m-1}{2}} 2^{gcd(i,m)}
        \right).
    \]
    \end{proof}
    

\subsection{Tilings as Presheaves}

    The goal of this paper is to describe presheaves on $\Dm$ which correspond via geometric realization to an $\{m,n\}$ tiling. While a concrete description of the elements of such a presheaf is the subject of ongoing related work, for present purposes we will define such a presheaf $T$ by setting $T_0$, $T_1$, and $T_2$ to be ``the vertices, edges, and tiles respectively of the $\{m,n\}$ tiling,'' where the hard work goes into defining structure maps $s,t,d_1,...,d_m$ that make this a presheaf on some category $\Dm$ whose realization is in fact the tiled plane. 

    In this section, we define these presheaves in mostly combinatorial terms and prove the property that they geometrically realize to planar tilings.

    \begin{defi}
        There is a functor $\Dm \to \mathsf{Top}$ sending $0$ to the one-point space, $1$ to the interval, and $2$ to the standard $m$-gon (homeomorphic to the closed disk), where $s,t$ map to the source and target inclusions of the point into the interval and $d^1,\ldots,d^m$ to the $m$ successive edge inclusions of the interval into the $m$-gon. 

        We write $|X|$ for the geometric realization of a presheaf $X$ on $\Dm$ with respect to this functor. 
    \end{defi}

    This notion of geometric realization justifies calling elements of $X_0$ vertices, elements of $X_1$ edges, and elements of $X_2$ tiles for $X$ a presheaf on an $m$-gon category $\Dm$. The maps $d_i$ select the $i$-th edge of a tile, and the maps $s$ and $t$ select the source and target of an edge.

    \begin{defi}\label{directedtiling}
        A directed $\{m,n\}$ tiling on a space $X$ is a presheaf $T$ on $\Dm$, satisfying the following conditions:
        \begin{enumerate}
            \item $T$ is \emph{nonsingular}, meaning for each cell $x \in X_i$, the corresponding morphism $y(i) \to X$ is levelwise injective 
            \item Any two tiles $x_1,x_2 \in T_2$ have at most 1 edge in common
            \item Each vertex $v \in T_0$ has exactly $n$ edges $e$ such that either $s(e)=v$ or $t(e)=v$ 
            \item $|T|$ is homeomorphic to $X$
        \end{enumerate}
    \end{defi}

     The conditions in \cref{directedtiling} ensure that a presheaf can be visually represented as a regular tiling of the form $\{m,n\}$, as follows.
    \begin{enumerate}

        \item ensures that none of the edges or vertices of a tile in $T$ are identified with each other.

        \item ensures that no two distinct tiles of $T$ share more than one edge. This avoids situations such as a vertex appearing to be in the middle of an edge, or a pair of tiles forming a cylinder.

        \item ensures that each vertex of $T$ is adjacent to exactly $n$ edges (and accordingly $n$ tiles by (4)), so that the tiling agrees with with the Schlafli symbol $\{m,n\}$.

        \item ensures that the tiling topologically models the space $X$.
    \end{enumerate}

\begin{defi}
        Let $T$ be a directed tiling on $\Dm$ and $x_1,x_2$ two tiles. We say $x_1$ and $x_2$ are adjacent if
        \[E = \bigcup_{i=1}^{m} d^i(x_1)\bigcap \bigcup_{j=1}^{m}d^j(x_2) \]
        contains exactly one edge. Observe, for $x_1 \not= x_2$, $|E| = 1$ or $0$. 
        A tile $x$ and an edge $e$ are said to be adjacent if $e=d_i(x)$ for some $i$. An edge $e$ and a vertex $v$ are said to be adjacent if $s(e)=v$ or $t(e)=v$.
    \end{defi}

The following is an immediate consequence of the condition that a tiling on the plane has geometric realization homeomorphic to the plane.

\begin{lemma}
    Given a directed tiling $T$ on the plane, for all $e \in  T_1$, $e$ is adjacent to exactly two tiles, i.e.$\left|\coprod\limits_{i=1}^m d_i^{-1}(e)\right| = 2$.
\end{lemma}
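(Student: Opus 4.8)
The plan is to read off the local structure of $|T|$ at an interior point of the edge $e$ from the colimit defining geometric realization, and then use the hypothesis $|T| \cong \mathbb{R}^2$ to force the number of tiles meeting $e$ to be exactly two.

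First I would fix $e \in T_1$ and let $p \in |T|$ be the image of the midpoint of the interval realizing $e$ under the map $|y(1)| \to |T|$ corresponding (via Yoneda) to the cell $e$. Because $T$ is nonsingular, $|T|$ is a regular CW complex whose open cells are the images of the interiors of the vertices, edges, and tiles of $T$, and $p$ lies in the closure of exactly the following open cells: the open edge $e$ itself, and the open $2$-cell of each tile $x$ with $e = d^i(x)$ for some $i$. Write
\[
k = \left| \coprod_{i=1}^m (d^i)^{-1}(e) \right|
\]
for the number of such pairs $(x,i)$; by nonsingularity of each tile $x$ the edges $d^1(x),\ldots,d^m(x)$ are distinct, so $k$ is also the number of distinct tiles adjacent to $e$. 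Gluing, for each such pair, a half-disk neighborhood of the $i$-th edge of the standard $m$-gon realizing $x$ along the open interval around $p$, one sees that $p$ has a neighborhood in $|T|$ homeomorphic to a neighborhood of a spine point of the ``open book'' $Y_k$ obtained from a line $\ell$ by attaching $k$ closed half-planes along $\ell$ (so $Y_0 = \ell$, $Y_1$ is a closed half-plane, $Y_2 = \mathbb{R}^2$).

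Next I would rule out every $k \neq 2$. Every point of $\mathbb{R}^2$ has local homology $H_j(\mathbb{R}^2, \mathbb{R}^2 \setminus \{\mathrm{pt}\}) \cong \Z$ for $j = 2$ and $0$ otherwise, so the same must hold at $p \in |T| \cong \mathbb{R}^2$, hence for $p$ in $Y_k$. A small closed ball around $p$ in $Y_k$ is the cone on its link, which is $S^0$ when $k = 0$ and the graph consisting of two vertices joined by $k$ arcs when $k \geq 1$; in either case $H_2(Y_k, Y_k \setminus \{p\}) \cong \widetilde H_1(\mathrm{link}) \cong \Z^{\max(k-1,\,0)}$. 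This is isomorphic to $\Z$ precisely when $k = 2$, so $k = 2$, which is the claim. (Alternatively, $k = 0$ is excluded because a neighborhood of $p$ would then be one-dimensional, contradicting invariance of domain, and the remaining cases are distinguished by the fundamental group $\Z$ of a small punctured neighborhood.)

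The step I expect to be the main obstacle is making the local model rigorous: verifying that a sufficiently small neighborhood of $p$ in the colimit $|T|$ is precisely an open book $Y_k$ — equivalently, that geometric realization commutes with passing to the relevant open stars of cells, and that no vertices, other edges, or other tiles of $T$ contribute to a neighborhood of $p$. This is where nonsingularity (condition (1)) of $T$, together with standard facts about geometric realizations of nonsingular presheaves, is used. Once the model $Y_k$ is established, ruling out $k \neq 2$ is the routine homological computation above.
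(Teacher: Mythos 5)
Your argument is correct, and it follows the same underlying idea as the paper: the paper offers no proof at all, simply declaring the lemma ``an immediate consequence of the condition that a tiling on the plane has geometric realization homeomorphic to the plane,'' and your local-homology computation on the open book $Y_k$ (with the observation that nonsingularity makes the incidence count equal the tile count) is precisely the justification the paper leaves implicit. The only remaining work, as you correctly flag, is the routine verification that a small neighborhood of an interior point of $|e|$ in the colimit really is modeled by $Y_k$, which is standard for regular CW structures arising from nonsingular presheaves.
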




\begin{prop}
    For any directed $\{m,n\}$ tiling on
    \begin{itemize}
        \item the sphere for $\{m,n\} = \{3,3\},\{3,4\},\{4,3\},\{3,5\},\{5,3\}$;
        \item the Euclidean plane for $\{m,n\} = \{3,6\},\{4,4\},\{6,3\}$; or
        \item the hyperbolic plane for any other choice of $\{m,n\}$;
    \end{itemize}
    the sets $T_0,T_1,T_2$ are in bijection with the sets of vertices, edges, and tiles in the undirected $\{m,n\}$ tiling, where the elements $d_1(x),...,d_m(x) \in T_1$ associated to $x \in T_2$ agree with the edges of the tile corresponding to $x$, and the vertices $s(e),t(e) \in T_0$ associated to $e \in T_1$ agree with the endpoints of the edge corresponding to $e$.
\end{prop}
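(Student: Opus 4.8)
The plan is to show that the geometric realization $|T|$, with the cells supplied by $T_0$, $T_1$, and $T_2$, is a polygonal decomposition of $X$ satisfying the combinatorial properties that characterize the standard $\{m,n\}$ tiling, and then to invoke the uniqueness of that tiling. First I would check that $|T|$ carries a CW structure whose $0$-, $1$-, and $2$-cells are exactly the elements of $T_0$, $T_1$, and $T_2$, with $s$, $t$, and $d^1,\dots,d^m$ recording the incidence relations. Nonsingularity (condition (1)) says that the characteristic map of each tile $x$ sends the $m$ boundary edges of the standard $m$-gon to $m$ distinct edges of $T$ and its $m$ vertices to $m$ distinct vertices, so the attaching map embeds the boundary circle; since the interiors of distinct $2$-cells are disjoint open disks, each characteristic map is then an embedding of the closed $m$-gon. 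Condition (2) says distinct tiles meet along at most one edge, and together with the preceding lemma (each $e \in T_1$ bounds exactly two tiles) and condition (3) (each $v \in T_0$ meets exactly $n$ edges) this makes $|T|$ a locally finite polygonal complex in which every edge bounds exactly two $m$-gons and every vertex meets exactly $n$ edges.

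Next I would pin down the local structure at a vertex. For $v \in T_0$, form the combinatorial link: one node for each incident edge, one arc for each corner of a tile at $v$, joining the two incident edges of that tile at $v$. Since each edge incident to $v$ bounds exactly two tiles and has $v$ as an endpoint, each node of the link has degree exactly two, so the link is a disjoint union of cycles. By condition (4), $|T|$ is homeomorphic to the surface $X$, which forces the link of $v$ to be a single circle; hence the $n$ edges and $n$ tiles at $v$ sit in one cyclic order, exactly as at a vertex of an honest $\{m,n\}$ tiling. Thus $|T|$ is, as a polygonal complex, a tiling of the simply connected surface $X$ by $m$-gons with exactly $n$ of them around each vertex.

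It then remains to invoke the uniqueness of such a tiling: up to isomorphism of polygonal complexes there is exactly one tiling of a simply connected surface by $m$-gons with $n$ around each vertex, namely the canonical $\{m,n\}$ tiling of the sphere, the Euclidean plane, or the hyperbolic plane according to the value of $\{m,n\}$, as reviewed in Section~1. A self-contained proof runs by a developing argument: fix a tile of $|T|$ together with a tile of the standard tiling and an identification of the two, then extend across shared edges one tile at a time along a spanning tree of the dual graph; the extension is forced at each step, and simple connectivity of $X$ together with the fact that going once around any vertex closes up after exactly $n$ tiles in both tilings shows the result is independent of the chosen path. Transporting this isomorphism of polygonal complexes back along the bijections of cells with $T_0$, $T_1$, $T_2$ gives the asserted bijections, compatible with $d^1,\dots,d^m$, $s$, and $t$ by construction.

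I expect the main obstacle to be this last rigidity step, namely verifying that the homogeneous local combinatorics genuinely determines the global tiling, together with the bookkeeping needed to upgrade ``$|T|$ is homeomorphic to $X$'' into ``$|T|$ is a locally finite CW complex with circular vertex links,'' since a homeomorphism a priori only sees the underlying space and not the cell structure. In the five spherical cases the rigidity is easier: the incidence identities $mF = 2E$ and $nV = 2E$ together with $V - E + F = 2$ determine $V$, $E$, and $F$, so the developing argument involves only finitely many tiles; the Euclidean and hyperbolic cases need the full inductive construction.
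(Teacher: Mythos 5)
Your proposal is correct and follows essentially the same route as the paper's proof: verify via conditions (1)--(3) and the two-tiles-per-edge lemma that the realization is locally an $\{m,n\}$ tiling at every vertex, then use condition (4) to conclude. You go further than the paper in making explicit the final rigidity step --- the developing argument showing that a tiling of a simply connected surface by $m$-gons with $n$ around each vertex is combinatorially unique --- which the paper's proof leaves implicit in the phrase ``gives a decomposition of the plane \ldots{} in the manner of an $\{m,n\}$ tiling,'' and that rigidity is precisely what is needed to obtain the asserted bijections with the cells of the standard tiling.
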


\begin{proof}
Let $v$ be a vertex in $T_0$. By condition (3) in \cref{directedtiling}, we know that $v$ is adjacent to $n$ edges. By the previous lemma, each of these edges is adjacent to two faces. Then since $X$ is homeomorphic to the plane, there must be $n$ faces at each vertex, one between each pair of successive edges going around the vertex, so that $v$ belongs to an open disk in the geometric realization. Thus, $X$ behaves locally like an undirected $\{m,n\}$ tiling at each face, edge, and vertex. This completes the proof, as by condition (4) in \cref{directedtiling} the geometric realization of $T$ now gives a decomposition of the plane into $m$-gon tiles, edges, and vertices in the manner of an $\{m,n\}$ tiling.
\end{proof}
    


    \begin{remark}\label{conformity}
        Observe taking the undirected graph of the $\{m,n\}$ tiling and arbitrarily directing each of its edges does not generally form a directed tiling. This is due to the fact that these arbitrarily chosen directions may not correspond to a $\Dm$ presheaf, as different tiles may have edges directed in incompatible patterns. We will say ``a directed graph of the form $\{m,n\}$'' to refer to a graph whose underlying undirected graph is that of the $\{m,n\}$ tiling, and in the next section discuss conditions for this to be the underlying directed graph of a directed $\{m,n\}$ tiling. 
    \end{remark}
    
    For convenience, when $\{m,n\}$ are fixed we will sometimes call a directed tiling an \emph{alignment}, to emphasize that only the directions of the edges and their labels within each tile are being considered, rather than the undirected aspects of the tiling.


\section{Constructing Tilings by Edge Reversal}

    In general it is difficult to fully specify an alignment for a planar tiling aside from a few simple examples, especially in the hyperbolic setting. One approach to simplifying the construction of alignments is to begin with a relatively straightforward directed $\{m,n\}$ tiling and describe how to modify it to get different alignments.
    
    
    In order to do this we introduce a presheaf on $\Dm$ that describes whether or not the direction of an edge in a tiling will be reversed when forming a new alignment. This allows us to model the process of arbitrarily reversing or preserving the direction of each edge. This in turn creates a new directed graph, from which we may be able to build a valid alignment.

\subsection{The direction presheaf $\Lambda$}

    \begin{defi} 
        The direction presheaf on $\Dm$,
        \[
        \Lambda \coloneq 
        \begin{tikzcd}[ampersand replacement=\&]
                    \Lambda_0  \&
                    \Lambda_1 \arrow[l, shift left = .75ex] \arrow[l, shift right = .75ex] \&
                    \Lambda_2, \arrow[l, draw=none, shift right = 3ex, "\vdots"]  \arrow[l, shift right = 2.5ex, ""'] \arrow[l, shift left = 2.5ex, ""]
        \end{tikzcd}
        \]
        is given by
        \[
        \begin{tikzcd}[ampersand replacement=\&]
                    *  \&
                    \{1,-1\} \arrow[l, shift left = .75ex] \arrow[l, shift right = .75ex] \&
                    \{1,-1\}^m, \arrow[l, draw=none, shift right = 3ex, "\vdots"]  \arrow[l, shift right = 2.5ex, "\pi_1"'] \arrow[l, shift left = 2.5ex, "\pi_m"]
        \end{tikzcd}
        \]
        where $\pi_i \colon \{1,-1\}^m \to \{1,-1\}$ is the projection function onto the $i^{\textrm{th}}$ component of the product.
    \end{defi}
    
    An element of $\Lambda_2$ can be interpreted as a scheme or code for reversing the edges in the $\Dm$-tile. Accordingly, the tuple $\delddm \in \Lambda_2$ for some other $m$-gon category $\Dpm$ is precisely the code for producing the $\Dpm$-tile.

    \begin{remark}\label{groupstructure}
        Note that $\Lambda$ has the structure of an abelian group object among presheaves on $\Dm$ under multiplication. In other words, it lifts to a functor $\Dm^{op} \to \mathsf{Ab}$ as each of its components forms an abelian group under multiplication and the projection maps are homomorphisms. We will write $\cdot \; \colon \Lambda \times \Lambda \to \Lambda$ for its group operation as a map of presheaves.    
    \end{remark}
    
    The dihedral group $\Dim$ (generated by a rotation $r$ and reflection $f$) acts on the set $\Lambda_2 = \{1,-1\}^m$ in the manner 
    described in \eqref{action}:
    \[
    r(\delta_1, \ldots, \delta_m) = (\delta_2, \ldots, \delta_m, \delta_1)
    \]
    and 
    \[
    f(\delta_1, \delta_2, \ldots, \delta_m) = (-\delta_m, \ldots, -\delta_2, -\delta_1),
    \]
    corresponding to how the edge directions of a directed tile change under rotation and reflection.
    
    This action does not respect the group structure on $\Lambda_2$ (rotation does, reflection does not), but its orbits give rise to sub-presheaves of $\Lambda$ that will help us construct directed tilings.
    
    \begin{defi}
        Given two $m$-gon categories $\Dm$ and $\Dpm$, the $\Dpm$-restricted direction presheaf $\Lambda_{\Dpm}$ is a sub-presheaf of $\Lambda$ on $\Dm$ given by
        \[
            \Lambda_{\Dpm} \coloneq
            \begin{tikzcd}[ampersand replacement=\&]
                *  \&
                \{-1,1\} \arrow[l, shift left = .75ex] \arrow[l, shift right = .75ex] \&
                \deldm \cdot \left[ \deldpm \right]_{\Dim},  \arrow[l, draw=none, shift right = 3ex, "\vdots"]  \arrow[l, shift right = 2.5ex, "\pi_1"'] \arrow[l, shift left = 2.5ex, "\pi_m"]
            \end{tikzcd}
        \]
        where $\left[ \deldpm \right]_{\Dim}$ denotes the orbit of $\deldpm$ under the dihedral action on $\Lambda_2$.
    \end{defi}

    The set $\Lambda_{\Dpm}$ is the restriction of $\Lambda_2$ to only those edge reversal codes which turn a tile modeled by $\Dm$ into a tile modeled by $\Dpm$ up to rotation and reflection. This is expressed as the products of $\deldm$ with the orbit of $\deldpm$ under the dihedral action as applying such an edge reversal code to a tile modeled by $\Dm$ (whose edges are directed according to the code $\deldm$) cancels out the $\deldm$ leaving only the orbit of $\deldpm$. These are precisely the tiles modeled by $\Dpm$ up to rotation and reflection. 

    \begin{ex}
        Let $\C_5$ be the pentagon category with edge reversal code $(1,1,-1,-1,1)$, and $\C_5'$ the pentagon category with edge reversal code $(1,-1,1,1,1)$. Then some elements of $(\Lambda_{\Dpm})_2$ can be computed for the dihedral group elements $e$ and $r^{-1}$ respectively. In the figure below, we show the tiles corresponding to $\delta^{\C_5}$, $\delta^{\C_5'}$, and $r^{-1}(\delta^{\C_5'})$.

    \begin{figure}
        \centering
        \includegraphics[width=0.8\linewidth]{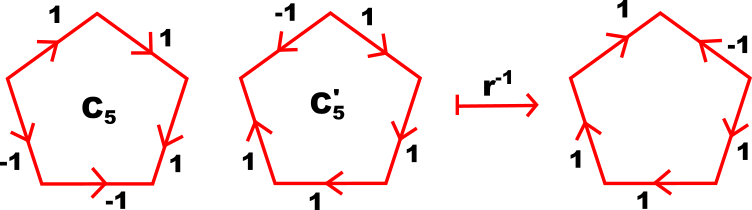}
        \caption{$r^{-1}$ applied to $\mathcal{C}_5'$. }
        \label{fig:enter-label}
    \end{figure}

The elements of $\Lambda_{\C_5'}$ corresponding to these choices are $$(1,1,-1,-1,-1) = (1,1,-1,-1,1)*(1,1,1,1,-1),$$ and $$(-1,1,-1,-1,1) = (1,1,-1,-1,1)*(-1,1,1,1,1).$$
    \end{ex}

\subsection{Edge reversal via maps of presheaves}

        
    A map from a presheaf on $\Dm$ into $\Lambda$ consists precisely of the information of whether or not to reverse the direction of each edge. We therefore call such a map an \emph{edge reversal}.

    \begin{defi}
        The underlying directed graph of a presheaf $X$ on $\Dm$ is given by the subdiagram $\begin{tikzcd}[ampersand replacement=\&] T_0 \& T_1 \arrow[l, shift left = .75ex, "t"] \arrow[l, shift right = .75ex, "s"'] \end{tikzcd}$, where $s$ and $t$ respectively determine the source and target of each edge.
    \end{defi}
        
    Suppose $T$ is an $\{m,n\}$ directed tiling on $\Dm$, and consider a map of presheaves $\tau \colon T \to \Lambda$. We can form a directed graph with vertices $T_0$ and edges $T_1$, but compared to the underlying graph of $T$ the direction of each edge $e$ is reversed if $\tau(e) = -1$ and preserved if $\tau(e) = 1$. From an undirected perspective this graph looks the same and still resembles an $\{m,n\}$ tiling. In fact, maps of this form allow us to describe any directed graph whose whose underlying undirected graph is that of the $\{m,n\}$ tiling. By naturality, a map $T \to \Lambda$ is completely determined by the function $T_1 \to \Lambda_1 = \{1,-1\}$.

    As discussed in \cref{conformity}, the tiles will not generally resemble the $\Dm$-tile, or even necessarily all look alike up to rotation and reflection. Thus, an edge-reversal does not necessarily describe a new alignment.

    \begin{ex}\label{5,4TtoLambdaEx}
    Let $T$ be the directed $\{5,4\}$ tiling on $\Dmc$ as seen in Figure~\ref{fig:5,4DirectedTiling},
    \begin{figure}[h]
        \includegraphics[scale=.6]{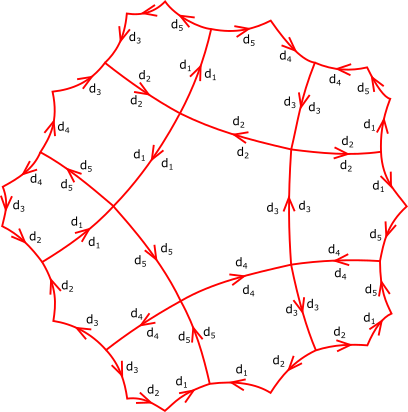}
        \caption{The $\{5,4\}$ directed tiling on $\mathcal{C}_5^\circlearrowright$.}
        \label{fig:5,4DirectedTiling}
    \end{figure}
    and let $\tau : T \to \Lambda$ send each element of $T_2$ to $(-1,1,1,1,-1)$, or equivalently send each edge labeled $d_1$ or $d_5$ to -1 and those labeled $d_2$, $d_3$, or $d_4$ to 1. $\tau$ thus carries the imformation of how to turn this tiling into the directed graph where the edges labeled $d_1$ or $d_5$ are reversed, as in Figure~\ref{fig:5,4tau(T)}. 
    
    \begin{figure}[h]
    \includegraphics[scale=.6]{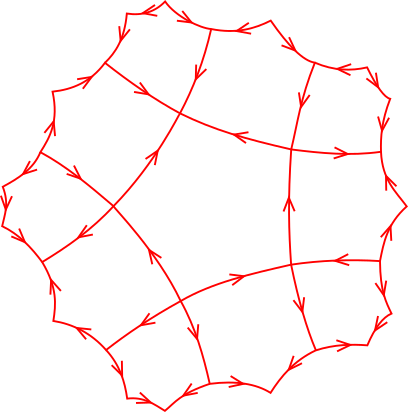}
    \caption{Visualization of $\tau(T)$.}
    \label{fig:5,4tau(T)}
    \end{figure}
    Note that it is no longer appropriate to include the edge labels on this graph as it does not necessarily extend to a presheaf on the same $m$-gon category. In fact in this case there exists no presheaf on $\Dmc$ that describes this directed graph, as the edges in the second graph do not point in a cyclic manner around the pentagon faces. However, this new graph does extend to a presheaf on a different pentagon category (see \cref{tauexample}). 
    \end{ex}

\subsection{Constructing the new alignment}

     A map $\tau \colon T \to \Lambda$ describes how to reverse the directions of some edges in a directed tiling $T$ to get a different directed graph. We now describe how to ensure that $\tau$ extends to an alignment. The key is to ensure that each $m$-gon face is given by the same directed $m$-gon up to rotation and reflection. For some other $m$-gon category $\Dpm$, by demanding that $\tau$ factors through $\Lambda_{\Dpm}$, we can recover a $\Dpm$ presheaf whose underlying directed graph is described by $\tau$.

    \begin{theorem}\label{newtiling}
        Given a directed tiling $T$ on an $m$-gon category $\Dm$ and a map $\tau \colon T \to \Lambda_{\Dpm}$ for another $m$-gon category $\Dpm$, there is a directed tiling $T'$ on $\Dpm$ whose underlying directed graph is obtained from that of $T$ by reversing the direction of every edge $e \in T_1$ for which $\tau(e) = -1$.
    \end{theorem}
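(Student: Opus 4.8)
The plan is to build $T'$ by hand as a presheaf on $\Dpm$ whose cells are literally those of $T$, and then to check the four conditions of \cref{directedtiling} together with the claim about the underlying graph. Set $T'_0 := T_0$, $T'_1 := T_1$, $T'_2 := T_2$, and define $s',t' \colon T'_1 \to T'_0$ by $(s'(e),t'(e)) := (s(e),t(e))$ when $\tau(e)=1$ and $(s'(e),t'(e)) := (t(e),s(e))$ when $\tau(e)=-1$; by construction the underlying directed graph of $T'$ is exactly the one described in the statement. For the face maps, fix a tile $x \in T_2$. Since $\tau$ factors through $\Lambda_{\Dpm}$ we have $\tau(x)\in\deldm\cdot[\deldpm]_{\Dim}$, so the tuple $\deldm\cdot\tau(x)$ — which records the directions of the edges of $x$ \emph{after} the reversals prescribed by $\tau$, read off in the labeling $d^1,\dots,d^m$ inherited from $T$ — lies in the orbit $[\deldpm]_{\Dim}$. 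Hence there is a dihedral relabeling $\sigma_x$ of the edge-slots of $x$ after which the tile becomes an honest $\Dpm$-tile; set $d'^i(x):=d^{\sigma_x(i)}(x)$. The composition relations of $\Dpm$ then determine the remaining components of $T'$, consistently, precisely because the relabeled tile $x$ is a $\Dpm$-tile, so $T'$ is a well-defined presheaf on $\Dpm$ each of whose tiles, with its faces, is a copy of the representable $y(2)$.

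Conditions (1)--(3) are then immediate, as each depends only on data our construction leaves unchanged: the edges $\{d'^i(x)\}_i$ of a tile $x$ in $T'$ and its corners form the same \emph{sets} as $\{d^i(x)\}_i$ and the corners of $x$ in $T$ (merely relabeled), so nonsingularity of $T$ gives nonsingularity of $T'$; the set of edges common to any two tiles is unchanged, so ``at most one common edge'' transfers; and every edge has the same unordered pair of endpoints as in $T$, so each vertex is still adjacent to exactly $n$ edges. It remains to verify (4), that $|T'|\cong X$; since $|T|\cong X$ it suffices to produce a homeomorphism $|T|\cong|T'|$.

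Both $|T|$ and $|T'|$ are colimits of $\Top$-valued functors over their categories of elements, built from the realization functors $F\colon\Dm\to\Top$ and $F'\colon\Dpm\to\Top$, and $\smallint T$ and $\smallint T'$ have the same objects. I would first check that matching each structure morphism of $T$ with the structure morphism of $T'$ pointing at the same cell defines an identity-on-objects isomorphism $\smallint T\cong\smallint T'$; using nonsingularity of $T$ this is a routine (if fiddly) verification. Transporting the diagram for $T'$ along this isomorphism yields a second functor $\smallint T\to\Top$ agreeing with $F\circ\mathrm{proj}$ on objects and differing on morphisms only by the endpoint-swapping self-homeomorphism of the standard interval (inserted at each edge $e$ with $\tau(e)=-1$) and the geometric dihedral symmetry of the standard $m$-gon realizing $\sigma_x$ (inserted at each tile $x$). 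The claim is that these self-homeomorphisms assemble into a natural isomorphism between the two diagrams; granting it, the colimits are canonically homeomorphic and $|T'|\cong|T|\cong X$. After tracking orientations, the naturality square at a morphism $d^i(x)\to x$ collapses to the identity $\deldm_i\,\epsilon_x=\epsilon_x\,\deldm_i$, where $\epsilon_x=\pm1$ records whether $\sigma_x$ is a rotation or a reflection — and this holds exactly because $\sigma_x$ was chosen so that $\deldpm_{\sigma_x^{-1}(i)}=\epsilon_x(\deldm\cdot\tau(x))_i$ for all $i$.

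The step I expect to be the main obstacle is this last one: checking that the chosen self-homeomorphisms really do form a natural isomorphism of diagrams, together with the functoriality of the isomorphism $\smallint T\cong\smallint T'$. The difficulty is entirely combinatorial bookkeeping — cyclic orders, basepoints, and the distinction between rotation- and reflection-type $\sigma_x$ (for which the ``clockwise'' sense of a tile's edge labels reverses) — but it must be organized with care. This step is also where the hypothesis genuinely enters: only because $\tau$ lands in $\Lambda_{\Dpm}$ does a relabeling $\sigma_x$ exist at every tile, whereas for an arbitrary edge reversal $\tau\colon T\to\Lambda$ the reversed graph need not underlie a presheaf on any $m$-gon category at all.
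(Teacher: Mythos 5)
Your construction is essentially identical to the paper's: the same cell sets, the same case-split definition of $s',t'$ via $\tau(e)$, and the same choice of a dihedral element $\sigma_x$ with $\tau(x)=\deldm\cdot\sigma_x(\deldpm)$ used to set $d_i'(x)=d_{\sigma_x(i)}(x)$, after which the presheaf relations for $\Dpm$ hold tile by tile. The only divergence is that you sketch an explicit comparison of geometric realizations for condition (4), whereas the paper simply observes that the cells, the edges of each tile, and the endpoints of each edge are merely permuted and declares the result evidently a directed tiling; your extra care there is harmless (and arguably more honest), but it is not a different method.
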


    \begin{proof}
        Denoting the components and structure maps of $T$ as below,
        \[
        \begin{tikzcd}[ampersand replacement=\&]
            T_0  \&
            T_1 \arrow[l, shift left = .75ex, "t"] \arrow[l, shift right = .75ex, "s"'] \&
            T_2 \arrow[l, draw=none, shift right = 2.5ex, "\vdots", "d_1"']  \arrow[l, shift right = 2.5ex] \arrow[l, shift left = 2.5ex, "d_m"]
        \end{tikzcd}
        \]
        we define a presheaf $T'$ on $\Dpm$ as
        \[
        T' = \begin{tikzcd}[ampersand replacement=\&]
            T_0  \&
            T_1 \arrow[l, shift left = .75ex, "t'"] \arrow[l, shift right = .75ex, "s'"'] \&
            T_2, \arrow[l, draw=none, shift right = 2.5ex, "\vdots", "d_1'"']  \arrow[l, shift right = 2.5ex] \arrow[l, shift left = 2.5ex, "d_m'"]
        \end{tikzcd}
        \]
        where the sets of vertices, edges, and tiles are the same as those of $T$. 

        We now proceed to define the new structure maps, beginning with $s',t'$ which either agree with or swap $s,t$ on each edge $e$ individually according to $\tau(e)$.
        \[
        s'(e) \coloneq \left\{\begin{array}{cl}
            s(e) & \textrm{ if } \tau(e) = 1 \\
            t(e) & \textrm{ if } \tau(e) = -1
        \end{array}\right.
        \qquad\qquad
        t'(e) \coloneq \left\{\begin{array}{cl}
            t(e) & \textrm{ if } \tau(e) = 1 \\
            s(e) & \textrm{ if } \tau(e) = -1
        \end{array}\right.
        \]

        Now note that for a tile $x \in T_2$, if $\tau(x) = \delddm$ then we could define $d_i'(x) = d_i(x)$ for all $i$ and satisfy the equations governing how the maps $d'_i$ (restricted only to $x$) must compose with $s',t'$ to form a presheaf on $\Dpm$. This is because $\delddm$ encodes how each directed edge in a $\Dm$-tile must be preserved or reversed in order to produce a $\Dpm$-tile, and by naturality of $\tau$ the element $\tau(x) \in \Lambda_2$ encodes merely whether or not each of the $m$ edges of the tile $x$ are reversed.

        While $\tau(x)$ need not always be equal to $\delddm$, we have demanded that it be the product of $\deldm$ with an element of the orbit of $\deldpm$ under the action of the dihedral group on $\Lambda_2$ from \eqref{action}. Therefore for each $x \in T_2$, there is a group element $\sigma_x \in \Dim$ which $\tau(x) = \deldm \cdot \sigma_x(\deldpm)$.\footnote{This action is only faithful on certain orbits, so there may be many such choices, but considering more careful methods for making those choices is beyond the present scope of this work.}

        We now use the standard action of $\Dim$ on the set $\{1,\ldots,m\}$, in which $r(i) = \ipo$ and $f(i) = m + 1 - i$, in order to define the face maps of $T'$ as
        \[
            d_i'(x) = d_{\sigma_x(i)}(x).
        \]
        In other words, the labels $d_1(x),\ldots,d_m(x)$ on the $m$ edges of the tile $x$ in $T$ are rotated and/or reflected in precisely the manner needed to make the tile $x$ in $T'$, with its edges reversed or preserved according to $\tau$, resemble a directed $m$-gon of the sort modeled by $\Dpm$. 

        More precisely, we have by definition of $d'_i$ and $\sigma_x$ that $\tau(d_i'(x)) = \delddm_i$ for all $x \in T_2$, so that either $s'(d_i'(x))$ or $t'(d_i'(x))$ is $v_i'(x)$ according to $\deldpm_i = \delddm_i \cdot \deldm_i$. Hence $T'$ forms a presheaf on $\Dpm$, which is evidently a directed tiling as its sets of vertices, edges, and tiles are the same as $T$ and the edges of each tile and vertices of each edge are merely permuted compared to those of $T$.
    \end{proof}

    \begin{ex}\label{tauexample}
        Recall example~\ref{5,4TtoLambdaEx}, which describes a map from a presheaf $T$ on $\mathcal{C}_5^\circlearrowright$ to $\Lambda$. Let this map be $\tau$, and let $\mathcal{C}_5$ have relations visualized as in Figure~\ref{Fig:Ex3.9C5Cat}:
        
        \begin{figure}[h]
            \centering
            \includegraphics[scale=.25]{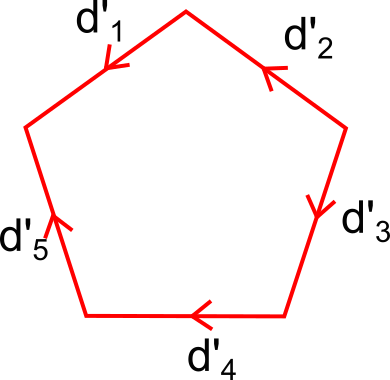}
            \caption{Visualization of a $\mathcal{C}_5$ category.}
            \label{Fig:Ex3.9C5Cat}
        \end{figure}
        
        Then, $\delta^{\mathcal{C}_5^\circlearrowright,\mathcal{C}_5} = (1,1,-1,-1,-1)$. Observe for $x\in T_2$, $\tau(x) \in [\delta^{\mathcal{C}_5^\circlearrowright,\mathcal{C}_5}]_{\Dim}$, then from Theorem~\ref{newtiling}, we are able to construct the presheaf, $T'$, that agrees with $\tau$.
        First observe for any $x \in T_2$, $\tau(x) = (-1,1,1,1,-1)$. Therefore, for $\sigma = rf \in \Dim$, $\sigma \tau(x) = \delta^{\mathcal{C}_5^\circlearrowright,\mathcal{C}_5}$. Then as Theorem~\ref{newtiling} suggests, define $d_i'(x) = d_{\sigma_x(i)}(x)$. Thus we have:
        \[
            d_1' = d_1,\
            d_2' = d_5,\
            d_3' = d_4,\
            d_4' = d_3,\
            d_5' = d_2
        \]
        Since $\tau$ only reverses $d_1$ and $d_5$, then $d_1'$ and $d_2'$ have swapped target and source relations. Thus, we have the following source and target relations:
        \[
            t(d_1') = t(d_2'),\
            s(d_2') = s(d_3'),\
            t(d_3') = s(d_4'),\
            t(d_4') = s(d_5'),\
            t(d_5') = s(d_1')
        \]

    Observe with these new relations and maps we get the following presheaf on $\mathcal{C}_5$ whose elements are the same as those of $T$. As $\tau$ does not depend on choice of tile, there is a unique way to label the edges of each tile.

    \begin{figure}[h]
        \centering
        \includegraphics[scale=.6]{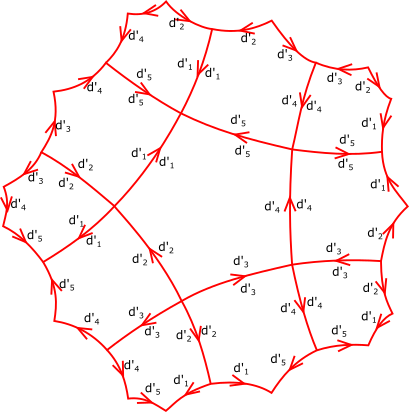}
        \caption{Geometric realization of tiling recovered from $\tau$ described in Example~\ref{5,4TtoLambdaEx}.}
    \end{figure}
    \end{ex}

\section{Reflection Generated Tilings}

    We now have a way to obtain a new alignment from a given one by changing the direction of certain edges specified by a map $\tau \colon T \to \Lambda_{\Dpm}$ for some $\Dpm$. However, defining $\tau$ in general requires an infinite amount of data, namely a choice of $\pm 1$ for each edge in $T$. In this section we introduce a class of edge reversals which can be defined based on only finitely many choices, and as such provide a wide range of examples for directed tilings themselves.

\subsection{The Reflective Tiling}

    We begin by discussing a relatively straightforward alignment which will serve as a convenient ``base tiling'' for constructions of new alignments using \cref{newtiling}. 

    
    \begin{defi}\label{Reflective Tiling}
        A directed tiling $T$ is reflective if for each edge $e$ between adjacent tiles $x_1,x_2$, $e = d_i(x_1) = d_i(x_2)$ for some $i \in \{1,\ldots,m\}$. 
    \end{defi}

    In other words, each edge has the same label in both of the two tiles it lies between. This property in fact implies that all of the labels in those two tiles agree up to ``reflection'' across their shared edge, as we demonstrate with an example.

    \begin{ex}
        For the cyclic square category $\mathcal{C}_4^\circlearrowright$, a reflective $\{4,4\}$ tiling looks like the following, where the number $i$ on each edge indicates that edge is $d_i$ of both tiles it lies between. Note that the labels for each pair of adjacent tiles are reflected across their shared edge, and around each vertex the edge labels alternate between $i$ and ${i+1\;(\mathrm{mod}\;4)}$ for some $i$ between $1$ and $4$.

        \begin{figure}[h]
            \centering
            \includegraphics[scale=.35]{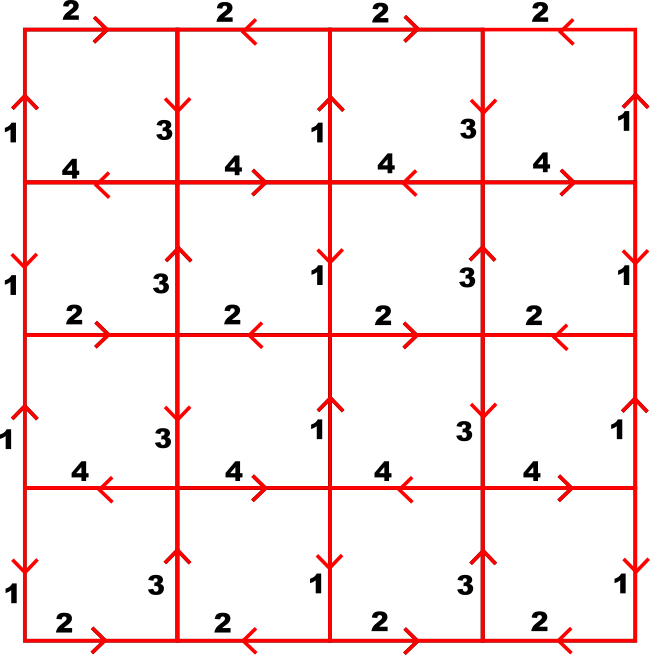}
             \caption{A reflective $\{4,4\}$ tiling }
        \end{figure}
    \end{ex}

    \begin{lemma}
        If an $\{m,n\}$ directed tiling on $\Dm$ is reflective, then $n$ is even.
    \end{lemma}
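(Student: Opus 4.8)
The plan is to argue locally at a single vertex. Fix a vertex $v\in T_0$. By condition (3) of \cref{directedtiling} exactly $n$ edges are incident to $v$, and by the local description of a directed tiling at a vertex used in the proof of the preceding Proposition, the edges and tiles incident to $v$ form a single cyclic chain: edges $e_0,\dots,e_{n-1}$ and tiles $x_1,\dots,x_n$, all indices read mod $n$, with $x_k$ the tile lying between $e_{k-1}$ and $e_k$. To each $e_k$ I will attach a bit $b_k\in\{0,1\}$ and show that it changes across every tile; then $b_0,\dots,b_{n-1}$ is a proper $2$-colouring of the cycle $C_n$, which is possible only when $n$ is even.

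First I would use the reflective hypothesis to produce well-defined data on edges. By the definition of a reflective tiling, $e_k = d^{\ell_k}(x_k) = d^{\ell_k}(x_{k+1})$ for a common index $\ell_k\in\{1,\dots,m\}$, so $\ell_k$ depends only on $e_k$. Since $T$ is a presheaf on the \emph{fixed} category $\Dm$, in $\Dm$ the composite $d^{\ell_k}\circ s$ is one of $v^{\ell_k},v^{\ell_k+1}$ and $d^{\ell_k}\circ t$ is the other; hence $s(e_k)$ and $t(e_k)$ are computed by the same formula whether $e_k$ is viewed inside $x_k$ or inside $x_{k+1}$, so the two endpoints of $e_k$ carry the same $v^{(-)}$-labels in both tiles. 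As $v$ is one of these endpoints, there is a single $b_k\in\{0,1\}$ with $v = v^{\ell_k+b_k}(x_k) = v^{\ell_k+b_k}(x_{k+1})$, the exponent read mod $m$.

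Next I would examine one tile $x_k$. By nonsingularity (condition (1) of \cref{directedtiling}) the $m$ vertices $v^1(x_k),\dots,v^m(x_k)$ are distinct, so $v = v^{j}(x_k)$ for a unique $j$. The defining relation $\{d^i\circ s,d^i\circ t\} = \{v^i,v^{i+1}\}$ shows that $d^i$ is incident to exactly $v^i$ and $v^{i+1}$, so the two edges of $x_k$ meeting $v$ are $d^{j-1}(x_k)$ and $d^{j}(x_k)$; these are precisely the tiling edges $e_{k-1},e_k$, whence $\{\ell_{k-1},\ell_k\} = \{j-1,j\}$ in $\Z/m$. Applying the previous paragraph to $e_{k-1}$ and to $e_k$ (both incident to $x_k$) gives $\ell_{k-1}+b_{k-1}\equiv j\equiv\ell_k+b_k\pmod{m}$, so $\ell_k-\ell_{k-1}\equiv b_{k-1}-b_k\pmod{m}$. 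Since $m\ge 3$ the residues $j-1,j$ are distinct, so $\ell_k\ne\ell_{k-1}$, while $b_{k-1}-b_k\in\{-1,0,1\}$ and these three residues are pairwise distinct mod $m$; hence $b_{k-1}-b_k\not\equiv 0$, i.e.\ $b_{k-1}\ne b_k$. As this holds for every $k$ mod $n$, the bits alternate around the cycle of edges at $v$, forcing $n$ to be even.

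The step I expect to be the crux is the one making $b_k$ genuinely independent of which of the two tiles sharing $e_k$ we read it in: this is exactly where reflectivity (together with the naturality of $s,t$ over the single category $\Dm$) is used, and without it neither $\ell_k$ nor $b_k$ would be meaningful. The remaining care is purely local: identifying the two edges of a tile at $v$ with the consecutively labelled pair $d^{j-1},d^{j}$, and invoking the earlier analysis of a directed tiling near a vertex so that the edges and tiles at $v$ really form a single $n$-cycle rather than several disjoint ones.
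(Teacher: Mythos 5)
Your proof is correct and takes essentially the same route as the paper's: both arguments work locally at a single vertex and show that the reflective edge labels around it alternate between two consecutive indices $d^{j-1},d^{j}$, which forces $n$ to be even. Your version just packages the alternation as a parity bit $b_k$ and spells out more explicitly than the paper does why that bit is well-defined across the two tiles sharing each edge.
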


    \begin{proof}
        Fix a vertex $v$ in the tiling and choose an edge $e_1$ with that vertex as an endpoint. We will assume that $v = s(e_1)$, though if instead $v=t(e_1)$ the exact same argument will apply, and that $e = d_i(x_1) = d_i(x_2)$ for $x_1,x_2$ the two tiles which share $e$ as an edge. 
        
        In each tile $x$ of a presheaf on $\Dm$, the source vertex of $d_i(x)$ is also the source or target of either $d_{\ipo}(x)$ or $d_{i-1\;(\mathrm{mod}\;m)}(x)$; we will assume it is the source of $d_\ipo(x)$ but similarly the same argument applies regardless.

        Therefore, if $v = s(e_1)$ then (under our arbitrary assumptions) we also have $v = s(d_\ipo(x_2))$; we will denote $d_\ipo(x_2)$ by $e_2$. Likewise, if $x_3$ is the second tile with $d_\ipo(x_3) = e_2$ then $s(d_i(x_3)) = v$; we denote $d_i(x_3)$ by $e_3$. Proceeding in this manner we get a sequence of edges $e_1,e_2,e_3,\ldots$ with source $v$ and labels alternating between $i$ and $\ipo$. As there are exactly $n$ edges with source $v$, the alternating pattern of labels means that $n$ must be even as otherwise the edge labels could not continually alternate around $v$.
    \end{proof}

    The following notion of a track will provide the basis for many induction arguments throughout this section.
    
    \begin{defi}
        Given two tiles $x,y \in T_2$ for a reflective tiling $T$, a track from $x$ to $y$ is a finite sequence of tiles 
        $$x=x_0,x_1,...,x_k=y$$
        such that $d_{j_i}(x_i)=d_{j_i}(x_{i-1})$ for $i=1,...,k$ and $j_i \in \{1,...,m\}$ for each $i$. The sequence $j_1,...,j_k$ is called the route of the track and $k$ is called the length.
    \end{defi}

    \begin{prop}\label{reflectiveexists}
        For any $m$-gon category $\Dm$ and any even number $n \ge 4$, there is a reflective directed $\{m,n\}$ tiling on $\Dm$.
    \end{prop}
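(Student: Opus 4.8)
\emph{Proof proposal.} The plan is to build the reflective tiling from the reflection group of the $\{m,n\}$ tiling, using a single tile as fundamental domain. Fix a tile $x_0$ of the geometric $\{m,n\}$ tiling; label its edges $f_1,\dots,f_m$ in cyclic order around $\partial x_0$ and its vertices $v_1,\dots,v_m$ with $v_i$ incident to $f_{i-1}$ and $f_i$, and orient $f_i$ from $v_i$ to $v_{i+1}$ when $\deldm_i=1$ and from $v_{i+1}$ to $v_i$ when $\deldm_i=-1$, so that $x_0$ with this data is a copy of the representable $y(2)$ on $\Dm$. Let $s_i$ denote reflection of the ambient space across the line (geodesic) containing $f_i$. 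Because $n$ is even and $n\ge 4$, each interior angle $2\pi/n=\pi/(n/2)$ of a tile equals $\pi/k$ for the integer $k=n/2\ge 2$; hence by the standard theory of geometric reflection groups (the situation of \cref{mncoxeter} and its Euclidean/spherical analogues, now with the tile itself as strict fundamental domain) the group $W=\langle s_1,\dots,s_m\rangle$ acts on the ambient space with $x_0$ as fundamental domain, and therefore acts simply transitively on the set of tiles via $w\mapsto wx_0$.

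Using this action I would define $T$ by letting $T_0,T_1,T_2$ be the vertices, edges, and tiles of the tiling, with structure maps $d_i(wx_0):=w\cdot f_i$ and $v_i(wx_0):=w\cdot v_i$, and with each edge oriented as follows: every edge $e$ lies on some tile, so $e=w\cdot f_i$ for some $w,i$, and we orient $e$ as the $w$-image of the chosen orientation of $f_i$. The first task is well-definedness. Simple transitivity gives $wx_0=w'x_0\Rightarrow w=w'$, so the $d_i$ and $v_i$ are unambiguous; and if $e=w\cdot f_i=w'\cdot f_j$ comes from the two tiles containing it, then $w'=ws_i$ and $j=i$, so the two candidate orientations of $e$ are $w\cdot o_i$ and $ws_i\cdot o_i$, where $o_i$ is the orientation of $f_i$. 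These coincide because $s_i$ fixes the line of $f_i$ pointwise and hence preserves $f_i$ together with $o_i$. This last point is the crux: it is exactly why imposing the same label on a shared edge in both adjacent tiles is compatible with that edge carrying a single coherent orientation.

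Granting well-definedness, one checks that $T$ is a presheaf on $\Dm$: for $x=wx_0$ the edge $d_i(x)=w\cdot f_i$ has endpoints $w\cdot v_i=v_i(x)$ and $w\cdot v_{i+1}=v_{i+1}(x)$, and its source is $v_i(x)$ exactly when $\deldm_i=1$, since applying the isometry $w$ to the $\Dm$-tile $x_0$ carries all incidences and orientations along; in particular each $wx_0$ is again a $\Dm$-tile. The conditions of \cref{directedtiling} then follow from standard facts about the geometric $\{m,n\}$ tiling: nonsingularity because $f_1,\dots,f_m$ and $v_1,\dots,v_m$ are distinct and $w$ is injective; two tiles meet in at most one edge; each vertex lies on exactly $n$ edges; and $|T|$ is homeomorphic to the ambient space because the geometric realization glues the tiles along shared edges precisely as they sit in the plane or sphere. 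Finally, reflectivity: given adjacent tiles $x_1=wx_0$ and $x_2$ sharing an edge $e$, we have $e=w\cdot f_i$ for a unique $i$, so $e=d_i(x_1)$, while $x_2=ws_i x_0$ and hence $d_i(x_2)=(ws_i)\cdot f_i=w\cdot(s_i f_i)=w\cdot f_i=e$; thus $e=d_i(x_1)=d_i(x_2)$, which is the defining condition of \cref{Reflective Tiling}.

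The step I expect to be the real obstacle is the geometric input that $W$ acts simply transitively on the tiles, i.e.\ that the $\{m,n\}$ tile is a legitimate strict fundamental domain for a reflection group. This is precisely where evenness of $n$ and the bound $n\ge 4$ enter, and it must be quoted from, or proved via, reflection-group theory (e.g.\ Poincar\'e's polygon theorem), uniformly across the hyperbolic, Euclidean, and spherical cases. Everything else is bookkeeping once the orientation-compatibility observation above is in place.
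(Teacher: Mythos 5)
Your proof is correct, and it rests on the same geometric engine as the paper's: the Coxeter group generated by reflections in the edges of a base tile acts simply transitively on the tiles (this is exactly where evenness of $n$ enters, via the interior angle $2\pi/n=\pi/(n/2)$), and the base tile's labels and directions are propagated to every tile by that action. Where you differ is in the execution. The paper first builds the reflective tiling over the cyclic category $\Dmc$, using the sign homomorphism $W_{m,n}\to\{1,-1\}$ to two-colour the tiles and thereby decide which are oriented clockwise versus counterclockwise, assigning edge labels by induction on the word length of the element carrying $x_0$ to a given tile, and only then transfers to a general $\Dm$ by feeding the constant edge-reversal $\tau(x)=\deldm$ into \cref{newtiling}. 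You instead define labels, vertices, and orientations in one equivariant stroke, $d_i(wx_0)=w\cdot f_i$, directly over $\Dm$, and reduce every consistency check to two facts: the action on tiles is free (so $w$ is determined by its tile), and the reflection $s_i$ fixes $f_i$ pointwise (so the two tiles sharing an edge induce on it the same label and the same orientation). This collapses the paper's track induction, its two-colouring, and its appeal to \cref{newtiling} into a single well-definedness argument, and it makes reflectivity, $d_i(wx_0)=d_i(ws_ix_0)$, an immediate computation; the trade-off is that the paper's detour through $\Dmc$ exercises machinery it reuses later, while your version leans a bit more directly on the geometric fact that the $W_{m,n}$-translates of $x_0$ are precisely the tiles of the $\{m,n\}$ tiling, which both arguments ultimately import from \cref{mncoxeter} and its Euclidean and spherical analogues. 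Your treatment of the conditions of \cref{directedtiling} is no less detailed than the paper's, so there is no gap on that front either.
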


    In the cyclic case, the reflective tiling is defined to have the $n$ edges at each vertex alternate between incoming and outgoing edges, with each edge having the same label $d_i$ for both of the tiles that share it for some $i$. For more general directed $m$-gons, the edge directions and labels are similarly determined by reflection across each edge as illustrated in the picture below.

    \begin{figure}[h]
        \centering
        \includegraphics[scale=.6]{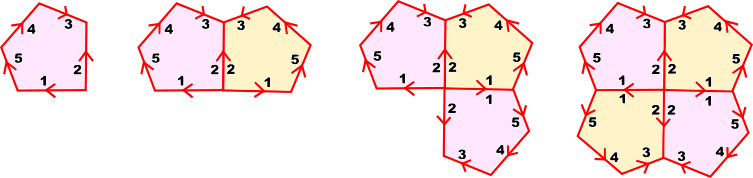}
        \caption{First, we reflect over the edge labeled 2, then over the edge of the new tile labeled 1. This third tile has been reflected twice, so is now the same as the original tile. Reflecting for a third time, the tile must line up with the first tile, because of the odd number of reflections. Here, $n=4$, but this strategy works for all even $n$, and for all starting $\Dm$ categories. We can continue this process around any of the new vertices, and it will stay consistent, thus generating an infinite reflective tiling.}
    \end{figure}

    While we include a technical proof of this claim, we warn the reader that it does not provide any intuition beyond this informal and visual description.

    \begin{proof}
        First consider the cyclic $m$-gon category $\Dmc$ and let $n \geq 4$ be even.

        Consider $(W_{m,n},S_m)$ the Coxeter system representing the reflections of the $\{m,n\}$ tiling as introduced in \cref{mncoxeter}, and choose a ``base tile'' $x_0$ in the undirected $\{m,n\}$ tiling. The generators $s_1,...,s_m$ of the Coxeter group $W_{m,n}$ can be represented as the reflections across the geodesics going through the edges $d_1(x_0),...,d_m(x_0)$ respectively. Each element $w$ of the Coxeter group acts on the undirected tiling and sends $x_0$ to a tile $w(x_0)$, and as this action on tiles is free there is a unique element $w$ for each tile $x$. 
        
        Observe that there is a group homomorphism, $\epsilon \colon W_{m,n} \to \{1,-1\}$ mapping each $s_i$ to $-1$. This induces a \{1,-1\}-coloring of our tiles where each tile has the opposite color to all of those adjacent to it. We can now set the edge directions by demanding that in the tiles colored by a 1 the edges point clockwise around the tile, and in the tiles colored by a -1 the edges point counterclockwise. This is consistent as if an edge points clockwise around one tile it points counterclockwise around the other tile adjacent to it and vice versa.

        It remains now only to determine the labels $d_1,...,d_m$ of the edges on each tile. Consider a tile $x$ and let $w$ be the element of $W_{m,n}$ with $x = w(x_0)$; $w$ can be written as a product of generators $s_{i_1} \cdots s_{i_k}$, though not necessarily uniquely. We will proceed by induction on $k$.
        
        As a base case, choose any clockwise labeling of the edges of $x_0$ by $d_1,...,d_m$. Assume that every tile with a track of length $k$ or less from $x_0$ has its edges labeled such that each edge has the same label in both adjacent tiles, as required by the definition of a reflective tiling. Let $x$ be a tile with corresponding Coxeter group element $w$ where $w = s_{i_1} \cdots s_{i_{k+1}}$. If $x$ has a track of length $k$ or less from $x_0$, then its edges are already labeled by assumption. Otherwise, there is a track from $x_0$ to $x$ with route $i_1,...,i_{k+1}$, where the route is determined by the existing edge labels and $x$ must be adjacent to the tile $s_{i_1} \cdots s_{i_k}(x_0)$ along its edge labeled $d_{i_{k+1}}$. We then must label this edge as $d_{i_{k+1}}$ in the tile $x$, which based on the coloring of $x$ determines the remaining edge labels in $x$ in a clockwise or counterclockwise manner. That this construction does not depend on the choice of factorization of $w$ is a consequence of the Coxeter relations.

        Now consider any $m$-gon category $\Dm$, and let $T$ be the reflective tiling on $\Dmc$ defined above. There is a map $\tau \colon T \to \Lambda$ of presheaves on $\Dmc$ given on tiles by $\tau(x) = \delta^{\Dm}$, where $\delta^{\Dm}$ (\cref{deltacm}) denotes how to reverse edges in an $m$-gon with clockwise cyclic edge directions to get an $m$-gon with edge directions determined by the category $\Dm$. This is well defined by the reflective property of edge labels in $T$, and by definition $\tau$ factors through $\Lambda_{\Dm}$. By \cref{newtiling}, we get a directed $\{m,n\}$ tiling on $\Dm$ with the same reflective edge labels in each vertex but with the necessary edges reversed to make each tile of the form given by $\Dm$.
    \end{proof}

    Note that while this construction is not unique, as it relied on a choice of base tile and a choice of how to label the edges of that tile, it is unique up to isomorphism. This is because the different edge labelings of the base tile agree up to rotation and any other choice of base tile would at worst require a reflection to swap the tiles colored with a 1 and those colored with a -1.

\subsection{Reversal-Closed Transformations}

Our goal in this section is to generate new directed tilings by reversing edges in the reflective tiling in a recursive manner. In essence, this involves identifying when applying an edge-reversal code $\delta$ to the edges of a $\Dm$-tile results in a directed $m$-gon isomorphic to a $\Dm$-tile. In other words, when is it the case that $\deldm \cdot \delta = \sigma(\deldm)$ for some $\sigma \in \Dim$?

This question is immediately answered by the set $\{\deldm \cdot \sigma(\deldm) | \sigma \in \Dim\} \subset \{1,-1\}^m$, but crucially this subset is not preserved by replacing $\deldm$ with $\sigma(\deldm)$. This raises a second, more challenging question: when is it the case that $$\deldm \cdot \sigma_1(\deldm) \cdot \sigma_2(\deldm)$$ agrees with $\sigma_3(\deldm)$ for some $\sigma_3 \in \Dim$? This scenario represents when an edge-reversal code $\deldm \cdot \sigma_1(\deldm)$ can be applied to a directed $m$-gon with edge directions $\sigma_2(\deldm)$ in the orbit of $\deldm$ under the dihedral action, and still remain in that orbit.

The purpose of this question is to identify sets of edge reversal codes which can be applied to $\deldm$ repeatedly while always producing $\Dm$-tiles (as in, never leaving orbit of $\deldm$). This motivates the following definition.

\begin{defi}
    A subset $\Gamma$ of the dihedral group $\Dim$ is \emph{reversal-closed} for $\delta \in \{1,-1\}^m$ if for all $\sigma_1,\sigma_2 \in \Gamma$, there exists at least one $\sigma_3 \in \Gamma$ such that $$\delta \cdot \sigma_1(\delta) \cdot \sigma_2(\delta) = \sigma_3(\delta).$$

    We write $\Lambda^\Gamma_2$ for the set $\{\delta \cdot \sigma(\delta) | \sigma \in \Gamma\}$ of edge reversal codes given by limiting the orbit of $\delta$ under the action of $\Dim$ to the subset $\Gamma$.
\end{defi}

As every element of the group $\{1,-1\}^m$ has order 2, this condition is equivalent to demanding that $\sigma_1(\delta) \cdot \sigma_2(\delta) \cdot \sigma_3(\delta) = \delta$ with the same quantifiers for $\sigma_1,\sigma_2,\sigma_3$.

\begin{ex}
    For $\delta = (1,1,1,-1,-1)$, the subset $\Gamma = \{e,f,rrr,frrr\}$ is reversal-closed as
    \[
    e(\delta) \cdot \sigma(\delta) \cdot \sigma(\delta) = \delta 
    \qquad\textrm{and}\qquad f(\delta) \cdot rrr(\delta) \cdot frrr(\delta) = \delta,
    \]
    where the first equation holds for any $\sigma \in \Gamma$. These two equations (coupled with commutativity of multiplication in $\{1,-1\}^m$) suffice to show that $\Gamma$ is reversal-closed.

    In this case $\Gamma$ is maximal, but for any $\delta$ if we have the equation $\sigma_1(\delta) \cdot \sigma_2(\delta) \cdot \sigma_3(\delta) = \delta$, then the set $\{e,\sigma_1,\sigma_2,\sigma_3\}$ is reversal-closed. It is not necessarily maximal however, and building reversal-closed subsets based on multiple such equations is more challenging as adding additional elements may introduce conflicts.
\end{ex}

The set $\Lambda^\Gamma_2$ is defined so that the edge-reversal codes it contains can be applied to $\delta$ arbitrarily many times without leaving the orbit of $\delta$ under $\Dim$, or moreover under the subset $\Gamma$.

\begin{lemma}\label{gammareuse}
    If $\Gamma$ is a reversal-closed subset for $\delta$, then for any $\tau \in \Lambda^\Gamma_2$ and $\sigma \in \Gamma$ there exists $\sigma' \in \Gamma$ such that $\sigma(\delta) \cdot \tau = \sigma'(\delta)$.
\end{lemma}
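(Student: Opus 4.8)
The plan is to unwind the definitions and reduce the claim to a single application of the reversal-closed condition. Fix $\tau \in \Lambda^\Gamma_2$ and $\sigma \in \Gamma$. By definition of $\Lambda^\Gamma_2$, there is some $\sigma_1 \in \Gamma$ with $\tau = \delta \cdot \sigma_1(\delta)$. Substituting, the left-hand side becomes $\sigma(\delta) \cdot \delta \cdot \sigma_1(\delta)$. Since multiplication in $\{1,-1\}^m$ is commutative, this equals $\delta \cdot \sigma(\delta) \cdot \sigma_1(\delta)$, which is exactly the expression appearing in the reversal-closed condition with $\sigma_1 \mapsto \sigma$ and $\sigma_2 \mapsto \sigma_1$.

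Now invoke the hypothesis that $\Gamma$ is reversal-closed for $\delta$: applied to the pair $\sigma, \sigma_1 \in \Gamma$, it yields some $\sigma' \in \Gamma$ with $\delta \cdot \sigma(\delta) \cdot \sigma_1(\delta) = \sigma'(\delta)$. Chaining the two equalities gives $\sigma(\delta) \cdot \tau = \sigma'(\delta)$, which is what we wanted. I would write this out in one short display, being careful to note where commutativity of $\{1,-1\}^m$ is used, since that is the only nontrivial rearrangement.

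I do not expect any real obstacle here: the lemma is essentially a bookkeeping restatement of the reversal-closed axiom once $\tau$ is expanded via its defining form, and the only subtlety is making sure the roles of $\sigma_1$ and $\sigma_2$ in the definition are matched correctly after the commutative shuffle. One minor point worth stating explicitly is that every element of $\{1,-1\}^m$ has order two (noted earlier in the excerpt), which is implicitly what lets the definition of reversal-closed be phrased symmetrically — but for this particular proof only commutativity is actually needed. The argument is a few lines at most.
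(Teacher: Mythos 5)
Your argument is correct and is exactly the paper's proof: expand $\tau = \delta \cdot \sigma_1(\delta)$ via the definition of $\Lambda^\Gamma_2$, commute the factors into the form $\delta \cdot \sigma(\delta) \cdot \sigma_1(\delta)$, and apply the reversal-closed condition to produce $\sigma'$. The paper states this in one line without flagging the commutativity rearrangement, which you correctly identify as the only (mild) subtlety.
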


\begin{proof}
    Directly from the definitions, $\tau = \delta \cdot \sigma''(\delta)$ for some $\sigma'' \in \Gamma$, and $\sigma(\delta) \cdot \delta \cdot \sigma''(\delta)$ agrees with $\sigma'(\delta)$ for some $\sigma' \in \Gamma$.
\end{proof}

We describe additional properties and examples of reversal-closed subsets in \cref{appendixreversal}.

In order to use a reversal-closed subset of permutations for $\deldpm$ and the associated edge-reversal codes to determine how to reverse edges of a reflective tiling in a systematic way, we will make use of a restriction of the presheaf $\Lambda$ based on a reversal-closed subset.

\begin{defi}
    For $m$-gon categories $\Dm,\Dpm$ and a reversal closed subset $\Gamma$ for $\deldpm$, we define the presheaf $\Lambda^\Gamma$ by the diagram 
    \[
        \Lambda^\Gamma \coloneq
        \begin{tikzcd}[ampersand replacement=\&]
            *  \&
            \{-1,1\} \arrow[l, shift left = .75ex] \arrow[l, shift right = .75ex] \&
            \left\{\deldpm \cdot \sigma(\deldpm) \middle| \sigma \in \Gamma\right\}.  \arrow[l, draw=none, shift right = 3ex, "\vdots"]  \arrow[l, shift right = 2.5ex, "\pi_1"'] \arrow[l, shift left = 2.5ex, "\pi_m"]
        \end{tikzcd}
    \]
\end{defi}

\subsection{Local Reflection Schema}


We've discussed how to globally change one directed tiling into another, but now we illustrate how to describe this change more locally, from one tile to its adjacent tiles. We fix throughout an $m$-gon category $\Dm$.

    \begin{defi}
        $\P$ will denote the presheaf on $\Dm$ consisting of a single central tile and all of its adjacent tiles, with edges labeled as if it were in a reflective tiling on $\Dm$.

        More concretely, $\P_2 = \{s_0,s_1,\ldots,s_m\}$ with $d_i(s_i) = d_i(s_0)$ for $i=1,\ldots,m$.
    \end{defi}

    \begin{ex}
        Below left is a picture of $\P$ over the cyclic square category $\mathcal{C}_4^\circlearrowright$, and below right is a picture of $\P$ over a non-cyclic pentagon category.
        \begin{figure}[h]
            \centering
            \includegraphics[height=4cm]{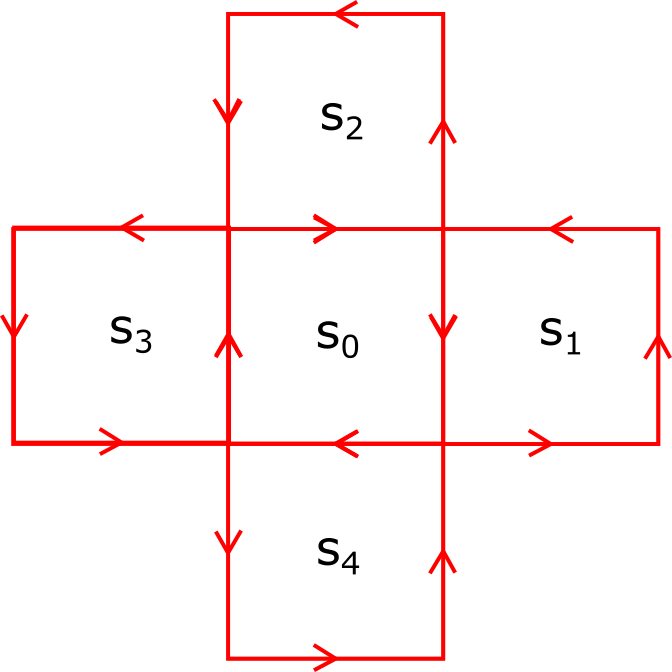}
            \qquad\qquad
            \includegraphics[height=4cm]{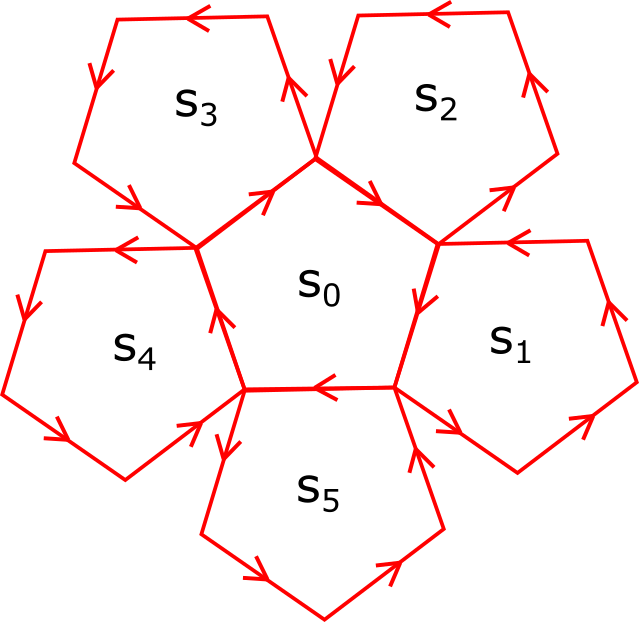}
            \caption{Examples of $\P$ for a square and pentagon category, labeled with $\{s_i\}$}
        \end{figure}
    \end{ex}
    
    Based on the definition of $\P$ we can observe its most important property: for $T$ a reflective tiling on $\Dm$ and $x \in T_2$ any tile, there is a unique morphism of presheaves $\bar x \colon \P \to T$ with $\bar x(s_0) = x$.

    \begin{figure}[h]
        \centering
        \includegraphics[width=0.5\linewidth]{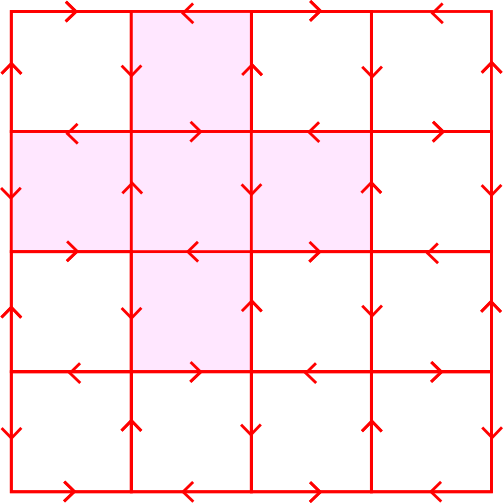}
        \caption{Depiction of $\bar x \colon \P \to T$ with $\bar x(s_0) = x$, with $x$ the center pink tile}
        \label{fig:xbarontiling}
    \end{figure}


    We can also extend maps to $\Lambda$ from the center tile $s_0$ to all of $\P$ by ``reflection.''

    \begin{defi}
        For $f \colon y(2) \to \Lambda$, the morphism $\bar f \colon \P \to \Lambda$ is defined by $\bar f(s_i) = f$ for $i=0,1,\ldots,m$.
    \end{defi}

    $\bar f$ can be regarded as a canonical extension of $f$ to $\P$ along the morphism $s_0 \colon y(2) \to \P$. 

    
    We wish to use $\P$ to describe a choice of how to reverse the edges in the tiles adjacent to a specified tile in a directed tiling, relative to both the reflective tiling and a separate choice of edge-reversals for the center tile,\footnote{Separating the edge-reversal of the center tile from its relationship with the edge-reversals of its adjacent tiles is not technically necessary, but vastly simplifies the definition of both a reflection scheme and the resulting reflection-generated tilings.} in such a way that extends the directed $m$-gon type of the center tile to the adjacent tiles. 

    \begin{defi}\label{reflectionscheme}
        For $m$-gon categories $\Dm,\Dpm$, a reversal-closed subset $\Gamma$ for $\deldpm$, and a number $n > 0$ divisible by 4, a \emph{degree $n$ $\Gamma$-reflection scheme} over $\Dm$ is a morphism of presheaves $\phi \colon \P \to \Lambda^\Gamma$ such that $\phi(s_0) = (1,\ldots,1)$.
    \end{defi}

        
    Requiring that $\phi$ lands in $\Lambda^\Gamma$ will ensure that the surrounding tiles preserve the $m$-gon type of the center tile when we use $\phi$ to construct directed tilings. The condition on $\phi(s_0)$ is for convenience to make $\phi$ describe a choice of edge-reversals of the surrounding tiles relative to a choice of edge-reversal of the center tile. 

    \begin{ex}
    In Figure~\ref{fig:phipentagonexample}, we display an example of a reflection scheme for the pentagon category $\Dm$ with $\deldm = (-1,-1,-1,1,1)$. Here, $\phi(s_0)=(1,1,1,1,1)$,  $\phi(s_1)=(1,1,-1,-1,-1)$, $\phi(s_2)=(1,1,1,1,1)$, $\phi(s_3) = \phi(s_4) = \phi(s_5)=(-1,-1,1,1,1)$.   The black arrows indicate the ones changed by the $\phi_i's$. We can also observe the dihedral actions that each $\phi$ map represents: $\phi_1$ is $r$, $\phi_2$ is $e$, $\phi_3, \phi_4,\phi_5$ are $r^2$. We can check that these form a reversal closed subset.
     \begin{figure}[h]
        \centering
        \includegraphics[width=0.9\linewidth]{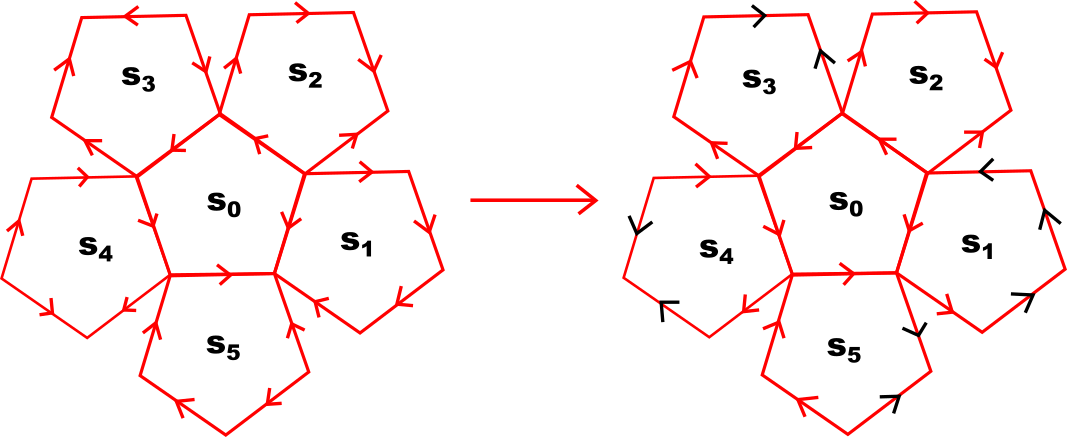}
        \caption{An example of $\phi$ on a $\mathcal{C}_5$ category.}
        \label{fig:phipentagonexample}
    \end{figure}
  
    \end{ex}


    Recall from \cref{groupstructure} that $\Lambda_2$ forms an abelian group under multiplication (namely $(\mathbb{Z}/2)^m$) in which every element has order 2. The following properties are immediate consequences of this. 

    \begin{lemma}\label{basicproperties}
       For all $i,j$, $\phi(s_i)^2= (1, \ldots , 1)$, and $\phi(s_i) \cdot \phi(s_j) = \phi(s_j) \cdot \phi(s_i)$.
    \end{lemma}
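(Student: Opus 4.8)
The plan is to observe that this lemma is purely a statement about the ambient abelian group $\Lambda_2$, and contains no information specific to $\phi$ beyond the fact that $\phi(s_i)$ is an element of $\Lambda_2$ for each $i$. So the first step is to note that, by definition of a reflection scheme (\cref{reflectionscheme}), $\phi \colon \P \to \Lambda^\Gamma$ is a map of presheaves, and in particular $\phi(s_i) \in \Lambda^\Gamma_2 \subseteq \Lambda_2 = \{1,-1\}^m$ for every $i = 0,1,\ldots,m$.

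Next I would invoke \cref{groupstructure}: $\Lambda_2 = \{1,-1\}^m$ is an abelian group under componentwise multiplication, isomorphic to $(\mathbb{Z}/2)^m$. From this, both claims follow by elementary group theory. For the first, every component of $\phi(s_i)$ is $\pm 1$, and $(\pm 1)^2 = 1$, so $\phi(s_i)^2 = (1,\ldots,1)$; equivalently, every element of $(\mathbb{Z}/2)^m$ has order dividing $2$. For the second, commutativity of multiplication in $\{1,-1\}^m$ is immediate from commutativity of multiplication in $\{1,-1\}$ in each coordinate, so $\phi(s_i) \cdot \phi(s_j) = \phi(s_j) \cdot \phi(s_i)$.

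There is no real obstacle here; the lemma is recorded only because these two identities are used repeatedly in the sequel, so it is convenient to state them once. The only thing to be mildly careful about is that the multiplication $\cdot$ referred to in the statement is the group operation on $\Lambda$ as a presheaf (from \cref{groupstructure}), evaluated on the tile component, rather than any operation coming from the dihedral action — but since the two elements being multiplied both lie in $\Lambda_2$, this is exactly componentwise multiplication and the identifications above apply verbatim.
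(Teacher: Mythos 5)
Your proposal is correct and matches the paper exactly: the paper gives no separate proof environment, simply noting in the preceding sentence that these identities are immediate consequences of \cref{groupstructure}, i.e.\ that $\Lambda_2 = \{1,-1\}^m$ is an abelian group under componentwise multiplication in which every element has order dividing $2$. Your extra remarks (that $\phi(s_i)$ lands in $\Lambda^\Gamma_2 \subseteq \Lambda_2$ and that the operation is the presheaf group structure rather than the dihedral action) are accurate clarifications but do not change the argument.
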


    \begin{lemma}\label{coxeterhomomorphism}
        A degree $n$ reflection scheme $\phi$ can be equivalently interpreted as a group homomorphism $W_{m,n} \to \Lambda_2$ from the Coxeter group, where $\phi(s_i)$ suggestively denotes the image of the generator $s_i$ of $W_{m,n}$.
    \end{lemma}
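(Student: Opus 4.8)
The plan is to invoke the universal property of the Coxeter presentation. Recall from \cref{mncoxeter} and the discussion following it that $W_{m,n}$ is the group with generators $s_1,\dots,s_m$ and defining relations $s_i^2 = e$, together with $s_is_j = s_js_i$ whenever $i,j$ are non-adjacent on the $m$-cycle and $(s_is_j)^{n/2} = e$ whenever $i,j$ are adjacent. Hence a group homomorphism $W_{m,n}\to\Lambda_2$ is exactly a choice of elements $g_1,\dots,g_m\in\Lambda_2$ satisfying these same relations, and the lemma amounts to saying that a degree $n$ $\Gamma$-reflection scheme $\phi$ encodes precisely such a choice, with $g_i=\phi(s_i)$.

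First I would record that a reflection scheme is determined by the tuples $\phi(s_1),\dots,\phi(s_m)$. Indeed $\P_2=\{s_0,s_1,\dots,s_m\}$; the condition $\phi(s_0)=(1,\dots,1)$ from \cref{reflectionscheme} fixes the center tile, the vertex level of $\phi$ is forced since $\Lambda^\Gamma$ has a single vertex element, and naturality of $\phi$ with respect to $d^i\colon 1\to 2$ forces $\phi$ on the edge $d_i(s_j)$ to equal the coordinate $\pi_i(\phi(s_j))$. Naturality along the shared edge $d_i(s_i)=d_i(s_0)$ further imposes $\pi_i(\phi(s_i))=\pi_i(\phi(s_0))=1$; I would note this simply becomes a constraint on the homomorphism-data, not an obstacle to the correspondence. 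So it remains to check that the assignment $s_i\mapsto\phi(s_i)\in\Lambda^\Gamma_2\subseteq\Lambda_2$ respects the Coxeter relations.

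Next I would verify the three families of relations using \cref{basicproperties} and \cref{groupstructure}, which say that $\Lambda_2\cong(\Z/2)^m$ is abelian with every element of order $2$. The relation $s_i^2=e$ is immediate, and $s_is_j=s_js_i$ for non-adjacent $i,j$ holds because $\Lambda_2$ is abelian. For adjacent $i,j$ we have $(\phi(s_i)\phi(s_j))^2=\phi(s_i)^2\phi(s_j)^2=(1,\dots,1)$, so $(\phi(s_i)\phi(s_j))^{n/2}=\bigl((\phi(s_i)\phi(s_j))^2\bigr)^{n/4}=(1,\dots,1)$; this is where the hypothesis $4\mid n$ enters, guaranteeing $n/2$ is even. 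This produces the desired homomorphism $W_{m,n}\to\Lambda_2$. Conversely, any such homomorphism $h$ recovers a reflection scheme by setting $\phi(s_i)=h(s_i)$, $\phi(s_0)=(1,\dots,1)$, and the edge values as above, valid exactly when each $h(s_i)$ lies in $\Lambda^\Gamma_2$ and meets the naturality constraint $\pi_i(h(s_i))=1$; this establishes the claimed equivalence of data.

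The verification is routine; the one point that genuinely requires care, and precisely the reason \cref{reflectionscheme} demands $4\mid n$, is the braid-type relation $(s_is_{i+1})^{n/2}=e$: for odd $n/2$ one would only get $(\phi(s_i)\phi(s_{i+1}))^{n/2}=\phi(s_i)\phi(s_{i+1})$, which is not forced to be trivial. Everything else follows formally from the abelian group structure on $\Lambda_2$.
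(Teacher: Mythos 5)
Your proof is correct and follows essentially the same route as the paper's: verify that the assignment $s_i\mapsto\phi(s_i)$ respects the Coxeter relations, with the only nontrivial check being the braid relation $(s_is_{\ipo})^{n/2}=e$, which holds because every element of $\Lambda_2$ has order $2$ and $n/2$ is even. You are in fact more thorough than the paper, which only verifies the braid relation and omits both the involution/commutation checks and the discussion of the converse direction (including the naturality constraint $\pi_i(\phi(s_i))=1$), so nothing is missing.
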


    \begin{proof}
        To show that this assignment on the generators $s_i$ extends to a homomorphism, it suffices to show that $$\left(\phi(s_i)\cdot\phi(s_{\ipo})\right)^{n/2} = (1,\ldots,1)$$
        for all $i=1,...,m$. But as $n$ is presumed to be divisible by 4, this will always be true as $n/2$ is even.
    \end{proof}

    There is nothing inherently important about $n$ being divisible by 4 as opposed to simply even, but we illustrate why we discard the case of $n = 2$ (mod 4) for not having any nontrivial examples.

    \begin{lemma}
        Any homomorphism $\phi \colon W_{m,n} \to \Lambda_2$ for $n = 2$ (mod 4) is constant at the identity. 
    \end{lemma}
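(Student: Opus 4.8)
The plan is to read $\phi$ through \cref{coxeterhomomorphism} as a homomorphism $W_{m,n}\to\Lambda_2$ that still remembers the underlying morphism of presheaves $\P\to\Lambda$ with $\phi(s_0)=(1,\dots,1)$, and to play these two structures against each other: the group relations will force all the values $\phi(s_i)$ to agree, and the presheaf relations will then force that common value to be trivial. It is worth noting up front that the purely group-theoretic assertion ``every homomorphism $W_{m,n}\to\Lambda_2$ is trivial'' is \emph{false} --- for instance $s_i\mapsto(-1,\dots,-1)$ for all $i$ respects every relation of the form $(s_is_j)^k=e$ as well as $s_i^2=e$, and is not constant at the identity --- so the normalization $\phi(s_0)=(1,\dots,1)$ together with naturality is genuinely needed.

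First I would extract the presheaf constraint. The map $\Lambda_2\to\Lambda_1$ induced by $d^i$ is the projection $\pi_i$, so applying naturality of $\phi\colon\P\to\Lambda$ to the defining relations $d_i(s_i)=d_i(s_0)$ of $\P$ yields $\pi_i(\phi(s_i))=\pi_i(\phi(s_0))=1$ for every $i=1,\dots,m$; that is, the $i$-th coordinate of $\phi(s_i)$ equals $1$.

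Next I would use the Coxeter relations. For each $i$ the generators $s_i$ and $s_\ipo$ are adjacent in the presentation $m$-cycle, so $(s_is_\ipo)^{n/2}=e$ in $W_{m,n}$ and hence $\big(\phi(s_i)\phi(s_\ipo)\big)^{n/2}=(1,\dots,1)$. Since every element of $\Lambda_2\cong(\Z/2)^m$ squares to the identity and $n/2$ is odd (as $n\equiv 2\pmod 4$), raising to the $n/2$ power is the identity map on $\Lambda_2$, so $\phi(s_i)\phi(s_\ipo)=(1,\dots,1)$ and thus $\phi(s_i)=\phi(s_\ipo)$. Running this around the cycle, including the pair $s_m,s_1$, gives $\phi(s_1)=\dots=\phi(s_m)=:a$. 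Finally, the presheaf constraint says $\pi_i(a)=\pi_i(\phi(s_i))=1$ for every $i$, forcing $a=(1,\dots,1)$, so $\phi$ is constant at the identity. The step I expect to carry the content is precisely this last combination: neither the group relations (which cannot see $\phi(s_0)$) nor the presheaf relations (which only relate $\phi(s_i)$ to $\phi(s_0)$ in the $i$-th coordinate) suffice alone, but together they pin everything down; the individual computations in $\Lambda_2$ are routine.
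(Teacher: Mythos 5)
Your proof is correct and follows essentially the same route as the paper's: the relation $\left(\phi(s_i)\cdot\phi(s_\ipo)\right)^{n/2}=(1,\dots,1)$ with $n/2$ odd and every element of $\Lambda_2$ of order $2$ forces $\phi(s_i)=\phi(s_\ipo)$ around the cycle, and the fact that the $i$-th coordinate of $\phi(s_i)$ equals $1$ (which the paper cites as ``by definition'' and you derive explicitly from naturality of $\P\to\Lambda$ and $\phi(s_0)=(1,\dots,1)$) then forces the common value to be trivial. Your observation that the purely group-theoretic statement is false without the reflection-scheme normalization is a fair caveat about how the lemma is phrased, but it does not change the substance of the argument.
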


    \begin{proof}
        By definition, $\left(\phi(s_i) \cdot \phi(s_{i+1})\right)^{n/2}= (1, \ldots , 1)$. By \cref{basicproperties}, we can rearrange this equation to get 
        \[
            \phi(s_i)^{\frac{n}{2}-1} \cdot \phi(s_{i+1})^{\frac{n}{2}-1} \cdot \phi(s_i) \cdot \phi(s_{i+1}) = (1, \ldots , 1).
        \]
        As $\frac{n}{2}-1$ is even, by the previous lemma this equation reduces to 
        \[
            \phi(s_i) \cdot \phi(s_{i+1}) = (1, \ldots , 1).
        \]
        Again as each $\phi(s_i)$ has order 2 in $\Lambda$, this shows that $\phi(s_i) = \phi(s_{i+1})$ and hence that $\phi(s_i) = \phi(s_j)$ for all $i,j \in \{1, \ldots , m\}$. By definition, the $i$-th component of $\phi(s_i)$ is 1, so we can conclude that $\phi(s_i) = (1, \ldots , 1)$ for all $i$.
    \end{proof}

\subsection{Reflection generated edge-reversals}

    Given a reflection scheme $\phi$, we now describe what it means for an edge-reversal on a reflective tiling to be generated by $\phi$.

    \begin{defi}\label{def:phigenerated}
        For $\phi$ a degree $n$ $\Gamma$-reflection scheme on $\Dm$, an edge-reversal $\tau \colon T \to \Lambda$ on a reflective $\{m,n\}$ tiling $T$ is $\phi$-generated if for every tile $x \in T_2$, $\tau \circ \bar x = \phi \cdot \overline{\tau(x)}$.
        
        In other words, the right side of the diagram in \eqref{eqn.phigen} commutes.
    \begin{equation}\label{eqn.phigen}
    \begin{tikzcd}[ampersand replacement=\&]
        y(2) \arrow[rr,"s_0"] \arrow[dr,"x"] \arrow[ddr, bend right, "\tau(x)"'] \&\& \P \arrow[ld, "\bar x"'] \arrow[ddl, bend left, "\phi \cdot \overline{\tau(x)}"] \\
        \& T \arrow[d,"\tau"]\& \\
        \& \Lambda \&
    \end{tikzcd}
    \end{equation}
    \end{defi}

    \begin{ex}
        We now provide an example of a $\phi$-generated tiling. First consider $T$, the reflective tiling on $\mathcal{C}_4^\circlearrowright$, as seen in Figure~\ref{fig:reflective/TranslationTiling}. We then define an edge reversal, $\tau:T \to \Lambda$, visualized in Figure~\ref{fig:reflective/TranslationTiling}. Here we see for $e\in T_1$, $\tau(e) = 1$, if the edges are directed in the same manner, and $-1$ otherwise.

        \begin{figure}[h]
            \begin{minipage}{.5\textwidth}
                \centering
                \includegraphics[width=.7\textwidth]{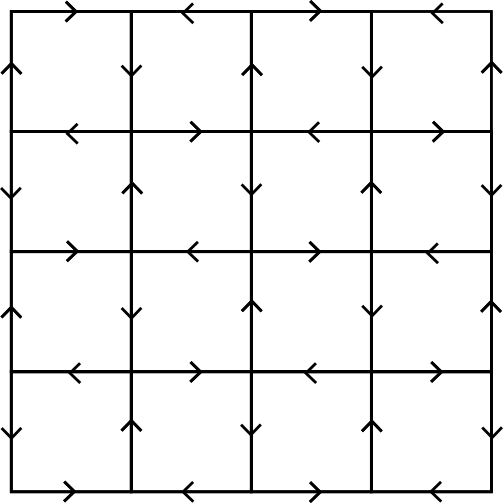}
            \end{minipage}%
            \begin{minipage}{.5\textwidth}
                \centering
                \includegraphics[width=.7\textwidth]{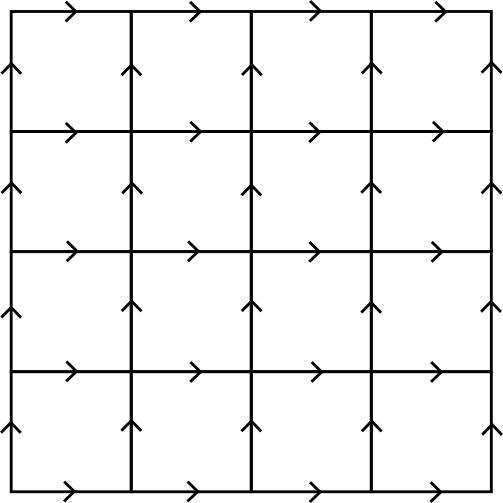}
            \end{minipage}%
            \caption{Visualization of $T:(\mathcal{C}_4^\circlearrowright)^{op} \to \mathrm{Set}$ (left), and an edge-reversal map $\tau:T\to \Lambda$ (right), corresponding to a translation-invariant tiling.}
            \label{fig:reflective/TranslationTiling}
        \end{figure}

        Next, we choose $x:y(2) \to T$ to be the central tile shaded in Figure~\ref{fig:xAndxbar}. Moreover, the other shaded tiles in Figure~\ref{fig:xAndxbar} displays the extension, $\overline{x}$.
        \begin{figure}[h]
            \begin{minipage}{.5\textwidth}
                \centering
                \includegraphics[width=.7\textwidth]{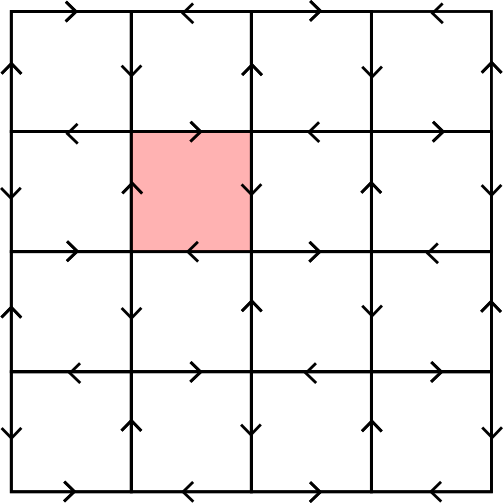}
            \end{minipage}%
            \begin{minipage}{.5\textwidth}
                \centering
                \includegraphics[width=.7\textwidth]{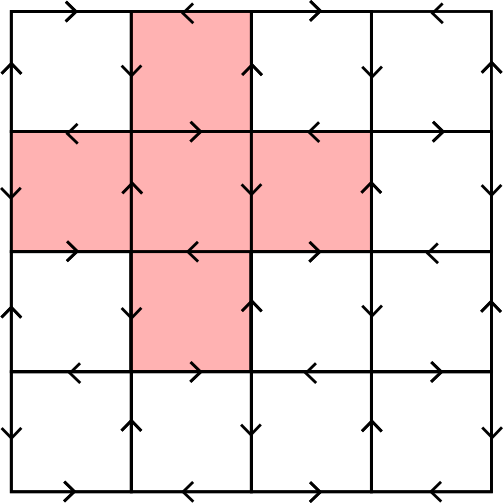}
            \end{minipage}%
            \caption{A choice of tile in $T$ and its extension to $\P$.}
            \label{fig:xAndxbar}
        \end{figure}
        We then see $\tau(x)$ is the tuple, $(1,-1,-1,1)$ and $\overline{\tau(x)}$ maps to the set of tuples, $\{(1,-1,-1,1), (-1,1,1,-1)\}$. In Figure~\ref{fig:tau(x)andtaubar(x)}, one can see a visual representation of both $\tau(x)$ and $\overline{\tau(x)}$.

        \begin{figure}[h]
            \begin{minipage}{.5\textwidth}
            \centering
               \includegraphics[width=.45\textwidth]{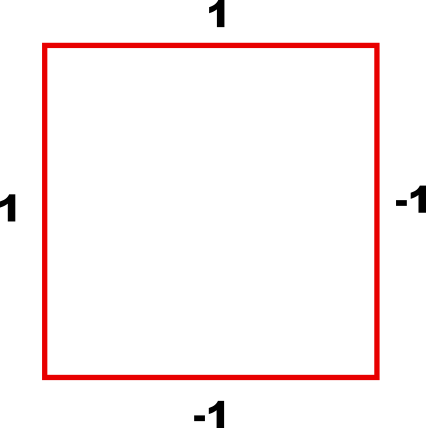} 
            \end{minipage}%
            \begin{minipage}{.5\textwidth}
                \centering
                \includegraphics[width=.8\textwidth]{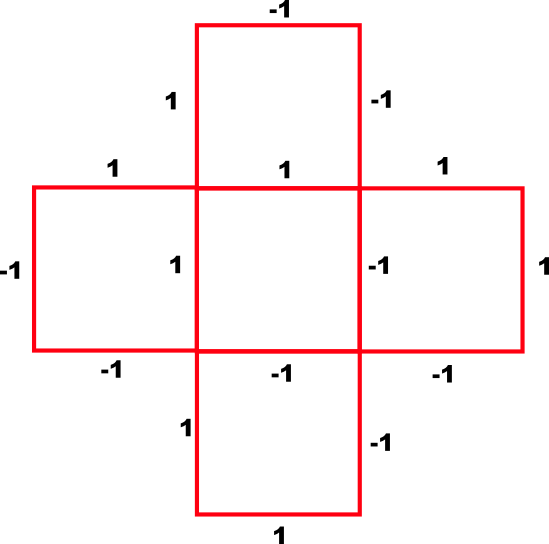}
            \end{minipage}%
            \caption{$\tau(x)$ (left) and $\overline{\tau(x)}$ (right) }
            \label{fig:tau(x)andtaubar(x)}
        \end{figure}

        Now let $\phi_x$ be the map such that such that $\phi_x\cdot \overline{\tau(x)} = \tau(\overline{x})$. We can recover this map based on the group structure of $\Lambda$, and it automatically satisfies \cref{reflectionscheme} as the entire dihedral group is reversal-closed for $m=4$ (see \cref{squaresubset}).

        Observe, by comparing $\tau(\overline{x})$ and $\overline{\tau(x)}$, we get \[\phi_x = \{(1,1,1,1), (1,-1,1,-1), (-1,1,-1,1), (1,-1,1,-1), (-1,1,-1,1)\},\] visually represented in Figure~\ref{fig:tau(xbar)vstau(x)bar}.

        \begin{figure}
        \centering
        \begin{minipage}{.5\textwidth}
          \centering
          \includegraphics[width=.8\linewidth]{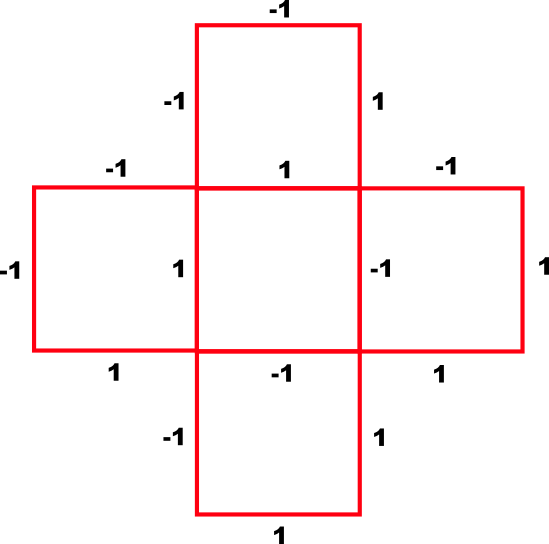}
        \end{minipage}%
        \begin{minipage}{.5\textwidth}
          \centering
          \includegraphics[width=.8\linewidth]{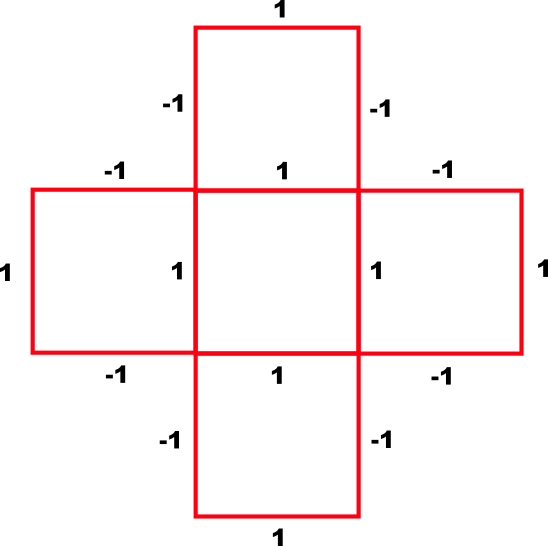}
        \end{minipage}%
        \caption{$\tau(\overline{x})$ (left) and a visualization of $\phi_x$, achieved from comparing $\tau(\overline{x})$ with $\overline{\tau(x)}$.}
        \label{fig:tau(xbar)vstau(x)bar}
        \end{figure}
        
        Because of the symmetries in the image of $\tau$, we see that this map $\phi_x$ is independent of the choice of $x$, thus there exists a unique map, $\phi$ satisfying the commutative diagram in Definition~\ref{def:phigenerated}. In this case, $\Lambda$ is is closed under dihedral actions, thus we can apply Theorem~\ref{newtiling}, and recover an alignment that is also $\phi$-generated.
    \end{ex}

    We now state the main result of this section, which guarantees the existence of $\phi$-generated edge-reversals.

    \begin{theorem}\label{thm:generatetiling}
        For $\phi$ a degree $n$ $\Gamma$-reflection scheme on $\Dm$, $T$ the reflective $\{m,n\}$ tiling on $\Dm$, $x_0 \in T_2$ a tile, and $\sigma \in \Gamma$, there exists a unique $\phi$-generated edge-reversal $\tau \colon T \to \Lambda_{\Dpm}$ with $\tau(x_0) = \deldm \cdot \sigma(\deldpm)$.
    \end{theorem}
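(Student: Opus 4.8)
The plan is to read off $\tau$ from the single value $\tau(x_0)$ by propagating along tracks, and then to check that the result is well defined, natural, $\phi$-generated, and lands in $\Lambda_{\Dpm}$. The essential structural input is the description of the reflective tiling $T$ implicit in the construction of \cref{reflectiveexists}: the free action of $W_{m,n}$ identifies $T_2$ with $W_{m,n}$ via $x_0 \leftrightarrow e$, and the reflective labelling is exactly the one for which the tile $x$ with associated element $w_x$ shares its edge labelled $d_i$ with the tile associated to $w_x s_i$, this common edge being labelled $d_i$ in both tiles. Hence a track from $x_0$ to a tile $x$ with route $j_1,\ldots,j_k$ visits the tiles associated to $e, s_{j_1}, s_{j_1}s_{j_2}, \ldots, s_{j_1}\cdots s_{j_k}$, so that $w_x = s_{j_1}\cdots s_{j_k}$ independently of the chosen track.

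I would first deduce uniqueness. If $\tau$ is $\phi$-generated, then for any track $x_0 = y_0, y_1, \ldots, y_k = x$ with route $j_1,\ldots,j_k$ we have $y_i = \overline{y_{i-1}}(s_{j_i})$, so evaluating the defining equation of \cref{def:phigenerated} at $s_{j_i}$ gives $\tau(y_i) = \phi(s_{j_i})\cdot\tau(y_{i-1})$; iterating, $\tau(x) = \phi(s_{j_k})\cdots\phi(s_{j_1})\cdot\tau(x_0)$. By \cref{coxeterhomomorphism} the scheme $\phi$ extends to a homomorphism $\Phi\colon W_{m,n}\to\Lambda_2$, and since $\Lambda_2$ is abelian this product equals $\Phi(w_x)\cdot\tau(x_0)$, which depends only on $x$; together with $\tau(d_i(x)) = \pi_i(\tau(x))$ this pins down $\tau$ on all of $T_1$ and $T_2$.

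For existence I would then \emph{define} $\tau$ on tiles by $\tau(x) \coloneq \Phi(w_x)\cdot\tau(x_0)$ with $\tau(x_0)\coloneq\deldm\cdot\sigma(\deldpm)$, and on edges by $\tau(d_i(x))\coloneq\pi_i(\tau(x))$. Well-definedness on edges uses that in the reflective tiling an edge $e$ is $d_i(x) = d_i(x')$ with the \emph{same} index $i$ in the two tiles containing it (nonsingularity making $i$ unique within each tile): since $w_{x'} = w_x s_i$ we get $\tau(x') = \phi(s_i)\cdot\tau(x)$, and the naturality of the presheaf map $\phi\colon\P\to\Lambda^\Gamma$ forces $\pi_i(\phi(s_i)) = \pi_i(\phi(s_0)) = 1$ — both equal the value of $\phi$ on the edge $d_i(s_0)=d_i(s_i)$ of $\P$ — so $\pi_i(\tau(x')) = \pi_i(\tau(x))$. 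This simultaneously shows $\tau$ is natural, and that it is $\phi$-generated, since $\tau\circ\bar x$ sends $s_0\mapsto\tau(x)$ and $s_i\mapsto\tau(x')=\phi(s_i)\cdot\tau(x)$, which is exactly $\phi\cdot\overline{\tau(x)}$. Finally, to see that $\tau$ factors through $\Lambda_{\Dpm}$, write $w_x = s_{j_1}\cdots s_{j_k}$ and reorder in the abelian group $\Lambda_2$ to get $\tau(x) = \deldm\cdot\big(\sigma(\deldpm)\cdot\phi(s_{j_1})\cdots\phi(s_{j_k})\big)$; each $\phi(s_{j_\ell})$ lies in $\Lambda^\Gamma_2 = \{\deldpm\cdot\rho(\deldpm) : \rho\in\Gamma\}$, so applying \cref{gammareuse} $k$ times (starting from $\sigma\in\Gamma$) rewrites the parenthesized term as $\rho(\deldpm)$ for some $\rho\in\Gamma$, whence $\tau(x)\in\deldm\cdot[\deldpm]_{\Dim} = (\Lambda_{\Dpm})_2$ and $\tau(d_i(x)) = \pi_i(\tau(x))\in\{1,-1\} = (\Lambda_{\Dpm})_1$.

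The step I expect to demand the most care is the first one: making precise that the route of \emph{any} track out of $x_0$ spells a word for $w_x$, equivalently that crossing the $d_i$-labelled edge of an arbitrary tile is always right multiplication by $s_i$ on the associated Coxeter element. This is where the specific reflective labelling convention from \cref{reflectiveexists} (and its uniqueness up to isomorphism) is genuinely used; once it is in hand, the remainder is bookkeeping in the abelian group $\Lambda_2 = (\Z/2)^m$ together with repeated application of \cref{gammareuse}.
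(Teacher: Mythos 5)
Your proposal is correct and follows essentially the same route as the paper: define $\tau(x) = \deldm\cdot\sigma(\deldpm)\cdot\phi(w_x)$ via the Coxeter homomorphism, verify consistency across shared edges using that the $i$-th component of $\phi(s_i)$ is $1$, and show the values stay in $\deldm\cdot[\deldpm]_{\Dim}$ by inductively applying reversal-closedness (your repeated use of \cref{gammareuse} is exactly the paper's induction step). Your explicit uniqueness argument via propagation along tracks is a welcome addition that the paper's proof leaves implicit, but it does not change the underlying strategy.
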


    \begin{proof}
        We can define $\tau$ on tiles by $\tau(x) = \deldm \cdot \sigma(\deldpm) \cdot \phi(w)$, where $w$ is the unique element of the Coxeter group $W_{m,n}$ which sends $x_0$ to $x$ and $\phi$ is regarded as a homomorphism $W_{m,n} \to \Lambda_2$. This assignment extends to a map of presheaves $T \to \Lambda$ as the Coxeter group elements corresponding to tiles adjacent along their $d_i$ edge differ by a generator $s_i$ of $W_{m,n}$ and $\phi(s_i)$ has a 1 as its $i$-th component, ensuring that the $i$-th components of $\tau$ applied to both tiles agree. 
        
        It remains then to check that each $\tau(x) \in \deldm \cdot \left[\deldpm\right]_{\Dim}$, and in fact we will show that $\tau(x) = \deldm \cdot \sigma'(\deldpm)$ for some $\sigma' \in \Gamma$. As a base case this is true for $x=x_0$. To use induction, assume that the claim is true when the Coxeter group element $w$ corresponding to a tile is the product of at most $k$ generators $s_i$. If a tile $x$ corresponds to the Coxeter group element $ws_i$ and $$\deldm \cdot \sigma(\deldpm) \cdot \phi(w) = \deldm \cdot \sigma'(\deldpm)$$ for $\sigma' \in \Gamma$ by the inductive hypothesis, then 
        $$\tau(x) = \deldm \cdot \sigma'(\deldpm) \cdot \phi(s_i).$$
        By definition of a $\Gamma$-reflection scheme $\phi(s_i) = \deldpm \cdot \sigma''(\deldpm)$ for some $\sigma'' \in \Gamma$, so 
        $$\tau(x) = \deldm \cdot \sigma'(\deldpm) \cdot \deldpm \cdot \sigma''(\deldpm) = \deldm \cdot \sigma'''(\deldpm)$$
        for some $\sigma''' \in \Gamma$ as $\Gamma$ is reversal-closed. Hence by induction, the proof is complete.
    \end{proof}

    \begin{ex}
    Consider the $\mathcal{C}_4$ category with source and target relations defined as follows:
    \[
        s \circ d^1 = t \circ d^2, t \circ d^1 = s \circ d^4,s \circ d^2 = s \circ d^3, t \circ d^3 = t \circ d^4
    \]
    Then let $\phi$ be defined by the following components:
    \begin{align*}
        \phi(s_0) &= (1,1,1,1)\\
        \phi(s_1) &= (1, 1, -1, -1)\\
        \phi(s_2) &= (1, 1, -1, -1)\\
        \phi(s_3) &= (-1, -1, 1, 1)\\
        \phi(s_4) &= (-1, -1, 1, 1)\\  
    \end{align*}
    
    Now we fix a tile $x_0$ and its image under $\tau$. Let $x_0$ be visualized in Figure~\ref{fig:refTilingGenEx} and $\tau(x_0) = (1,1,1,1)$.

    \begin{figure}[h]
        \begin{minipage}{.5\textwidth}
            \centering
            \includegraphics[width = .4\textwidth]{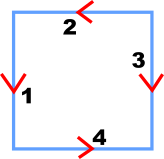}
        \end{minipage}%
        \begin{minipage}{.5\textwidth}
            \centering
            \includegraphics[width=.9\textwidth]{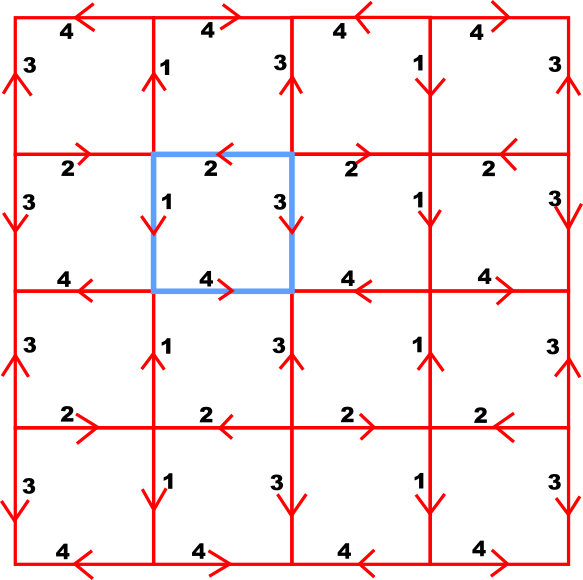}
        \end{minipage}%
        \caption{$T$ a reflective tiling (right) with $x_0$ a tile, highlighted in blue, given from representable (left).}
        \label{fig:refTilingGenEx}
    \end{figure}
    We then define $\tau$ on the adjacent tiles of $x_0$ such that $\tau(\overline{x_0}) = \phi\cdot \overline{\tau(x_0)}$. We then repeat this process to ultimately recover $\tau$. 

    One can see that for two adjacent tiles sharing an edge labeled $i$, $\phi(s_i)$ describes how to these two tiles differ. Figure~\ref{fig:GenPlusFromPhi} is an example of the first step in building the tiling out from the image of this fixed tile $x_0$ under $\tau$. For all $i$ we see that the tile reflected across the edge labeled $i$ has red arrows where the tile agrees with the fixed central tile reflected over the edge labeled $i$, and black arrows where it does not. These correspond to the 1's and -1's in $\phi(s_i)$'s.

    \begin{figure}[h]
        \includegraphics[scale=.4]{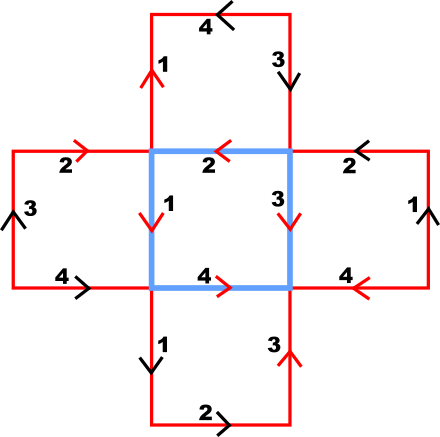}
        \caption{Visualization of adjacent tiles by applying $\phi$ to the image of $x_0$ under $\tau$.}
        \label{fig:GenPlusFromPhi}
    \end{figure}

    We can continue building outwards by attaching a new square at every edge $i$, and directing it according to $\phi(s_i)$, as seen in Figure~\ref{fig:FinalGenTiling}. From Theorem~\ref{thm:generatetiling} this is well-defined. Below is a continuation of this construction process with more tiles. The red arrows are where the directing agree with the reflective tiling given by $\mathcal{C}_4$, and the black arrows are where it does not. Notice the symmetry along diagonals.

    \begin{figure}[h]
        \includegraphics[scale=.4]{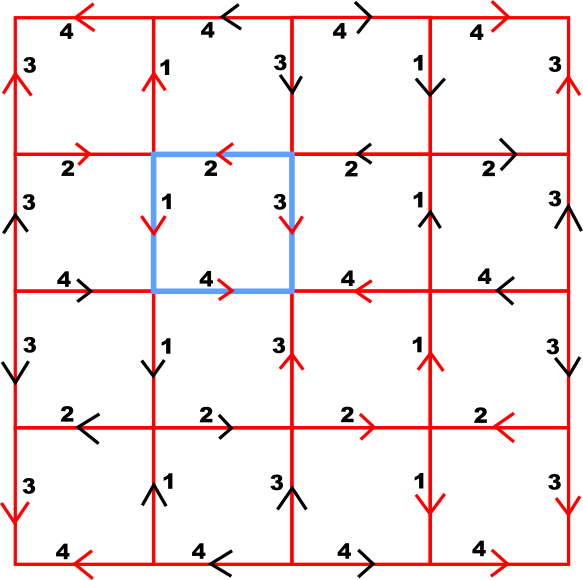}
        \caption{Visualization of edge-reversal map as generated by $\phi$.}
        \label{fig:FinalGenTiling}
    \end{figure}
    
    \end{ex}

    \begin{ex}\label{pentagonreflectiongenerated}
        In Figure~\ref{fig:[pentagonphiexample]} is an example of a reflection generated directed tiling coming from a $\mathcal{C}_5$ category. Here, $\phi_1=(1,1,-1,-1,-1)$, $\phi_2=(1,1,1,1,1)$, $\phi_3=\phi_4=\phi_5=(-1,-1,1,1,1)$. We can check that this choice of $\phi$ is reversal closed. 

        \begin{figure}
            \centering
            \includegraphics[width=0.5\linewidth]{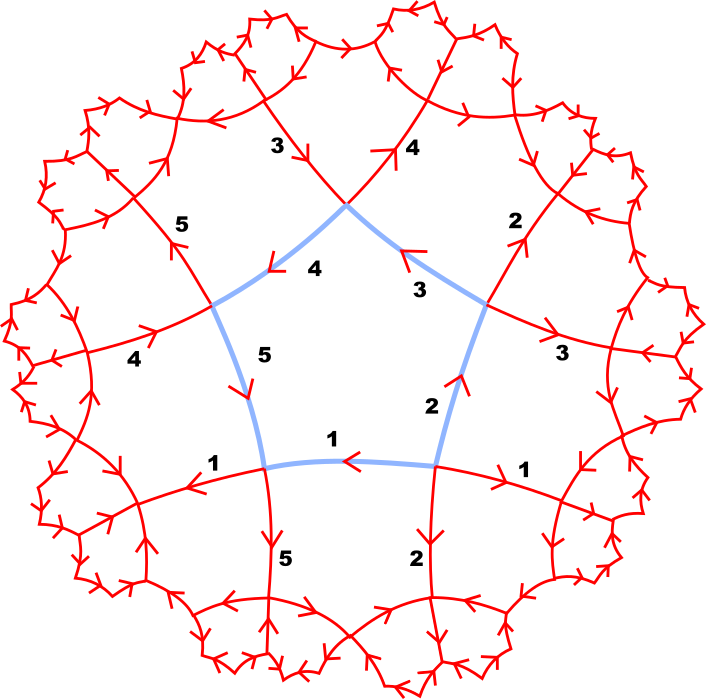}
            \caption{A directed $\{5,4\}$ tiling that is reflection generated.}
            \label{fig:[pentagonphiexample]}
        \end{figure}
    \end{ex}

\section{Global transformation properties of reflection-generated tilings}

    In a geometric $\{m,n\}$ tiling with $n$ even, sequences of opposite edges at each vertex together form geodesic lines through the (hyperbolic) plane, and reflection across such a geodesic preserves the tiling in the sense that vertices are sent to vertices, edges to edges, and tiles to tiles. As such, it is reasonable to expect that under certain conditions such a reflection will also provide an automorphism of a directed $\{m,n\}$ tiling, meaning it additionally preserves the alignment data. We can also show how for a reflection-generated tiling, the failure of such a reflection to preserve the alignment can be precisely measured based on its generating reflection scheme.



    In order to keep track of how the directionality of a tile relates to that of its image under reflection, we will use a track between the two which is carefully chosen to be preserved by the reflection.

    \begin{defi}
         A track is called reflective if its route is a palindrome (as in, invariant under reversal).
    \end{defi}
    
    \begin{lemma}\label{lem:gammaOnTLemma}
        Let $T$ be a reflective tiling. Then the reflection about any geodesic of the undirected graph of $T$ corresponds to an automorphism of $T$.
    \end{lemma}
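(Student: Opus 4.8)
The plan is to show that the bijections which the reflection $g$ induces on $T_0$, $T_1$, $T_2$ assemble into a natural automorphism of the presheaf $T$. Write $g$ also for the automorphism it induces on the underlying undirected tiling; being realized by a reflection of the plane fixing $\ell$ pointwise, $g$ automatically sends tiles to tiles, edges to edges and vertices to vertices preserving all incidences. So the only thing to verify is that $g$ respects the \emph{labels}, i.e. that $g(d^i(x)) = d^i(g(x))$ for every tile $x$ and every $i$ (indices mod $m$). Once this is known, compatibility with the remaining structure maps is formal: the vertex $v^i(x)$ is the unique corner of the $m$-gon $x$ shared by the edges $d^{i-1}(x)$ and $d^i(x)$, so $g(v^i(x))$ is the corner of $g(x)$ shared by $d^{i-1}(g(x))$ and $d^i(g(x))$, i.e. $v^i(g(x))$; and since in $\Dm$ we have $\{d^i\circ s,\, d^i\circ t\} = \{v^i,\, v^{i+1}\}$, the fact that $g$ commutes with every $d^i$ and every $v^i$ forces it to commute with $s$ and $t$ as well. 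Thus the whole statement reduces to the label claim.

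To prove the label claim I would induct on the combinatorial distance of a tile $x$ from the geodesic $\ell$, using the track from $x$ that runs straight to $\ell$, crosses it and retraces the mirror image back to $g(x)$ (this track, once the claim is established, has palindromic route, i.e. is a reflective track, and is preserved by $g$). Two ingredients feed the argument. (i) In any reflective directed tiling, two tiles $x,x'$ sharing an edge $e$ carry mutually mirror-image labelings across $e$: both are $\Dm$-tiles, so each has its edges $d^1,\dots,d^m$ arranged consecutively around its boundary, $e$ has the same label in both, and the two tiles lie on opposite sides of $e$, which forces the cyclic order of labels on $x'$ to be the reverse of that on $x$; equivalently, for the plane reflection $\mu_e$ across the line through $e$ (which carries $x$ to $x'$) one has $\mu_e(d^j(x)) = d^j(\mu_e(x))$ for all $j$. (ii) If a tile $x$ has an edge $e^*$ on $\ell$, then near $x$ the geodesic $\ell$ is the straight line extending $e^*$, so $g$ agrees near $x$ with $\mu_{e^*}$ and $g(x)$ is the tile across $e^*$; hence the base case (tiles adjacent to $\ell$) is exactly (i) applied to the pair $x,g(x)$. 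For the inductive step, pick an edge $e = d^j(x) = d^j(x')$ of $x$ shared with a tile $x'$ closer to $\ell$; the inductive hypothesis gives $g(e) = d^j(g(x'))$, and since $g(x),g(x')$ are adjacent along $g(e)$, reflectivity yields $g(e) = d^j(g(x))$ too; for the remaining labels one chains (i) for the pair $(x',x)$, the conjugation identity $g\mu_e = \mu_{g(e)}g$, the inductive hypothesis on $x'$, and (i) for the pair $(g(x'),g(x))$ to conclude $g(d^i(x)) = d^i(g(x))$ for all $i$. As the tile-adjacency graph of the tiling is connected, every tile is finitely far from $\ell$, so the induction covers all tiles.

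The main obstacle I anticipate is the bookkeeping in the inductive step: carefully tracking how the label set $\{1,\dots,m\}$ is permuted by the composite of the two edge-reflections $\mu_e,\mu_{g(e)}$ and the global reflection $g$, and cleanly justifying the two geometric inputs (a geodesic of the tiling is locally a straight line, and the tiling is symmetric enough that ``reflection about $\ell$'' is genuinely an automorphism). A shorter alternative sidesteps the induction over tracks by invoking the Coxeter construction of the reflective tiling from \cref{reflectiveexists}: one shows directly that its labeling is $W_{m,n}$-equivariant (the edge shared by the tiles $w(x_0)$ and $w s_i(x_0)$ carries label $i$, by induction on the word length of $w$), and since the reflection about any geodesic of $T$ is conjugate to a Coxeter generator and hence lies in $W_{m,n}$, it preserves labels and therefore the full presheaf structure. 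This trades the geometric bookkeeping for reliance on the explicit construction rather than the abstract reflectivity hypothesis.
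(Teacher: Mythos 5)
Your proposal is correct and follows essentially the same route as the paper's proof: reduce everything to showing $\gamma$ commutes with the face maps $d^i$, establish the base case for tiles adjacent to the geodesic via the mirror-image labeling of adjacent tiles in a reflective tiling, and induct outward along reflective (palindromic) tracks, with your induction on distance to the geodesic matching the paper's induction on the length $2k+1$ of the reflective track from $x$ to $\gamma(x)$. The Coxeter-equivariance alternative you sketch at the end is a plausible shortcut but is not the argument the paper gives.
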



    \begin{proof}
        Let $T$ be a reflective tiling and $\gamma$ a reflection automorphism of the undirected graph of $T$ across a geodesic. As $\gamma$ is an involution, the functions $\gamma_0,\gamma_1,\gamma_2$ on vertices, edges, and tiles are all bijections, but we need to show that $\gamma$ is a map of presheaves, making the diagram below commute for each structure map $s,t,d_1,...,d_m$.
        \[
        \begin{tikzcd}
            T_0 \arrow[dd, "\gamma_0"]  &
            T_1\arrow[dd, "\gamma_1"] \arrow[l, shift left = .75ex, "t"] \arrow[l, shift right = .75ex, "s"'] &
            T_2 \arrow[dd, "\gamma_2"]\arrow[l, draw=none, shift right = 2.5ex, "\vdots", "d_1"']  \arrow[l, shift right = 2.5ex] \arrow[l, shift left = 2.5ex, "d_m"]\\\\
            T_0'  &
            T_1' \arrow[l, shift left = .75ex, "t"] \arrow[l, shift right = .75ex, "s"'] &
            T_2' \arrow[l, draw=none, shift right = 2.5ex, "\vdots", "d_1"']  \arrow[l, shift right = 2.5ex] \arrow[l, shift left = 2.5ex, "d_m"]
        \end{tikzcd}
        \]

        We show that $\gamma_1(d_i(x)) = d_i(\gamma_2(x))$ by induction on the distance between $x$ and $\gamma_2(x)$. First suppose $x$ and $\gamma_2(x)$ are adjacent, which as $T$ is reflective means that $d_i(x) = d_i(\gamma_2(x))$ for exactly one $i$. This edge must then lie on the geodesic which is fixed by $\gamma_1$, so we have $\gamma_1(d_i(x)) = d_i(x) = d_i(\gamma_2(x))$, which by the construction of the reflective tiling in \cref{reflectiveexists} implies that $\gamma_1(d_j(x)) = d_j(\gamma_2(x))$ as the edge labels in adjacent tilings are reflections of one another across their shared edge. This also shows why moreover $s(\gamma_1(e))=\gamma_0(s(e))$ and $t(\gamma_1(e))=\gamma_0(t(e))$ for all of the edges $e$ of the tile $x$: the direction of $d_i(x)$ is fixed as its vertices are, and the remaining edges' directions are determined by this one.

        Note that in this case there is a length 1 reflective track from $x$ to $\gamma_2(x)$. We will argue for the remaining tiles by induction on the length of a track from $x$ to $\gamma_2(x)$. It can be assumed that such a track is reflective: any given track must pass through the geodesic axis of reflection, and as soon as a track does so its remaining tiles can be replaced with reflections of those from $x$ to the geodesic.

        Assume that for tiles $x$ such that $x$ and $\gamma_2(x)$ have a track between them of length at most $2k-1$, and all of their edges, $\gamma$ preserves the edge and endpoint labels. Suppose $x$ and $\gamma_2(x)$ admit a reflective track of length $2k+1$ which we denote by $x=a_0,a_1,\ldots,a_k, b_k, b_{k-1}, \ldots b_0=\gamma_2(x)$, where $\gamma_2(a_i) = b_i$. By assumption, we have $\gamma_1(d_i(a_1)) = d_i(\gamma_2(a_1)) = d_i(b_1)$ for all $i$. We can then deduce that for $j$ the label of the edge between $a_0$ and $a_1$,
        $$\gamma_1(d_j(a_0)) = \gamma_1(d_j(a_1)) = d_j(b_1) = d_j(b_0)$$
        as desired and $\gamma_1$ also preserves the endpoints of the edges $d_j(a_0)$ and $d_j(b_0)$. This determines that the edge labels and directions of the remaining edges of $x=a_0$ will also be preserved by $\gamma$, completing the proof.
    \end{proof}

    While this shows that in a reflective tiling all of the geometric reflections act as automorphisms, in more general tilings this will not typically be the case.    

    \begin{ex}
        First consider the square category $\mathcal{C}_4$ modeling the directed square tile in Figure~\ref{fig:C4foruniqueglobalreflection},
        \begin{figure}[h]
            \centering
            \includegraphics[width=.1\linewidth]{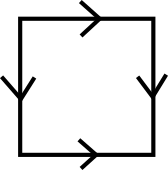}
            \caption{Visualization of relations in $\mathcal{C}_4$ category.}
            \label{fig:C4foruniqueglobalreflection}
        \end{figure}
and the directed tiling on this category in Figure~\ref{fig:uniqueglobalreflection}.
        \begin{figure}[h]
            \centering
            \includegraphics[width=0.5\linewidth]{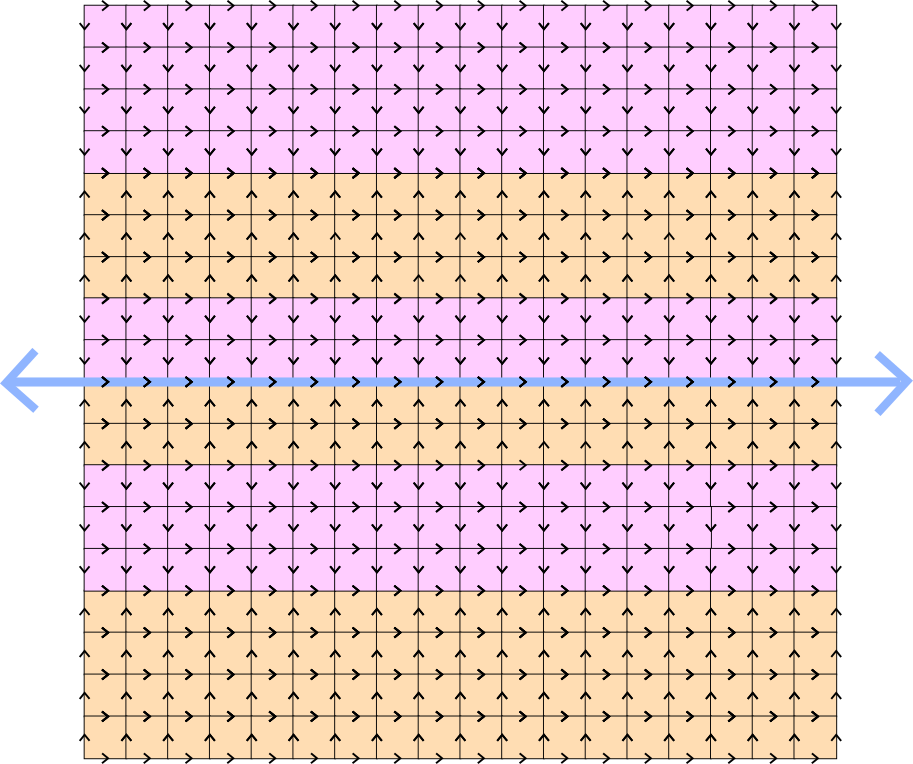}
            \caption{Directed tiling with unique horizontal reflection axis highlighted in blue.}
            \label{fig:uniqueglobalreflection}
        \end{figure}

        Observe that for two adjacent tiles of different colors that share an edge $e$, the edge directions of those two tiles are preserved by reflection across $e$. Similarly, two tiles of the same color sharing an edge indicates that translation across that edge preserves the tile alignments. The tiling is constructed such that the number of adjacent rows with squares of the same color increases by 1 with each alternating batch. This ensures that there is only 1 horizontal line of edges, highlighted in blue, across which reflection preserves the alignment. Reflection across any other horizontal axis would not be an automorphism of the directed tiling. 
    \end{ex}

    We might however expect more convenient properties to hold for reflection-generated tilings, given their close relationship to the reflective tiling.

    \begin{ex}
        In Figure~\ref{fig:globalreflectionex} is the reflection generated tiling $T$ from \cref{pentagonreflectiongenerated}, where $\phi_1=(1,1,-1,-1,-1)$, $\phi_2=(1,1,1,1,1)$, and $\phi_3=\phi_4=\phi_5=(-1,-1,1,1,1)$. The two pink tiles are reflections of one another over the geodesic labelled $g_1$ whose edges are labeled $d_1$ in the reflective base tiling used to generate $T$. Observe that the edge reversal code between the two tiles after reflection is precisely $\phi_1$.
        

        \begin{figure}[h]
        \includegraphics[scale=.3]{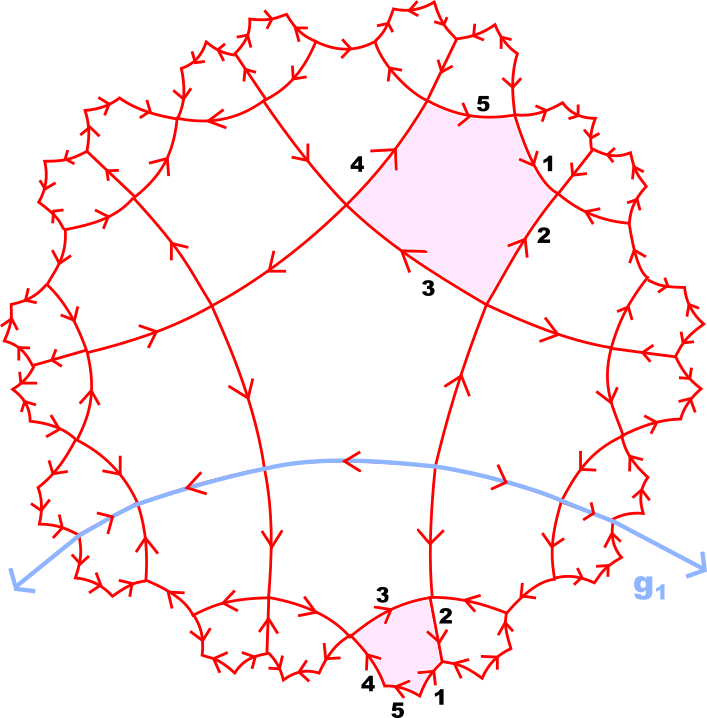}
        \caption{Two tiles related by reflection across the geodesic $g_1$ in a reflection generated $\{5,4\}$ tiling.}
        \label{fig:globalreflectionex}
    \end{figure}
    \end{ex}
    
    The reflection schema describe the local properties of a tiling in terms of how the directions of one tile relate to those of its adjacent tiles. But as in the previous example, these local relationships extend to describe how these tilings interact with the global action of reflecting along a geodesic. In fact, we can describe more generally how any edge-reversal $\tau$ interacts with such a reflection automorphism on the underlying reflective tiling.
    
    \begin{defi}
        Let $T$ be a directed tiling on $\Dm$, $\tau: T \to \Lambda_{\Dpm}$, $\gamma$ a reflection automorphism of $T$, and $\psi_\gamma \in (\Lambda_{\Dm})_2$. The edge-reversal $\tau$ is $\psi_\gamma$-reflective with respect to $\gamma$ if the diagram below commutes,
    
    \[
    \begin{tikzcd}
    T_2 \arrow[d, "\tau"] \arrow[rr, "\gamma"]& & T_2\arrow[d,"\tau"]\\
    (\Lambda_{\Dpm})_2 \arrow[rr, "\cdot\psi_\gamma"] & & (\Lambda_{\Dpm})_2
    \end{tikzcd}
    \]
    where the arrow $\cdot\psi_\gamma$ denotes multiplication by $\psi_\gamma$.
    
    This ensures that for any tile $x \in T_2$, $\tau(\gamma(x)) = \psi(\tau(x))$, so that $\tau$ is invariant under the reflection $\gamma$ up to the fixed action of $\psi_\gamma$.
    \end{defi}

    \begin{theorem}\label{globalreflections}
         An edge-reversal $\tau \colon T \to \Lambda_{\Dpm}$ from a reflective tiling is reflection-generated (for $n = 0$ (mod 4)) or reflective (for $n = 2$ (mod 4) if and only if each reflection $\gamma$ across a geodesic in $T$ has an associated $\psi_\gamma$ such that $\tau$ is $\psi_\gamma$-reflective with respect to $\gamma$. In particular, if $\gamma$ reflects across a geodesic made up of edges labeled $d_i$ and $\tau$ is generated by the reflection scheme $\phi$, then $\psi_\gamma = \phi_i$.
    \end{theorem}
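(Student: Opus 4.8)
The plan is to transport everything to the Coxeter group $W_{m,n}$ acting on the reflective tiling $T$. By \cref{lem:gammaOnTLemma} every geodesic reflection is an automorphism of $T$, so the group $W_{m,n}$ generated by these reflections acts on $T$ by automorphisms, and by the construction in \cref{reflectiveexists} this action is simply transitive on tiles; fixing a base tile $x_0$ thus identifies $T_2$ with $W_{m,n}$ via $w \mapsto w(x_0)$. I would first record the only geometric input needed: in a reflective $\{m,n\}$ tiling with $n \equiv 0 \pmod 4$, every geodesic consists of edges all carrying a single label $d_i$. This is because, as in the proof that a reflective tiling has $n$ even, the labels around each vertex alternate between two consecutive values, so the edge opposite a given one -- reached after the \emph{even} number $n/2$ of steps -- has the same label. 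Hence the reflection $\gamma$ across a geodesic of label $d_i$ is, inside $W_{m,n}$, a conjugate $w s_i w^{-1}$ of the generator $s_i$: the geodesic through $d_i(x_0)$ is by definition the wall of $s_i$, and any geodesic of label $d_i$ is the $w$-image of it for the $w$ carrying $x_0$ onto a tile the geodesic meets along its $d_i$-edge. (For $n \equiv 2 \pmod 4$ the labels along a geodesic instead alternate between two values; that case is handled separately at the end.)

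\textbf{Forward direction.} Suppose $\tau$ is $\phi$-generated for a degree-$n$ $\Gamma$-reflection scheme $\phi$, viewed via \cref{coxeterhomomorphism} as a homomorphism $\phi \colon W_{m,n} \to \Lambda_2$, and write $\phi_i := \phi(s_i)$. Evaluating the defining identity $\tau \circ \bar x = \phi \cdot \overline{\tau(x)}$ of \cref{def:phigenerated} at $s_i \in \P_2$: since $\bar x(s_i)$ is the tile adjacent to $x = w(x_0)$ across $d_i(x)$, namely $(ws_i)(x_0)$, and $\overline{\tau(x)}(s_i) = \tau(x)$, this reads $\tau((ws_i)(x_0)) = \phi_i \cdot \tau(x)$. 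Induction then gives $\tau(w(x_0)) = \phi(w) \cdot \tau(x_0)$ for all $w$. If $\gamma = w s_i w^{-1}$ is a geodesic reflection of label $d_i$ and $y = v(x_0)$ is any tile, then $\gamma(y) = (w s_i w^{-1} v)(x_0)$, so using that $\Lambda_2$ is abelian,
\[ \tau(\gamma(y)) = \phi(w s_i w^{-1} v)\cdot\tau(x_0) = \phi(s_i)\cdot\phi(v)\cdot\tau(x_0) = \phi_i \cdot \tau(y). \]
Thus $\tau$ is $\psi_\gamma$-reflective with $\psi_\gamma = \phi_i$, which also proves the final ``in particular'' sentence.

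\textbf{Reverse direction, $n \equiv 0 \pmod 4$.} Assume each geodesic reflection $\gamma$ has an associated $\psi_\gamma$ with $\tau(\gamma(y)) = \psi_\gamma \cdot \tau(y)$ for all $y$. Writing $w \in W_{m,n}$ as a product $\gamma_1 \cdots \gamma_k$ of geodesic reflections (the $s_i$ are such) and iterating, $\tau(w(x_0)) = \psi_{\gamma_1} \cdots \psi_{\gamma_k} \cdot \tau(x_0)$; since the left side depends only on $w$, this product is independent of the factorization and $\Psi(w) := \tau(w(x_0)) \cdot \tau(x_0)$ defines a homomorphism $\Psi \colon W_{m,n} \to \Lambda_2$ with $\tau(w(x_0)) = \Psi(w)\tau(x_0)$. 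Before extracting a reflection scheme I would normalize: replacing $\Dpm$ by the $m$-gon category with edge reversal code $\deldm \cdot \tau(x_0)$ leaves $\Lambda_{\Dpm}$ unchanged (it depends only on the dihedral orbit of $\deldpm$) and arranges $\tau(x_0) = \deldm \cdot \deldpm$; since $\tau$ lands in $\Lambda_{\Dpm}$ this forces $\Psi(W_{m,n}) \subseteq \{\deldpm \cdot \rho(\deldpm) \mid \rho \in \Dim\}$. Put $\Gamma := \{\rho \in \Dim \mid \deldpm \cdot \rho(\deldpm) \in \Psi(W_{m,n})\}$; a short computation -- cancelling leading copies of $\deldpm$ and using that $\Psi(W_{m,n})$ is a subgroup each of whose elements has the form $\deldpm\cdot\rho(\deldpm)$ -- shows $\Gamma$ is reversal-closed for $\deldpm$ and $\Psi(W_{m,n}) = \Lambda^\Gamma_2$. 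Now $\phi(s_0) := (1,\dots,1)$, $\phi(s_i) := \Psi(s_i)$ defines a map $\P \to \Lambda$, because $\Psi(s_i)$ has $i$-th component $1$ (the edge shared by $x_0$ and $s_i(x_0)$ is $d_i$ of both, so $\tau$ gives it the same value from either side of $\tau(s_i(x_0)) = \Psi(s_i)\tau(x_0)$), and it lands in $\Lambda^\Gamma$ by construction of $\Gamma$, so it is a degree-$n$ $\Gamma$-reflection scheme. Finally, for each tile $x = w(x_0)$, evaluating at every $s_j$ and using $\tau(w(x_0)) = \Psi(w)\tau(x_0)$ with commutativity shows $\tau \circ \bar x$ and $\phi \cdot \overline{\tau(x)}$ agree on all tiles of $\P$, hence (both being maps into $\Lambda$, which are determined by their values on tiles) as maps $\P \to \Lambda$; so $\tau$ is $\phi$-generated.

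\textbf{The case $n \equiv 2 \pmod 4$ and the main obstacle.} For $n \equiv 2 \pmod 4$ the same construction yields a homomorphism $\Psi \colon W_{m,n} \to \Lambda_2$, which by the lemma that every such homomorphism is trivial when $n \equiv 2 \pmod 4$ must be constant at the identity; hence $\tau(w(x_0)) = \tau(x_0)$ for all $w$, i.e. $\tau$ is constant, which exhibits the tiling $T'$ produced by \cref{newtiling} as reflective (take the permutation $\sigma_x$ constant). Conversely a constant $\tau$ trivially satisfies the $\psi_\gamma$-reflective condition with every $\psi_\gamma = (1,\dots,1)$. I expect the main obstacle to be the reverse direction for $n \equiv 0 \pmod 4$: producing the reversal-closed subset $\Gamma$ and checking that $\phi$ genuinely lands in $\Lambda^\Gamma$, which is precisely where normalizing $\Dpm$ does essential work, since without it $\Psi(s_i)$ need not have the form $\deldpm \cdot \rho(\deldpm)$ required to lie in any $\Lambda^\Gamma_2$. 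The geometric input -- that geodesics carry a single label and reflect as conjugates of Coxeter generators -- is the other point needing care, but it follows closely from the discussion of reflective tilings already in the paper.
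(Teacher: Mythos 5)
Your proof is correct, and it takes a genuinely more algebraic route than the paper's, which is worth comparing. For the forward direction the paper argues by induction on reflective (palindromic) tracks: it first verifies $\tau(\gamma(x))=\psi_\gamma\cdot\tau(x)$ for tiles adjacent to the geodesic and then propagates the identity outward one pair of tiles at a time, using commutativity of $\Lambda_2$ at each step. You instead establish the closed formula $\tau(w(x_0))=\phi(w)\cdot\tau(x_0)$ once and for all, identify the geodesic reflection as a conjugate $ws_iw^{-1}$ in $W_{m,n}$, and read off $\psi_\gamma=\phi_i$ from commutativity in a single line; the track induction is absorbed into \cref{coxeterhomomorphism} and the simply transitive action. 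Both arguments need the same geometric input (geodesics carry a single label when $4\mid n$), which you justify correctly via the parity of $n/2$. In the reverse direction the difference is more substantial: the paper's proof for $n\equiv 0\pmod 4$ only establishes that $\psi_{\gamma_1}=\psi_{\gamma_2}$ for two geodesics with the same label (via the relation $\gamma_2\eta\gamma_1=\eta$) and leaves the actual reconstruction of a reflection scheme implicit, whereas you explicitly build the homomorphism $\Psi(w)=\tau(w(x_0))\cdot\tau(x_0)$, normalize $\Dpm$ within its dihedral orbit so that $\tau(x_0)=\deldm\cdot\deldpm$, extract the reversal-closed subset $\Gamma$ from the subgroup $\Psi(W_{m,n})$, and verify the commuting square of \cref{def:phigenerated}. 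That normalization step and the verification that $\Gamma$ is reversal-closed (using that $\Psi(W_{m,n})$ is a subgroup in which the leading $\deldpm$'s cancel) are exactly the pieces the paper's converse glosses over, so your version is the more complete one. The $n\equiv 2\pmod 4$ case is handled at essentially the same level of rigor in both: you route it through the triviality of homomorphisms $W_{m,n}\to\Lambda_2$, the paper through the vertex relation $\gamma_i\gamma_{i+1}\gamma_{i+2}=\gamma_{i+1}$; both reduce to $\tau$ being constant, and both treat the ``reflective $\Rightarrow$ $\psi_\gamma$-reflective'' implication for this case only briefly.
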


    \begin{proof}
        First suppose $T$ is a reflective tiling and $\tau: T \to \Lambda_{\Dpm}$ an edge reversal map that is reflection generated by $\phi$. Now let $\gamma$ be a reflection across a fixed geodesic in $T$. Suppose, $x\in T_2$ such that $\gamma(x)$ is adjacent to $x$. Then for some $1\leq i \leq m$, $d_i(x) = d_i(\gamma(x))$. Define $\psi_\gamma = \phi(s_i)$. By definition of reflection generation, $\psi_\gamma \cdot \tau(x) = \tau(\gamma(x))$. 
        
        Now suppose $y$ is an arbitrary tile in $T_2$ and let $x$ be a tile adjacent to the geodesic. As $T$ is reflective, there is some track $y=a_0,\ldots, a_k = x$ from $y$ to $x$ and therefore a reflective track $$y=a_0,\ldots,a_k,\gamma(a_k),\ldots,\gamma(a_0)=\gamma(y)$$
        of length $2k+1$, as $\gamma(a_k)=\gamma(x)$ is adjacent to $x=a_k$ and $\gamma$ preserves routes. In particular, the route of this track is the route of the track $a_0,\ldots,a_k$ followed by $i$ followed by the reverse of the route of $a_0,\ldots,a_k$.
        
        We will proceed by induction on $k$, with the base case $k=0$ shown above. Suppose the diagram commutes for tracks of length less than or equal to $2k+1$ and consider the track, $$y=a_0,\ldots,a_{k+1},\gamma(a_{k+1}),\ldots,\gamma(a_0)=\gamma(y)$$ 
        of length $2k+3$. Observe, the subtrack $a_1,\ldots,\gamma(a_1)$ is a track of length $2k+1$, so $\tau(\gamma(a_1)) = \psi_\gamma(\tau(a_1))$. But observe, $y$ is adjacent to $a_1$ along the edge $d_j(y)$ and by \cref{lem:gammaOnTLemma} $\gamma(y)$ is adjacent to $\gamma(a_1)$ on their $j$-th edges, so  we have 
        $$\tau(\gamma(y)) = \phi_j \cdot \tau(\gamma(a_1)) = \phi_j \cdot \psi_\gamma \cdot \tau(a_1) = \psi_\gamma \cdot \phi_j \cdot \tau(a_1) = \psi_\gamma \cdot \tau(y).$$ 
        Therefore, as $\psi_\gamma \cdot \tau(y) = \tau(\gamma(y))$ for all $y \in T_2$, $\tau$ is $\psi_\gamma$-reflective.

        Now for the converse suppose $\tau$ is $\psi_\gamma$ reflective for all $\gamma$. First suppose $n$ is divisible by 4. Then each geodesic is made up of edges which are $d_i$ of their adjacent tiles for some fixed $i \in \{1,...,m\}$. 

        First observe if $\gamma_1$ and $\gamma_2$ are both labeled $i$, then $\psi_{\gamma_1} = \psi_{\gamma_2}$. Because these geodesics have the same label, then there must be some reflection $\eta$ such that $\gamma_2\eta\gamma_1 = \eta$. 
        Then from the definition of $\tau$ being reflective with respect to every geodesic, the following diagram commutes.
        \[
        \begin{tikzcd}
            T_2 \arrow[d, "\tau"] \arrow[rr, "\gamma_1"]& & T_2 \arrow[d, "\tau"] \arrow[rr, "\eta"]& & T_2 \arrow[d, "\tau"] \arrow[rr, "\gamma_2"]& & T_2\arrow[d,"\tau"]\\
            (\Lambda_{\Dpm})_2 \arrow[rr, "\cdot\psi_{\gamma_1}"] & &(\Lambda_{\Dpm})_2 \arrow[rr, "\cdot\psi_\eta"] & & (\Lambda_{\Dpm})_2 \arrow[rr, "\cdot\psi_{\gamma_2}"] & & (\Lambda_{\Dpm})_2
        \end{tikzcd}
        \]
        But as $\gamma_2 \eta \gamma_1 = \eta$, we get from this that $\psi_{\gamma_2}\psi_{\eta}\psi_{\gamma_1} = \psi_\eta$. As the group $\Lambda_2$ is abelian and every element has order 2, we get that $\psi_{\eta}\psi_{\gamma_1}=\psi_{\gamma_2}\psi_{\eta}$ and therefore $\psi_{\gamma_1} = \psi_{\gamma_2}$.

        Now suppose $n \equiv 2 (\textrm{mod }4)$. It suffices to show that for each reflection $\gamma$ of $T$, the corresponding element $\psi_\gamma$ is trivial. Let $x \in  T_2$ be a tile such that $\gamma(x)$ is adjacent to $x$. Then consider the sequence of geodesics, $\gamma = \gamma_1, \gamma_2,\ldots, \gamma_n$, that correspond to reflecting $x$ about all of the edges adjacent to one of its vertices, beginning with $\gamma$. But observe that $\gamma_{i}\gamma_{i+1}\gamma_{i+2} = \gamma_{i+1}$: as $\gamma_{i+1}$ sends the $i$-th edge to the $(i+1)$-th edge we have $\gamma_{i}\gamma_{i+1}=\gamma{i+2}\gamma{i+1}$. This shows that $\psi_{\gamma_{i}}\psi_{\gamma_{i+1}}\psi_{\gamma_{i+2}} = \psi_{\gamma_{i+1}}$, which as before implies that $\psi_{\gamma_{i}} = \psi_{\gamma_{i+2}}$ for all $i = 1,\ldots, n$. 
        
        Therefore, as $\psi_{\gamma_1}\cdots\psi_{\gamma_n} = (1,\ldots,1)$ and $\psi_{\gamma_1}\cdots\psi_{\gamma_n} = (\psi_{\gamma_1} = \psi_{\gamma_2})^{n/2}$ with $n/2$ odd, we have $\psi_{\gamma_1}\psi_{\gamma_2} = (1,\ldots,1)$. Hence $\psi_{\gamma_1} = \psi_{\gamma_2}$, and so $\psi_{\gamma}$ and $\psi_{\gamma'}$ agree for all reflections $\gamma$ that intersect at some vertex. The same can then be shown for any two geodesics that do not intersect by choosing a third geodesic intersecting both. 
        
        We can then repeat this process and see that for any two geodesics, $\gamma$ and $\eta$, $\psi_\gamma = \psi_\eta$. Now take $x\in T_2$ such that $\gamma(x)$ is adjacent to $x$, then $\psi_\gamma$ must fix at least one edge. In particular, $\psi_\gamma$ has a 1 in some component. But, we can find a geodesic that has to fix any edge, and as $\psi_\gamma$ is the same for every choice of $\gamma$, we see that $\psi_\gamma = \underbrace{(1,1,\ldots,1)}_{\text{m times}}$. Therefore, $T$ is reflective.
    \end{proof}

    The automorphisms of undirected tilings (in the geometric world, isometries preserving the vertices, edges, and tiles) are generated by reflections, and the Coxeter group forms a subgroup of those automorphisms generated by reflections only across geodesics in the underlying graph of the tiling.

    Recall from \cref{coxeterhomomorphism} that a reflection scheme $\phi$ can be regarded as a group homomorphism $W_{m,n} \to \Lambda_2$. This homomorphism tells us how to extend \cref{globalreflections} to arbitrary automorphisms of the $\{m,n\}$ tiling.

    \begin{defi}\label{generalsymmetry}
        Let $T$ be a directed tiling on $\Dm$, $\tau: T \to \Lambda_{\Dpm}$, $\gamma$ any automorphism of $T$, and $\psi_\gamma \in (\Lambda_{\Dm})_2$. The edge-reversal $\tau$ is $\psi_\gamma$-symmetric with respect to $\gamma$ if the diagram below commutes,
    \[
    \begin{tikzcd}
    T_2 \arrow[d, "\tau"] \arrow[rr, "\gamma"]& & T_2\arrow[d,"\tau"]\\
    (\Lambda_{\Dpm})_2 \arrow[rr, "\cdot\psi_\gamma"] & & (\Lambda_{\Dpm})_2
    \end{tikzcd}
    \]
    where the arrow $\cdot\psi_\gamma$ denotes multiplication by $\psi_\gamma$.
    
    This means that $\tau$ is invariant under the automorphism $\gamma$ up to the fixed action of $\psi_\gamma$.
    \end{defi}

    \begin{cor}\label{automorphismphi}
        An edge-reversal $\tau \colon T \to \Lambda_{\Dpm}$ from the reflective $\{m,n\}$ tiling generated by a reflection scheme $\phi$ is $\phi(\gamma)$-symmetric with respect to an automorphism $\gamma$ in the Coxeter group $W_{m,n}$.
    \end{cor}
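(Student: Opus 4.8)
The plan is to reduce the corollary to \cref{globalreflections} by an induction on word length in the Coxeter group. Regarding $\phi$ as a group homomorphism $W_{m,n} \to \Lambda_2$ as in \cref{coxeterhomomorphism}, the claim asks exactly that the square of \cref{generalsymmetry} commutes with $\psi_\gamma = \phi(\gamma)$ for every $\gamma \in W_{m,n}$, i.e.\ that $\tau(\gamma(x)) = \phi(\gamma)\cdot\tau(x)$ for all tiles $x \in T_2$. Since $\phi$ is multiplicative and the generators $s_1,\ldots,s_m$ of $W_{m,n}$ are involutions, it suffices to prove this for each generator and then propagate it along arbitrary words $\gamma = s_{i_1}\cdots s_{i_k}$.

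First I would handle the generators, which is the only step with real content. The generator $s_i$ is, by the construction in the proof of \cref{reflectiveexists}, the reflection of the undirected tiling across the geodesic through the edge $d_i(x_0)$ of the chosen base tile. Because $\phi$ is a reflection scheme its degree $n$ is divisible by $4$, so in the reflective tiling $T$ every geodesic consists of edges carrying a single label (this is the fact invoked in the proof of \cref{globalreflections} in the case $n \equiv 0 \pmod 4$); in particular the geodesic fixed by $s_i$ is made of edges labeled $d_i$. Since $\tau$ is reflection-generated by $\phi$, \cref{globalreflections} applies and gives that $\tau$ is $\psi_{s_i}$-reflective with respect to $s_i$ with $\psi_{s_i} = \phi_i = \phi(s_i)$. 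As the commuting square defining $\psi$-reflectivity with respect to a reflection is literally the one defining $\psi$-symmetry, this is precisely the desired identity $\tau(s_i(x)) = \phi(s_i)\cdot\tau(x)$; the case $\gamma = e$ is immediate since $\phi(e) = (1,\ldots,1)$.

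For the inductive step, write $\gamma = \gamma' s_{i_k}$ with $\gamma' = s_{i_1}\cdots s_{i_{k-1}}$. For any tile $x$,
\[
\tau(\gamma(x)) = \tau\bigl(\gamma'(s_{i_k}(x))\bigr) = \phi(\gamma')\cdot\tau(s_{i_k}(x)) = \phi(\gamma')\cdot\phi(s_{i_k})\cdot\tau(x) = \phi(\gamma)\cdot\tau(x),
\]
using the inductive hypothesis for $\gamma'$ at the tile $s_{i_k}(x)$, the base case for $s_{i_k}$ at $x$, and finally that $\phi$ is a homomorphism. Equivalently, one may observe that the set of automorphisms $\gamma$ for which $\tau$ is $\phi(\gamma)$-symmetric is closed under composition (and under inversion, since every element of $\Lambda_2$ has order $2$), hence a subgroup of $W_{m,n}$ containing all generators. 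The argument is genuinely short; essentially all of the substance is in the base case, and there the only point needing care is to match the element $\psi_{s_i}$ delivered by \cref{globalreflections} with the image $\phi(s_i)$ of the Coxeter generator under \cref{coxeterhomomorphism} --- which is exactly the final assertion of \cref{globalreflections}.
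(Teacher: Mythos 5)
Your proposal is correct and follows essentially the same route as the paper: establish $\phi(s_i)$-symmetry for each generating reflection via \cref{globalreflections}, then propagate along composites using that $\psi$-symmetry respects composition and that $\phi$ is a homomorphism (\cref{coxeterhomomorphism}). The paper states this more tersely, but your explicit induction on word length and your care in matching $\psi_{s_i}$ with $\phi(s_i)$ are just expansions of the same argument.
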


    \begin{proof}
        This follows directly from \cref{globalreflections}, as $\gamma$ is a composite of reflections and the definition of $\psi_\gamma$-symmetry respects composition: if $\gamma$ is $\psi_\gamma$-symmetric and $\gamma'$ is $\psi_{\gamma'}$-symmetric, then the composite $\gamma\gamma'$ is $\psi_\gamma\psi_{\gamma'}$-symmetric. As the generating reflection $s_i$ is $\phi(s_i)$-symmetric for all $i$, this shows that any $\gamma$ in the Coxeter group is $\phi(\gamma)$-symmetric.
    \end{proof}

    This corollary tells us how to compare the directions of tiles under all of the standard orientation preserving automorphisms of the (hyperbolic) plane: translations, rotations, and (in the hyperbolic case) parabolic transformations.

    \begin{ex}
       Figure \ref{fig:globaltranslationexample} shows a global translation. On the left, we have a reflection-generated $\{5,4\}$ tiling given by $\phi_1=(1,1,-1,-1,-1)$, $\phi_2=(1,1,1,1,1)$, $\phi_3=\phi_4=\phi_5=(-1,-1,1,1,1)$. Two non-intersecting geodesics are highlighted in blue, $g_5$ and $g_2$ being labeled 5 and 2 respectively. We translate the entire tiling by reflecting over $g_5$ and then $g_2$ to obtain the directed tiling on the right (shown by the location of the colored tiles). Notice that we can express how the arrows change during this translation by changing every edge labeled $1$ or $2$. This is because $g_2 \circ g_5$ corresponds to multiplying by $(1,1,1,1,1)(-1,-1,1,1,1)=(-1,-1,1,1,1)$. 

        \begin{figure}
        \centering
        \begin{minipage}{.5\textwidth}
          \centering
          \includegraphics[width=.8\linewidth]{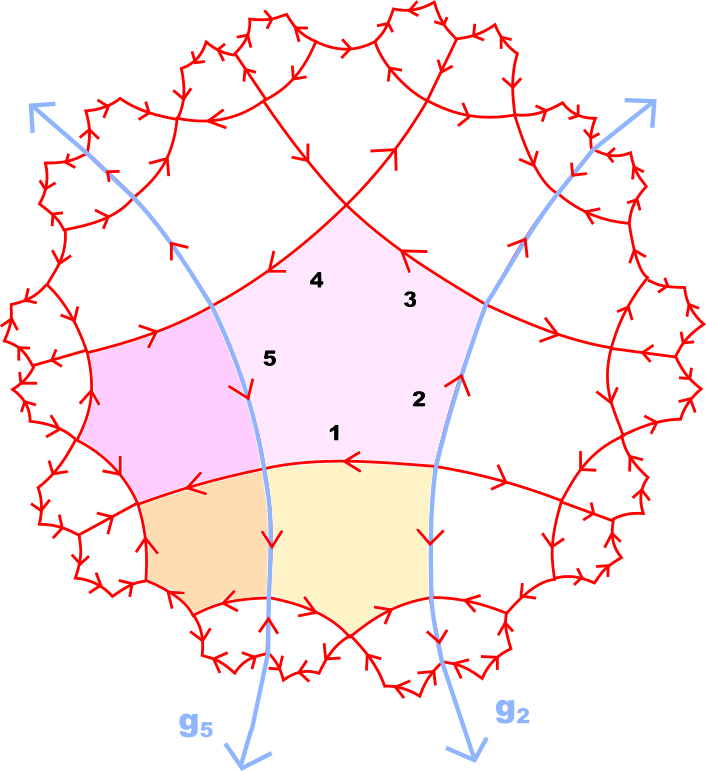}
        \end{minipage}%
        \begin{minipage}{.5\textwidth}
          \centering
          \includegraphics[width=.8\linewidth]{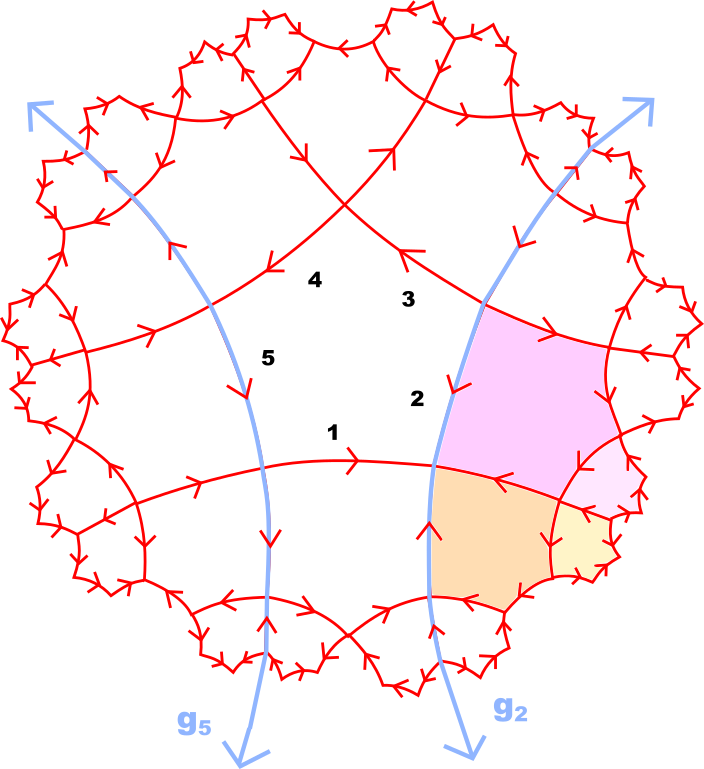}
        \end{minipage}%
        \caption{A global translation.}
        \label{fig:globaltranslationexample}
        \end{figure}
    \end{ex}

    \begin{ex}[Global Rotation]
        \begin{figure}
        \centering
        \begin{minipage}{.5\textwidth}
          \centering
          \includegraphics[width=.8\linewidth]{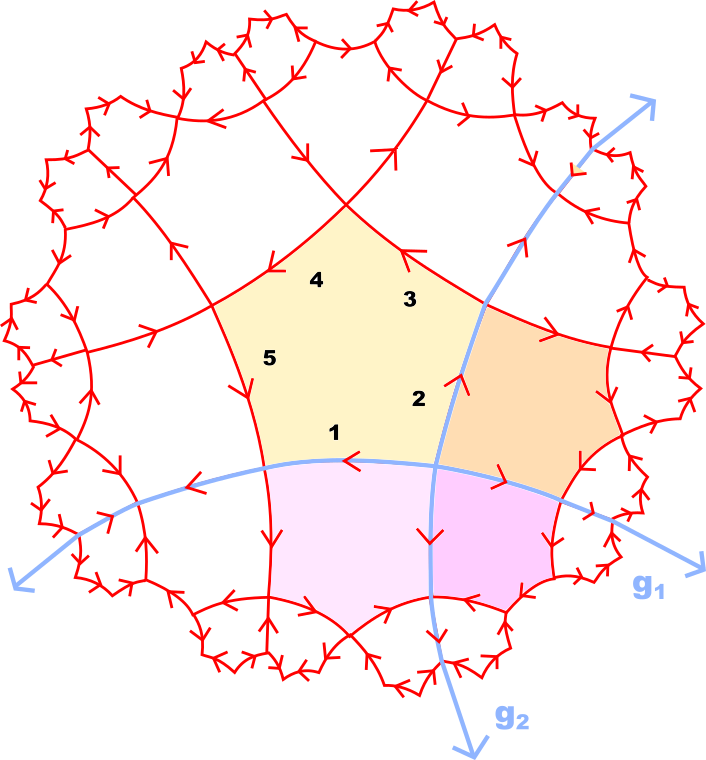}
        \end{minipage}%
        \begin{minipage}{.5\textwidth}
          \centering
          \includegraphics[width=.8\linewidth]{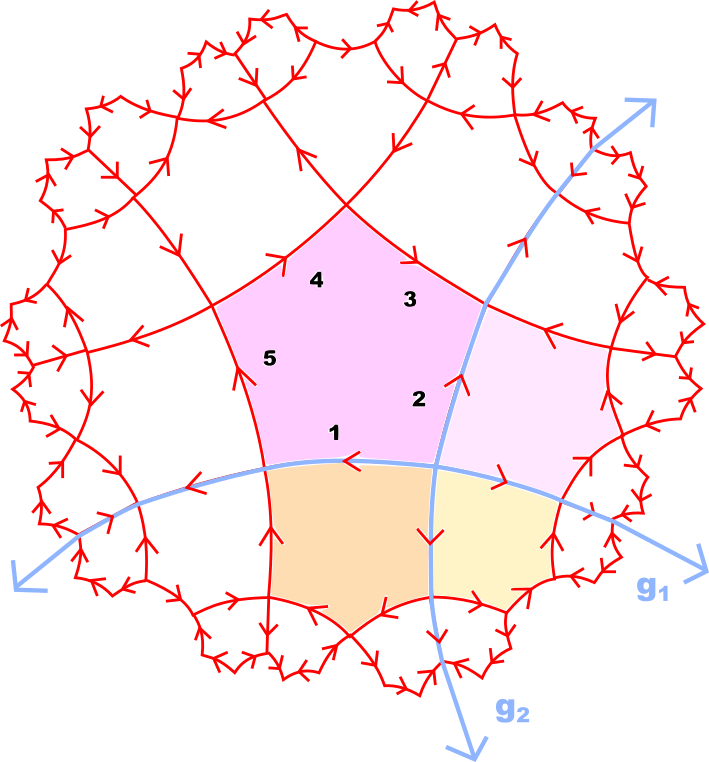}
        \end{minipage}%
        \caption{A global rotation.}
        \label{fig:globalrotationexample}
        \end{figure}

         Figure \ref{fig:globalrotationexample} shows a global rotation. On the left, we have the same $\{5,4\}$ tiling shown above. Two intersecting geodesics are highlighted in blue, $g_1$ and $g_2$. We rotate the entire tiling by reflecting over $g_1$ and then $g_2$ to obtain the directed tiling on the right. Similarly, we can express how the arrows change by during this translation by changing every edge labeled 3, 4 or 5, as $g_2 \circ g_1$ corresponds to multiplying by $(1,1,1,1,1)(1,1,-1,-1,-1)=(1,1,-1,-1,-1)$. Notice that with rotation, any rotation of the same "angle" about a vertex with the same label will have the same $\phi_\gamma$, since we can conjugate by translation, then cancel the conjugation as the group of tuples is abelian.
    \end{ex}

\bibliography{references}
\bibliographystyle{alpha}

\appendix

\section{Reversal-Closed Subsets}\label{appendixreversal}

We now turn to a cursory analysis of how to find reversal-closed subsets for a fixed $\delta \in \{1,-1\}^m$. The maximal reversal-closed subsets can be computed by enumerating the conflicts between elements of $\Dim$ ($\sigma_1,\sigma_2$ cannot coexist in a reversal-closed subset if $\delta \cdot \sigma_1(\delta) \cdot \sigma_2(\delta) \neq \sigma_3(\delta)$ for all $\sigma_3 \in \Dim$), enumerating the maximal conflict-free subsets of $\Dim$, and then filtering these subsets by whether they satisfy the reversal-closed condition (if $\sigma_1,\sigma_2 \in \Gamma$ for such a subset $\Gamma$, then at least one $\sigma_3 \in \Dim$ such that $\delta \cdot \sigma_1(\delta) \cdot \sigma_2(\delta) = \sigma_3(\delta)$ must be included in $\Gamma$ otherwise $\Gamma$ is discarded). 

While a more detailed algorithm is beyond the scope of this paper, we have computed these subsets for values of $m$ up to 10 and confirmed every tile in this range has such a subset containing at least 4 group elements, with each tile admitting multiple except in cases which we enumerate below where the entire set $\Dim$ is reversal-closed.

\begin{lemma}
    The sets $\{e\}$ and $\{e,\sigma\}$ are reversal closed for any $\sigma \in \Dim$ and any $\delta \in \{1,-1\}^m$, and the identity $e$ belongs to any maximal reversal-closed subset for $\delta$. 
\end{lemma}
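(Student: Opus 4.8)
The plan is to reduce the whole verification to the two elementary facts recorded right after the definition of a reversal-closed subset: multiplication in $\Lambda_2 = \{1,-1\}^m$ is commutative, and every element $\eta$ satisfies $\eta \cdot \eta = (1,\dots,1)$. In particular $\delta \cdot \delta = (1,\dots,1)$ and $e(\delta) = \delta$, since $e$ acts as the identity on $\Lambda_2$.

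First I would dispatch $\Gamma = \{e\}$: the only pair is $(\sigma_1,\sigma_2) = (e,e)$, and $\delta \cdot e(\delta) \cdot e(\delta) = \delta \cdot \delta \cdot \delta = \delta = e(\delta)$, so $\sigma_3 = e$ is a witness. Next, for $\Gamma = \{e,\sigma\}$ with $\sigma \in \Dim$ arbitrary, I would check the four pairs in $\Gamma^2$. The pair $(e,e)$ is as above; for $(e,\sigma)$ and $(\sigma,e)$, using $\delta \cdot \delta = (1,\dots,1)$ and commutativity gives $\delta \cdot e(\delta) \cdot \sigma(\delta) = \sigma(\delta) = \delta \cdot \sigma(\delta) \cdot e(\delta)$, so $\sigma_3 = \sigma \in \Gamma$ works in both cases; for $(\sigma,\sigma)$, since $\sigma(\delta) \cdot \sigma(\delta) = (1,\dots,1)$ we get $\delta \cdot \sigma(\delta) \cdot \sigma(\delta) = \delta = e(\delta)$, so $\sigma_3 = e \in \Gamma$ works. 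Hence $\{e,\sigma\}$ is reversal-closed.

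For the last assertion, let $\Gamma$ be a maximal reversal-closed subset for $\delta$; such subsets exist because $\Dim$ is finite and $\{e\}$ is reversal-closed, and a maximal one is necessarily nonempty since $\emptyset \subsetneq \{e\}$. I would show $\Gamma \cup \{e\}$ is again reversal-closed, which forces $\Gamma = \Gamma \cup \{e\}$, i.e. $e \in \Gamma$. Indeed, given $\sigma_1,\sigma_2 \in \Gamma \cup \{e\}$: if both lie in $\Gamma$ a witness exists by hypothesis; if exactly one equals $e$, say $\sigma_1 = e$ with $\sigma_2 \in \Gamma$, then $\delta \cdot e(\delta) \cdot \sigma_2(\delta) = \sigma_2(\delta)$, so $\sigma_3 = \sigma_2 \in \Gamma$ works (and symmetrically if $\sigma_2 = e$); if $\sigma_1 = \sigma_2 = e$ then $\sigma_3 = e$ works.

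There is essentially no obstacle here: every computation is a one-line consequence of $\Lambda_2$ being an elementary abelian $2$-group. The only points meriting a moment's care are recording that a maximal reversal-closed subset is nonempty (so the final claim is not vacuous) and that the reversal-closed condition requires only the \emph{existence} of a witness $\sigma_3$, which is all the argument uses.
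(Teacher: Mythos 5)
Your proof is correct and follows essentially the same route as the paper's: both arguments rest on the identities $\delta\cdot\delta=(1,\ldots,1)$ and $\sigma(\delta)\cdot\sigma(\delta)=(1,\ldots,1)$, with the final claim established by observing that adjoining $e$ to any reversal-closed subset preserves reversal-closure. Your version merely spells out the case analysis (and the nonemptiness of a maximal subset) more explicitly than the paper does.
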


\begin{proof}
    The first claim follows from the equations $\delta \cdot \delta \cdot \delta = \delta$ and $\delta \cdot \sigma(\delta) \cdot \sigma(\delta) = \delta$.

    For the second claim, there are no elements $\sigma \in \Dim$ which are prevented from being in the same reversal-closed subset as $e$, as it is always the case that 
    $$\delta \cdot \delta \cdot \sigma(\delta) = \sigma(\delta)$$
    as $\delta \cdot \delta = (1,...,1)$.
\end{proof}

\begin{lemma}
    The entire set $\Dim$ is reversal-closed for $(1,...,1) \in \{1,-1\}^m$.
\end{lemma}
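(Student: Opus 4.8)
The plan is to pin down what the dihedral action does to the specific tuple $\delta = (1,\ldots,1)$ and then verify the reversal-closed condition by direct substitution. First I would read off from the formulas in \eqref{action} that a rotation $r^k$ cyclically permutes the coordinates of a tuple and hence fixes $\delta$, while the reflection $f$ negates every coordinate, so $f(\delta) = (-1,\ldots,-1)$; since rotations also fix $(-1,\ldots,-1)$, it follows that $\sigma(\delta) = (1,\ldots,1)$ for every rotation $\sigma \in \Dim$ and $\sigma(\delta) = (-1,\ldots,-1)$ for every reflection $\sigma \in \Dim$. Equivalently, $\sigma \mapsto \sigma(\delta)$ is the sign homomorphism $\Dim \to \{1,-1\}$ followed by the diagonal inclusion $\{1,-1\} \hookrightarrow \{1,-1\}^m$; in particular it is a group homomorphism, which I will call $\epsilon$.

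Second, I would use that $\delta = (1,\ldots,1)$ is the identity element of the abelian group $\Lambda_2 \cong (\Z/2)^m$. Then for any $\sigma_1,\sigma_2 \in \Dim$,
\[
\delta \cdot \sigma_1(\delta) \cdot \sigma_2(\delta) = \epsilon(\sigma_1) \cdot \epsilon(\sigma_2) = \epsilon(\sigma_1\sigma_2) = (\sigma_1\sigma_2)(\delta),
\]
so the element $\sigma_3 = \sigma_1\sigma_2$ of $\Dim$ satisfies $\delta \cdot \sigma_1(\delta) \cdot \sigma_2(\delta) = \sigma_3(\delta)$. Since $\sigma_1,\sigma_2$ were arbitrary, every pair admits a witness $\sigma_3$ inside $\Dim$, which is precisely the statement that $\Dim$ is reversal-closed for $\delta$. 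One can also argue without $\epsilon$ by a short case split: the product $\delta \cdot \sigma_1(\delta) \cdot \sigma_2(\delta)$ equals $(1,\ldots,1) = e(\delta)$ when $\sigma_1,\sigma_2$ are both rotations or both reflections, and $(-1,\ldots,-1) = f(\delta)$ otherwise, and in every case this is $\sigma(\delta)$ for some $\sigma \in \{e,f\} \subseteq \Dim$.

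I do not expect any genuine obstacle: once the values $\sigma(\delta)$ are identified the verification collapses to a one-line computation, made trivial by $\delta$ being the group identity and by $\{(1,\ldots,1),(-1,\ldots,-1)\}$ being closed under coordinatewise multiplication. The only point worth flagging is that the dihedral action on $\Lambda_2$ does \emph{not} respect its group structure in general (reflections fail to), so the claim that $\sigma \mapsto \sigma(\delta)$ is a homomorphism really does depend on $\delta$ being this particular tuple; but that is exactly what the first step establishes directly from \eqref{action}.
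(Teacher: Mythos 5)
Your proof is correct and follows essentially the same route as the paper: both rest on the observation that $\sigma(1,\ldots,1) = \pm(1,\ldots,1)$ for every $\sigma \in \Dim$, so the triple product is again $\pm(1,\ldots,1)$, which is $e(\delta)$ or $f(\delta)$. Your homomorphism framing via $\epsilon$ (and the explicit witness $\sigma_3 = \sigma_1\sigma_2$) is a slightly tidier packaging of the same computation, but not a different argument.
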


\begin{proof}
    As $\sigma(1,...,1) = \pm (1,...,1)$ for all $\sigma \in \Dim$, we also have $(1,...,1) \cdot \sigma_1(1,...,1) \cdot \sigma_2(1,...,1) = \pm(1,...,1)$ for all $\sigma_1,\sigma_2 \in \Dim$. As $(1,...,1) = e(1,...,1)$ and $-(1,...,1) = f(1,...,1)$, this shows that $\Dim$ is reversal-closed.
\end{proof}

We now compare the reversal-closed subsets for $\delta$ to those for $\sigma(\delta)$.

\begin{lemma}\label{sigmamult}
    For any $\delta,\delta',\delta'' \in \{1,-1\}^m$ and $\sigma \in \Dim$, $$\sigma(\delta) \cdot \sigma(\delta') \cdot \sigma(\delta'') = \sigma(\delta \cdot \delta' \cdot \delta'').$$
\end{lemma}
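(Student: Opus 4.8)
The plan is to exploit the fact that each element of $\Dim$ acts on $\{1,-1\}^m$ as a permutation of the coordinates possibly composed with the global sign change $\delta \mapsto -\delta$. Such an action need not be a group homomorphism for the componentwise product (the sign change spoils this), but it \emph{does} commute with a product of an \emph{odd} number of elements, and three is odd.

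First I would observe that the set of $\sigma \in \Dim$ for which the claimed identity holds (for all $\delta,\delta',\delta'' \in \{1,-1\}^m$) is closed under composition. Indeed, if $\sigma_1$ and $\sigma_2$ both satisfy it, then applying $\sigma_1$ to the equation $\sigma_2(\delta)\cdot\sigma_2(\delta')\cdot\sigma_2(\delta'') = \sigma_2(\delta\cdot\delta'\cdot\delta'')$ and invoking the identity for $\sigma_1$ yields the identity for $\sigma_1\sigma_2$. Since $\Dim$ is generated by the rotation $r$ and the reflection $f$, it therefore suffices to verify the identity for these two generators.

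For $r$, the action is simply a cyclic permutation of the coordinates of $\delta$, hence a homomorphism of the abelian group $(\{1,-1\}^m,\cdot)$; thus $r(\delta)\cdot r(\delta')\cdot r(\delta'') = r(\delta\cdot\delta'\cdot\delta'')$ (in fact the two-factor version already holds). For $f$, I would argue componentwise: the $i$-th component of $f(\delta)\cdot f(\delta')\cdot f(\delta'')$ equals $(-\delta_{m+1-i})(-\delta'_{m+1-i})(-\delta''_{m+1-i}) = -\,\delta_{m+1-i}\delta'_{m+1-i}\delta''_{m+1-i}$, since the three factors of $-1$ multiply to $-1$, and this is exactly the $i$-th component of $f(\delta\cdot\delta'\cdot\delta'')$. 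That completes the verification and hence the proof.

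I do not expect a genuine obstacle here; the only point requiring care is the parity observation. The analogous statement for a product of \emph{two} elements fails for $f$ — the two sign changes cancel, so $f(\delta)\cdot f(\delta')$ equals the coordinate-reversal of $\delta\cdot\delta'$ \emph{without} the sign, not $f(\delta\cdot\delta')$ — and this is precisely why the reversal-closed condition in the main text is stated with three terms. It is worth flagging this explicitly so the reader sees why ``three'' is essential to the statement.
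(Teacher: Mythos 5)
Your proof is correct, and the parity observation you flag is exactly the heart of the matter; but your route is organized differently from the paper's. You reduce to the generators $r$ and $f$ by first noting that the set of $\sigma$ satisfying the ternary identity is closed under composition, and then verify $r$ (a coordinate permutation, hence a homomorphism) and $f$ (componentwise, where the three global signs multiply to one) directly. The paper instead proves a binary rule valid for \emph{every} $\sigma$ at once: $\sigma(\delta \cdot \delta') = \pm\,\sigma(\delta)\cdot\sigma(\delta')$ with the sign given by the homomorphism $\Dim \to \{1,-1\}$ sending $r \mapsto 1$ and $f \mapsto -1$, and then applies this twice to the triple product so that the two identical signs cancel. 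The two arguments encode the same insight — the defect from being a homomorphism is a uniform sign attached to $\sigma$, and an odd number of factors neutralizes it — but the paper's bookkeeping via the sign homomorphism is reused in the surrounding discussion and extends verbatim to any odd number of factors, whereas your generator reduction is more self-contained and avoids having to formulate the binary rule for a general $\sigma$. Your closing remark about why the two-factor version fails for $f$ matches the paper's own remark preceding the proof, so that point is well taken.
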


Note that this is not the case for binary products in $\{1,-1\}^m$ which are not generally respected by the action of $\Dim$. While the generator $r$ does respect the identity and multiplication, it is straightforward to check that $f(1,...,1) = -(1,...,1)$ and $f(\delta \cdot \delta') = -f(\delta) \cdot f(\delta')$. Thus we can conclude that for any $\sigma \in \Dim$, $\sigma(1,...,1) = \pm (1,...,1)$ and $\sigma(\delta \cdot \delta') = \pm \sigma(\delta) \cdot \sigma(\delta')$ with the sign determined by the sign of $\sigma$ under the homomorphism $\Dim \to \{1,-1\}$ sending $r$ to 1 and $f$ to -1.

\begin{proof}
    Based on the discussion above, we can conclude that 
    $$\sigma(\delta \cdot \delta' \cdot \delta'') = \pm \sigma(\delta \cdot \delta') \cdot \sigma(\delta'') = \sigma(\delta) \cdot \sigma(\delta') \cdot \sigma(\delta'')$$
    as the sign introduced by each product depends only on $\sigma$.
\end{proof}

\begin{lemma}\label{sigmarelation}
    For $\delta \in \{1,-1\}^m$ and $\sigma_1,\sigma_2,\sigma_3,\sigma \in \Dim$,
    $$\delta \cdot \sigma_1(\delta) \cdot \sigma_2(\delta) = \sigma_3(\delta)$$
    if any only if 
    $$\sigma(\delta) \cdot \left(\sigma\sigma_1\sigma^{-1}\right)\sigma(\delta) \cdot \left(\sigma\sigma_1\sigma^{-1}\right)\sigma(\delta) = \left(\sigma\sigma_1\sigma^{-1}\right)\sigma(\delta).$$
\end{lemma}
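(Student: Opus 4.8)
The plan is to obtain the second equation from the first simply by applying the group element $\sigma$ to both sides and re-expressing everything relative to the new base point $\sigma(\delta)$. The first observation is that the dihedral action of $\Dim$ on $\{1,-1\}^m$ is a genuine (left) group action, so each $\sigma \in \Dim$ acts as a bijection of $\{1,-1\}^m$; in particular two elements of $\{1,-1\}^m$ are equal if and only if their images under $\sigma$ agree. Hence it suffices to show that applying $\sigma$ to both sides of $\delta \cdot \sigma_1(\delta) \cdot \sigma_2(\delta) = \sigma_3(\delta)$ produces exactly the claimed identity (and, for the converse, to run the same computation with $\sigma^{-1}$).

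The computation is then immediate from \cref{sigmamult}. That lemma gives
\[
\sigma\left(\delta \cdot \sigma_1(\delta) \cdot \sigma_2(\delta)\right) = \sigma(\delta) \cdot \sigma\left(\sigma_1(\delta)\right) \cdot \sigma\left(\sigma_2(\delta)\right),
\]
and since the action is a group action, $\sigma(\sigma_i(\delta)) = (\sigma\sigma_i)(\delta) = (\sigma\sigma_i\sigma^{-1})\left(\sigma(\delta)\right)$ for $i = 1,2$, while likewise $\sigma(\sigma_3(\delta)) = (\sigma\sigma_3\sigma^{-1})\left(\sigma(\delta)\right)$. Substituting these in turns $\sigma$ applied to the first equation into
\[
\sigma(\delta) \cdot (\sigma\sigma_1\sigma^{-1})\left(\sigma(\delta)\right) \cdot (\sigma\sigma_2\sigma^{-1})\left(\sigma(\delta)\right) = (\sigma\sigma_3\sigma^{-1})\left(\sigma(\delta)\right),
\]
which is the desired conclusion. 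The converse direction follows by applying $\sigma^{-1}$ and repeating the argument (equivalently, by symmetry: the second equation is of the same form as the first, with $\delta$ replaced by $\sigma(\delta)$ and each $\sigma_j$ replaced by its conjugate).

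There is no genuinely hard step here; the only point requiring care is that one must invoke \cref{sigmamult} rather than naive distributivity, since (as the paper emphasizes) binary products in $\{1,-1\}^m$ are not preserved by the reflection $f$ — it is precisely the triple product that is equivariant. One should also track the conjugation carefully when rewriting $\sigma(\sigma_i(\delta))$ with respect to the new base point. Finally, I would note that the statement as displayed contains an evident typo: the right-hand side should read $\sigma(\delta) \cdot (\sigma\sigma_1\sigma^{-1})\sigma(\delta) \cdot (\sigma\sigma_2\sigma^{-1})\sigma(\delta) = (\sigma\sigma_3\sigma^{-1})\sigma(\delta)$, i.e. each of $\sigma_1, \sigma_2, \sigma_3$ is conjugated by $\sigma$, matching the computation above.
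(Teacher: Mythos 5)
Your proof is correct and follows essentially the same route as the paper's: apply $\sigma$ to both sides, distribute over the triple product via \cref{sigmamult}, and insert $\sigma^{-1}\sigma$ to rewrite each term relative to the new base point $\sigma(\delta)$, with reversibility giving the converse. Your observation that the displayed statement contains a typo (each of $\sigma_1,\sigma_2,\sigma_3$ should be conjugated, not $\sigma_1$ three times) is also correct, as confirmed by the equation the paper's own proof actually derives.
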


\begin{proof}
    Applying $\sigma$ to the first equation gives us 
    $$\sigma\left(\delta \cdot \sigma_1(\delta) \cdot \sigma_2(\delta)\right) = \sigma\sigma_3(\delta),$$
    and using \cref{sigmamult} shows that
    $$\sigma(\delta) \cdot \sigma\sigma_1(\delta) \cdot \sigma\sigma_2(\delta) = \sigma\sigma_3(\delta).$$
    Applying the identity in the form $\sigma^{-1}\sigma$ to all but the leftmost copy of $\delta$ turns this equation into
    $$\sigma(\delta) \cdot \sigma\sigma_1\sigma^{-1}\sigma(\delta) \cdot \sigma\sigma_2\sigma^{-1}\sigma(\delta) = \sigma\sigma_3\sigma^{-1}\sigma(\delta)$$
    as desired, and each of these steps is reversible.
\end{proof}

This lemma immediately proves the following, as all of the information determining reversal-closure for $\delta$ translates to that for $\sigma(\delta)$ under conjugation by $\sigma$ in $\Dim$.

\begin{prop}\label{reversalconjugate}
    $\Gamma$ is a reversal-closed subset for $\delta$ if and only if $\sigma\Gamma\sigma^{-1} = \{\sigma\tau\sigma^{-1} | \tau \in \Gamma\}$ is a reversal-closed subset for $\sigma(\delta)$.
\end{prop}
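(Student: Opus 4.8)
The plan is to deduce this directly from \cref{sigmarelation}, which already encapsulates the translation between the defining equations for $\delta$ and those for $\sigma(\delta)$ under conjugation by $\sigma$. Since reversal-closure is stated purely in terms of such equations, the proposition should fall out with essentially no new computation.

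First I would record the trivial but essential fact that conjugation by $\sigma$ is a bijection of $\Dim$ with itself, hence restricts to a bijection $c_\sigma \colon \Gamma \to \sigma\Gamma\sigma^{-1}$ sending $\tau$ to $\sigma\tau\sigma^{-1}$. In particular every element of $\sigma\Gamma\sigma^{-1}$ is uniquely of the form $\sigma\sigma_i\sigma^{-1}$ for some $\sigma_i \in \Gamma$, and $\sigma^{-1}\bigl(\sigma\Gamma\sigma^{-1}\bigr)\sigma = \Gamma$.

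Next, assuming $\Gamma$ is reversal-closed for $\delta$, I would take arbitrary $\tau_1 = \sigma\sigma_1\sigma^{-1}$ and $\tau_2 = \sigma\sigma_2\sigma^{-1}$ in $\sigma\Gamma\sigma^{-1}$. Reversal-closure supplies $\sigma_3 \in \Gamma$ with $\delta \cdot \sigma_1(\delta)\cdot\sigma_2(\delta) = \sigma_3(\delta)$, and \cref{sigmarelation} converts this into $\sigma(\delta)\cdot\tau_1(\sigma(\delta))\cdot\tau_2(\sigma(\delta)) = \tau_3(\sigma(\delta))$, where $\tau_3 = \sigma\sigma_3\sigma^{-1} \in \sigma\Gamma\sigma^{-1}$. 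Since $\tau_1,\tau_2$ were arbitrary, this is precisely the statement that $\sigma\Gamma\sigma^{-1}$ is reversal-closed for $\sigma(\delta)$.

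For the converse I would apply the implication just established with $\sigma\Gamma\sigma^{-1}$, $\sigma(\delta)$, and $\sigma^{-1}$ in the roles of $\Gamma$, $\delta$, and $\sigma$; then $\sigma^{-1}\bigl(\sigma\Gamma\sigma^{-1}\bigr)\sigma = \Gamma$ is reversal-closed for $\sigma^{-1}(\sigma(\delta)) = \delta$. Equivalently, one may observe that every step in the forward argument is a genuine equivalence, as \cref{sigmarelation} is an ``if and only if'' and $c_\sigma$ is a bijection. The argument is entirely bookkeeping; the only thing requiring care is matching the quantifier pattern ``for all $\sigma_1,\sigma_2$ there exists $\sigma_3$'' across the bijection $c_\sigma$, and confirming that the conjugated witness lands in $\sigma\Gamma\sigma^{-1}$ rather than merely in $\Dim$ — which is exactly what \cref{sigmarelation} delivers.
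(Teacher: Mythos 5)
Your proof is correct and follows the same route as the paper, which simply observes that the proposition is immediate from \cref{sigmarelation} since conjugation by $\sigma$ transports all the defining equations for reversal-closure of $\Gamma$ at $\delta$ to those for $\sigma\Gamma\sigma^{-1}$ at $\sigma(\delta)$. Your version just makes the quantifier bookkeeping and the converse (via applying the forward direction to $\sigma^{-1}$) explicit.
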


In particular, this shows that $\delta$ and $\sigma(\delta)$ have the same number and sizes of maximal reversal-closed subsets. In the examples that follow, we can therefore restrict our attention to a single representative of each orbit in $\{1,-1\}^m$ under $\Dim$.

\begin{ex}\label{trianglesubset}
    When $m=3$, those orbits are represented by $(1,1,1)$ (corresponding to the cyclic triangle) and $(1,-1,1)$ (corresponding to the standard directed triangle). For the cyclic triangle, $\mathsf{D}_3$ itself is reversal-closed, but for the standard triangle the maximal reversal-closed subsets are $\{e,f,fr,rr\}$ and $\{e,f,r,frr\}$: this follows from the equations
    $$f(1,-1,1) \cdot fr(1,-1,1) \cdot rr(1,-1,1) = (-1,1,-1) \cdot (-1,-1,1) \cdot (1,1,-1) = (1,-1,1)$$
    and
    $$f(1,-1,1) \cdot r(1,-1,1) \cdot frr(1,-1,1) = (-1,1,-1) \cdot (-1,1,1) \cdot (1,-1,-1) = (1,-1,1),$$
    and the inability of either $fr$ or $rr$ to be in a reversal-closed subset for $(1,-1,1)$ with either $r$ or $frr$.
\end{ex}

\begin{ex}\label{squaresubset}
    When $m=4$, the orbits are represented by $(1,1,1,1)$, $(1,1,1,-1)$, $(1,1,-1,-1)$, and $(1,-1,1,-1)$, and in each case $\mathsf{D}_4$ is reversal-closed. For instance, when $\delta = (1,1,1,-1)$ this is witnessed by the 7 equations below.
    $$f(\delta) \cdot r(\delta) \cdot rr(\delta) = \delta \qquad
    f(\delta) \cdot fr(\delta) \cdot frr(\delta) = \delta \qquad
    f(\delta) \cdot rrr(\delta) \cdot frrr(\delta) = \delta$$
    $$r(\delta) \cdot fr(\delta) \cdot rrr(\delta) = \delta \qquad 
    r(\delta) \cdot frr(\delta) \cdot frrr(\delta) = \delta$$ 
    $$fr(\delta) \cdot rr(\delta) \cdot frrr(\delta) = \delta \qquad
    rr(\delta) \cdot frr(\delta) \cdot rrr(\delta) = \delta$$
\end{ex}

We can also determine additional reversal-closed subsets based on the symmetry of $\delta \in \{1,-1\}^m$.

\begin{cor}\label{reversalstabilizer}
    If $\Gamma$ is reversal-closed for $\delta \in \{1,-1\}^m$, and $\sigma(\delta) = \delta$, then $\Gamma\sigma$ and $\sigma\Gamma$ are reversal-closed for $\delta$.
\end{cor}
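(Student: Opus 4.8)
The plan is to unwind the definition and reduce both statements to the reversal-closedness of $\Gamma$ itself, leaning on the hypothesis $\sigma(\delta) = \delta$ together with \cref{sigmamult}. Recall that $\Gamma$ being reversal-closed for $\delta$ means that for all $\sigma_1,\sigma_2 \in \Gamma$ there is $\sigma_3 \in \Gamma$ with $\delta \cdot \sigma_1(\delta) \cdot \sigma_2(\delta) = \sigma_3(\delta)$, and I will use the left $\Dim$-action convention $(gh)(\delta) = g(h(\delta))$ throughout.

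First I would treat $\Gamma\sigma$. An arbitrary pair of elements of $\Gamma\sigma$ has the form $\sigma_1\sigma,\sigma_2\sigma$ with $\sigma_1,\sigma_2 \in \Gamma$, and since $(\sigma_i\sigma)(\delta) = \sigma_i(\sigma(\delta)) = \sigma_i(\delta)$, the identity we must produce, $\delta \cdot (\sigma_1\sigma)(\delta) \cdot (\sigma_2\sigma)(\delta) = (\sigma_3\sigma)(\delta)$, collapses to $\delta \cdot \sigma_1(\delta) \cdot \sigma_2(\delta) = \sigma_3(\delta)$ after also noting $(\sigma_3\sigma)(\delta) = \sigma_3(\delta)$. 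Taking $\sigma_3 \in \Gamma$ from reversal-closedness of $\Gamma$ finishes this side, which needs nothing beyond the stabilizer hypothesis. For $\sigma\Gamma$, a pair has the form $\sigma\sigma_1,\sigma\sigma_2$ with $\sigma_1,\sigma_2 \in \Gamma$; I would pick $\sigma_3 \in \Gamma$ with $\delta \cdot \sigma_1(\delta) \cdot \sigma_2(\delta) = \sigma_3(\delta)$, apply $\sigma$ to both sides, and invoke \cref{sigmamult} to distribute $\sigma$ across the ternary product on the left, yielding $\sigma(\delta) \cdot (\sigma\sigma_1)(\delta) \cdot (\sigma\sigma_2)(\delta) = (\sigma\sigma_3)(\delta)$. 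Using $\sigma(\delta) = \delta$ turns the left-hand side into $\delta \cdot (\sigma\sigma_1)(\delta) \cdot (\sigma\sigma_2)(\delta)$, which is precisely the reversal-closed identity for the pair $\sigma\sigma_1,\sigma\sigma_2 \in \sigma\Gamma$ witnessed by $\sigma\sigma_3 \in \sigma\Gamma$.

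The single subtle point — and the reason \cref{sigmamult} is essential rather than cosmetic — is that the $\Dim$-action does not respect binary multiplication on $\{1,-1\}^m$, so one cannot push $\sigma$ through $\sigma_1(\delta)\cdot\sigma_2(\delta)$ directly; it is necessary to apply $\sigma$ to the whole ternary product $\delta \cdot \sigma_1(\delta) \cdot \sigma_2(\delta)$, where the sign discrepancy a reflection would introduce cancels. Apart from that, every step is a direct substitution, so I do not expect any further obstacle.
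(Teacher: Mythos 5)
Your proof is correct, and the first half (the $\Gamma\sigma$ case) is identical to the paper's: both observe that $\sigma_i\sigma(\delta) = \sigma_i(\delta)$ under the stabilizer hypothesis, so the defining equations for $\Gamma\sigma$ collapse to those for $\Gamma$. For the $\sigma\Gamma$ case you diverge slightly: you apply $\sigma$ directly to the witnessing equation $\delta \cdot \sigma_1(\delta) \cdot \sigma_2(\delta) = \sigma_3(\delta)$ and invoke \cref{sigmamult} to distribute $\sigma$ over the ternary product, whereas the paper instead writes $\sigma\Gamma = \left(\sigma\Gamma\sigma^{-1}\right)\sigma$ and cites \cref{reversalconjugate} (conjugation invariance) followed by the already-proved $\Gamma\sigma$ case. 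The two routes are substantively equivalent --- \cref{reversalconjugate} is itself proved from \cref{sigmarelation}, which rests on \cref{sigmamult} --- but yours is the more self-contained calculation, while the paper's is shorter on the page because it reuses established machinery. Your remark that \cref{sigmamult} is essential precisely because the $\Dim$-action fails to respect binary products (the reflection $f$ introduces a sign that only cancels in products of an odd number of factors) is exactly the right observation and is the same point the paper makes in the discussion preceding \cref{sigmamult}.
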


\begin{proof}
    That $\Gamma\sigma$ is reversal closed for $\delta$ follows from the fact that for $\sigma_1,\sigma_2,\sigma_3 \in \Gamma$, 
    $$\sigma_1\sigma(\delta) \cdot \sigma_2\sigma(\delta) \cdot \sigma_2\sigma(\delta) = \sigma_1(\delta) \cdot \sigma_2(\delta) \cdot \sigma_3(\delta) = \delta.$$
    Then by \cref{reversalconjugate}, $\sigma\Gamma = \sigma\Gamma\sigma^{-1}\sigma$ is also reversal-closed.
\end{proof}

\begin{ex}
    For $\delta = (-1,-1,1,-1,1,1)$, $f(\delta) = \delta$. We can therefore conclude that as $\Gamma = \{e,f,frr,rrrr\}$ is a reversal-closed subset of $\mathsf{D}_6$ for $\delta$, so is $f\Gamma = \{e,f,rr,frrrr\}$. In this case $\Gamma f = \Gamma$ as $frrf=rrrr$.
\end{ex}

For $\delta \in \{1,-1\}^m$, there are two ways of constructing an element of $\{1,-1\}^{km}$ for $k > 1$: we write $k(\delta)$ for the sequence which repeats $\delta$ $k$ times, and $\delta(k)$ for the sequence in which each number in $\delta$ is repeated $k$ times in place. We can also map between the groups $\Dim$ and $\mathsf{D}_{km}$ via the homomorphisms
$$\pi \colon \mathsf{D}_{km} \to \Dim \colon \quad f \mapsto f, \quad r \mapsto r, 
\qquad \textrm{and} \qquad
\iota \colon \Dim \to \mathsf{D}_{km} \colon \quad f \mapsto f, \quad r \mapsto r^k.$$

\begin{prop}
    If $\Gamma \subseteq \Dim$ is a reversal closed subset for $\delta \in \{1,-1\}^m$, then $\pi^{-1}(\Gamma) \subseteq \mathsf{D}_{km}$ is reversal closed for $k(\delta) \in \{1,-1\}^{km}$ and every maximal reversal-closed subset for $k(\delta)$ is of this form.
\end{prop}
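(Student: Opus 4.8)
The plan is to reduce everything to one structural fact about the block-repetition map $k \colon \{1,-1\}^m \to \{1,-1\}^{km}$: it is injective, it is a homomorphism for componentwise multiplication (so $k(x)\cdot k(y) = k(x\cdot y)$), and it intertwines the dihedral actions along $\pi$, meaning $\sigma(k(x)) = k(\pi(\sigma)(x))$ for every $\sigma \in \mathsf{D}_{km}$ and $x \in \{1,-1\}^m$. First I would verify the intertwining identity on the generators $r$ and $f$ of $\mathsf{D}_{km}$: a cyclic shift of the $m$-periodic sequence $k(x)$ is again $m$-periodic with block $r(x)$, and the reverse-and-negate reflection of $k(x)$ is $m$-periodic with block $f(x)$; the general case then follows because acting is a group action and $\pi$ is a homomorphism. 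Injectivity is immediate since the first $m$ coordinates of $k(x)$ recover $x$, and multiplicativity is obvious.

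Granting this, the first claim is a direct computation. For $\tilde\sigma_1,\tilde\sigma_2 \in \pi^{-1}(\Gamma)$, intertwining and multiplicativity give $k(\delta)\cdot\tilde\sigma_1(k(\delta))\cdot\tilde\sigma_2(k(\delta)) = k\bigl(\delta\cdot\pi(\tilde\sigma_1)(\delta)\cdot\pi(\tilde\sigma_2)(\delta)\bigr)$. Since $\pi(\tilde\sigma_1),\pi(\tilde\sigma_2) \in \Gamma$ and $\Gamma$ is reversal-closed for $\delta$, there is $\sigma_3 \in \Gamma$ with $\delta\cdot\pi(\tilde\sigma_1)(\delta)\cdot\pi(\tilde\sigma_2)(\delta) = \sigma_3(\delta)$, so the quantity above equals $k(\sigma_3(\delta)) = \tilde\sigma_3(k(\delta))$ for any $\tilde\sigma_3 \in \pi^{-1}(\sigma_3) \subseteq \pi^{-1}(\Gamma)$; hence $\pi^{-1}(\Gamma)$ is reversal-closed for $k(\delta)$.

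For the statement about maximal subsets I would first record the reverse passage: if $\tilde\Gamma \subseteq \mathsf{D}_{km}$ is \emph{any} reversal-closed subset for $k(\delta)$, then $\pi(\tilde\Gamma)$ is reversal-closed for $\delta$. Indeed, given $\sigma_1,\sigma_2 \in \pi(\tilde\Gamma)$, lift them to $\tilde\sigma_1,\tilde\sigma_2 \in \tilde\Gamma$, obtain $\tilde\sigma_3 \in \tilde\Gamma$ from reversal-closure upstairs, rewrite that equation via the intertwining identity as $k\bigl(\delta\cdot\sigma_1(\delta)\cdot\sigma_2(\delta)\bigr) = k\bigl(\pi(\tilde\sigma_3)(\delta)\bigr)$, and cancel $k$ by injectivity; then $\pi(\tilde\sigma_3) \in \pi(\tilde\Gamma)$ is the required witness. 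Now if $\tilde\Gamma$ is a \emph{maximal} reversal-closed subset for $k(\delta)$, set $\Gamma := \pi(\tilde\Gamma)$, reversal-closed for $\delta$ by the previous sentence. By the first part, $\pi^{-1}(\Gamma)$ is reversal-closed for $k(\delta)$, and $\tilde\Gamma \subseteq \pi^{-1}(\pi(\tilde\Gamma)) = \pi^{-1}(\Gamma)$, so maximality forces $\tilde\Gamma = \pi^{-1}(\Gamma)$, which is exactly the asserted form (and $\Gamma$ is then necessarily maximal for $\delta$, since a strictly larger reversal-closed $\Gamma'$ would make $\pi^{-1}(\Gamma') \supsetneq \tilde\Gamma$ reversal-closed).

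The only point requiring genuine care — and the step I expect to be the main obstacle — is establishing the intertwining identity $\sigma(k(x)) = k(\pi(\sigma)(x))$ under the paper's sign convention for $f$ (reverse, then negate every coordinate): the uniform negation is what makes $f$ compatible with $m$-periodicity, but this is cleanest to see by a coordinate-level check using that $km \equiv 0 \pmod m$, so that index arithmetic modulo $km$ descends to index arithmetic modulo $m$. Everything after that — including the fact that $\pi^{-1}$ of a subset is a union of cosets of $\ker\pi = \langle r^m\rangle$ — is formal bookkeeping.
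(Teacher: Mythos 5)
Your proposal is correct and follows essentially the same route as the paper: verify that $k(-)$ is an injective group homomorphism intertwining the two dihedral actions along $\pi$ (checked on the generators $r,f$), deduce the first claim by transporting the defining equations, and for maximality push $\tilde\Gamma$ forward to $\pi(\tilde\Gamma)$, pull back, and invoke $\tilde\Gamma \subseteq \pi^{-1}\pi(\tilde\Gamma)$. Your version is slightly more explicit than the paper's about where injectivity of $k(-)$ is used to cancel and recover the equations downstairs, but the structure of the argument is identical.
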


\begin{proof}
    The homomorphism $\pi$ induces an action of $\mathsf{D}_{km}$ on $\{1,-1\}^m$, and we can check that the function $k(-) \colon \{1,-1\}^m \to \{1,-1\}^{km}$ is equivariant using the generators $r,f \in \mathsf{D}_{km}$:
    $$r(k(\delta)) = r(\delta\delta\cdots\delta) = r(\delta)r(\delta)\cdots r(\delta) \qquad\textrm{and}\qquad f(k(\delta)) = f(\delta\delta\cdots\delta) = f(\delta)f(\delta)\cdots f(\delta)$$
    follow from the iterated repetition of $\delta$. As $k(-)$ is also a homomorphism of abelian groups, it preserves the type of equations that make a subset reversal-closed. Hence if
    $$\sigma_1(\delta) \cdot \sigma_2(\delta) \cdot \sigma_3(\delta) = \delta$$
    in $\{1,-1\}^m$ for $\sigma_1,\sigma_2,\sigma_3 \in \Dim$, then 
    $$\sigma'_1(\delta) \cdot \sigma'_2(\delta) \cdot \sigma'_3(\delta) = \delta$$
    whenever $\pi(\sigma'_i) = \sigma_i$ (i=1,2,3) and moreover 
    $$\sigma'_1(\delta) \cdot \sigma'_2(\delta) \cdot \sigma'_3(\delta) = \delta$$
    in $\{1,-1\}^{km}$, which suffices to show that $\pi^{-1}(\Gamma)$ is reversal-closed.

    That every maximal reversal-closed subset for $k(\delta)$ is of this form follows from the fact that in addition to being $\mathsf{D}_{km}$-equivariant and preserving group structure, the function $k(-) \colon \{1,-1\}^m \to \{1,-1\}^{km}$ is injective: if $\Gamma'$ is reversal closed for $k(\delta)$, the equations that demonstrate reversal-closure must hold for the $\mathsf{D}_{km}$-action on $\{1,-1\}^m$. This means that the same equations hold for the $\Dim$-action on the same set, where each element of $\Gamma'$ is replaced by its image under $\pi$ in $\Dim$, so $\pi(\Gamma')$ is reversal-closed for $\delta$ and hence $\pi^{-1}\pi(\Gamma')$ is reversal-closed for $k(\delta)$. As $\Gamma' \subseteq \pi^{-1}\pi(\Gamma')$, if $\Gamma'$ is maximal then $\Gamma' = \pi^{-1}\pi(\Gamma')$.
\end{proof}

\begin{ex}
    From \cref{trianglesubset} we have that $\{e,f,fr,rr\}$ and $\{e,f,r,frr\}$ are reversal-closed subsets for $(1,-1,1)$. As the homomorphism $\pi \colon \mathsf{D}_6 \to \mathsf{D}_3$ has kernel $\{e,rrr\}$, this means that for $2(1,-1,1) = (1,-1,1,1,-1,1)$ the subsets $\{e,f,fr,rr,rrr,frrr,frrrr,rrrrr\}$ and $\{e,f,r,frr,rrr,frrr,rrrr,frrrrr\}$ are reversal-closed and in fact the only maximal such subsets.
\end{ex}

\begin{ex}
    From \cref{squaresubset}, we can conclude that for any element of $\{1,-1\}^{4k}$ invariant under the rotation $r^4 \in \mathsf{D}_{4k}$, the entire $\mathsf{D}_{4k}$ is reversal-closed. This includes all tiles of the form $k(\delta)$ for $\delta \in \{1,-1\}^4$, such as $(-1,1,1,1,-1,1,1,1)$ and $(1,-1,-1,1,1,-1,-1,1,1,-1,-1,1)$.
\end{ex}

We can similarly describe reversal-closed subsets for $\delta(k)$.

\begin{prop}
    If $\Gamma$ is a reversal-closed subset for $\delta \in \{1,-1\}^m$, then $\iota(\Gamma)$ is reversal-closed for $\delta(k)$.
\end{prop}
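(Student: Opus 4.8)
The plan is to mirror the proof of the preceding proposition about $k(\delta)$, with the ``repeat'' operation $k(-)$ replaced by the ``stretch'' operation $(-)(k)$ and the surjection $\pi$ replaced by the injection $\iota$. First I would record that $(-)(k) \colon \{1,-1\}^m \to \{1,-1\}^{km}$ is a homomorphism of abelian groups under componentwise multiplication: repeating each coordinate $k$ times in place manifestly commutes with coordinatewise multiplication, so $(\delta\cdot\delta')(k) = \delta(k)\cdot\delta'(k)$. (It is also injective, though that is not needed here.) I would also note in passing that $\iota$ is a well-defined group homomorphism, since $(r^k)^m = r^{km} = e$, $f^2 = e$, and $(r^k f)^2 = e$ as $r^k f$ is a reflection in $\mathsf{D}_{km}$.

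The key step is to check that $(-)(k)$ is equivariant for the action of $\Dim$ on the source and the action of $\mathsf{D}_{km}$ on the target pulled back along $\iota$; that is,
\[
(\sigma\delta)(k) = \iota(\sigma)\bigl(\delta(k)\bigr) \qquad \text{for all } \sigma \in \Dim,\ \delta \in \{1,-1\}^m.
\]
It suffices to verify this on the generators $r,f$ of $\Dim$. For $r$: the tuple $\delta(k)$ consists of $m$ consecutive blocks of $k$ equal entries, $(r\delta)(k)$ cyclically permutes those blocks by one, and this is exactly the cyclic shift of $\delta(k)$ by $k$ positions, i.e.\ $r^k(\delta(k)) = \iota(r)(\delta(k))$. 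For $f$: both $(f\delta)(k)$ and $f(\delta(k))$ negate every entry and reverse the order of the blocks (reversing a block-constant tuple with block length $k$ is the same as reversing the list of its blocks), so $(f\delta)(k) = f(\delta(k)) = \iota(f)(\delta(k))$.

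With multiplicativity and equivariance in hand, reversal-closure transfers formally. Given $\sigma_1',\sigma_2' \in \iota(\Gamma)$, write $\sigma_i' = \iota(\sigma_i)$ with $\sigma_i \in \Gamma$; since $\Gamma$ is reversal-closed for $\delta$, choose $\sigma_3 \in \Gamma$ with $\delta \cdot \sigma_1(\delta) \cdot \sigma_2(\delta) = \sigma_3(\delta)$. Applying the homomorphism $(-)(k)$ to this identity and then using equivariance on each of the three factors gives
\[
\delta(k)\cdot \sigma_1'(\delta(k))\cdot \sigma_2'(\delta(k)) = \sigma_3'(\delta(k)), \qquad \sigma_3' = \iota(\sigma_3) \in \iota(\Gamma),
\]
which is exactly the reversal-closure condition for $\iota(\Gamma)$ and $\delta(k)$. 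I do not expect a genuine obstacle: the only point that needs care is the bookkeeping in the equivariance check for $r$, namely that ``stretch, then rotate by one block'' coincides with ``rotate by $k$ positions.'' Note also that, unlike the $k(\delta)$ case, there is no accompanying claim that every maximal reversal-closed subset for $\delta(k)$ arises as $\iota(\Gamma)$ — and indeed that would fail, since $\iota$ is far from surjective — so the argument stops once $\iota(\Gamma)$ is shown to be reversal-closed.
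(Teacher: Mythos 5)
Your argument is correct and is essentially the paper's proof: both rest on the two facts that $(-)(k)$ is a group homomorphism and that $\iota(\sigma)(\delta(k)) = \sigma(\delta)(k)$, and then transfer the reversal-closure equations formally (the paper states the equivariance as "straightforward to check" where you verify it on the generators $r,f$, and it uses the equivalent three-factor form $\sigma_1(\delta)\cdot\sigma_2(\delta)\cdot\sigma_3(\delta)=\delta$, but these are cosmetic differences). Your closing remark that the converse fails, unlike in the $k(\delta)$ case, matches the paper's own note following the proposition.
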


Note that unlike the previous proposition, here the converse is not true and there may be maximal reversal-closed subsets for $\delta(k)$ not arising from those for $\delta$.

\begin{proof}
    For $\sigma \in \Dim$, it is straightforward to check that $\iota(\sigma)(\delta(k)) = \sigma(\delta)(k)$. As the function $-(k) \colon \{1,-1\}^m \to \{1,-1\}^{km}$ is additionally a group homomorphism, we have that whenever $\sigma_1(\delta) \cdot \sigma_2(\delta) \cdot \sigma_3(\delta) = \delta$, we can conclude that
    $$\iota(\sigma_1)(\delta(k)) \cdot \iota(\sigma_2)(\delta(k)) \cdot \iota(\sigma_3)(\delta(k)) = 
    \sigma_1(\delta)(k) \cdot \sigma_2(\delta)(k) \cdot \sigma_3(\delta)(k) = \delta(k).$$
    This means that the equations ensuring that $\iota(\Gamma)$ is reversal closed for $\delta(k)$ follow from those exhibiting $\Gamma$ as reversal-closed for $\delta$.
\end{proof}

\begin{ex}
    Among the many reversal-closed subsets this result allows us to construct, using \cref{squaresubset} we find that for any element $\delta(k) \in \{1,-1\}^{4k}$ for some $\delta \in \{1,-1\}^4$, $\iota(\mathsf{D}_4)$ is reversal-closed. We see this for instance for $(-1,-1,1,1,1,1,1,1)$, where $$\iota(\mathsf{D}_4) = \{e,f,rr,frr,rrrr,frrrr,rrrrrr,frrrrrr\} \subset \mathsf{D}_8$$
    is reversal-closed while all of the other maximal reversal-closed subsets have only 4 elements, at least one of which has an odd number of $r$'s:
    $\{e,frr,rrr,frrrrrrr\}$, $\{e,r,fr,frr\}$,
    $\{e,frr,frrr,rrrrrrr\}$, and $\{e,frr,rrrrr,frrrrr\}$.
\end{ex}

\end{document}